\documentclass[reqno,12pt]{amsart}
\usepackage{amscd,amsfonts,amssymb}
\textwidth=14.8cm
\textheight=24.09cm
\topmargin=0.0cm
\oddsidemargin=1.0cm
\evensidemargin=1.0cm
\numberwithin{equation}{section}
\newtheorem{theorem}{Theorem}[section]
\newtheorem{lemma}{Lemma}[section]
\newtheorem{corollary}{Corollary}[section]

\theoremstyle{definition}

\theoremstyle{remark}
\newtheorem{remark}{Remark}[section]
\newtheorem{example}{Example}[section]

\newcommand{\essinf}{\mathop{\rm ess \, inf}\limits}
\newcommand{\esssup}{\mathop{\rm ess \, sup}\limits}
\newcommand{\mes}{\mathop{\rm mes}\nolimits}
\newcommand{\diver}{\mathop{\rm div}\nolimits}
\author{Andrej A. Kon'kov}
\address{Department of Differential Equations,
Faculty of Mechanics and Mathematics,
Mo\-s\-cow Lo\-mo\-no\-sov State University,
Vorobyovy Gory,
119992 Moscow, Russia}
\email{konkov@mech.math.msu.su}
\title[]{On properties of solutions of quasilinear second-order elliptic inequalities}
\thanks{The research was supported by RFBR, grant 11-01-12018-ofi-m-2011.}
\keywords{Nonlinear operators, Elliptic inequalities,  Unbounded domains}
\subjclass{35J15, 35J60, 35J61, 35J62, 35J92}
\date{}

\begin{document}

\begin{abstract}

Let $\Omega$ be an unbounded open subset of ${\mathbb R}^n$, $n \ge 2$,
and $A : \Omega \times {\mathbb R}^n \to {\mathbb R}^n$ 
be a function such that
$$
	C_1
	|\zeta|^p
	\le
	\zeta
	A (x, \zeta),
	\quad
	|A (x, \zeta)|
	\le
	C_2
	|\zeta|^{p-1}
$$
with some constants
$C_1 > 0$,
$C_2 > 0$,
and
$p > 1$
for almost all
$x \in \Omega$
and for all
$\zeta \in {\mathbb R}^n$.
We obtain blow-up conditions and priori estimates for inequalities of the form
$$
	\diver A (x, D u)
	+
	b (x) |D u|^\alpha
	\ge
	q (x) g (u)
	\quad
	\mbox{in } \Omega,
$$
where $p - 1 \le \alpha \le p$ is a real number and, moreover,
$b \in L_{\infty, loc} (\Omega)$,
$q \in L_{\infty, loc} (\Omega)$,
and
$g \in C ([0, \infty))$
are non-negative functions.

\end{abstract}

\maketitle

\section{Introduction}\label{intro}

Let $\Omega$ be an unbounded open subset of ${\mathbb R}^n$, $n \ge 2$,
and $A : \Omega \times {\mathbb R}^n \to {\mathbb R}^n$ 
be a function such that
$$
	C_1
	|\zeta|^p
	\le
	\zeta
	A (x, \zeta),
	\quad
	|A (x, \zeta)|
	\le
	C_2
	|\zeta|^{p-1}
$$
with some constants
$C_1 > 0$,
$C_2 > 0$,
and
$p > 1$
for almost all
$x \in \Omega$
and for all
$\zeta \in {\mathbb R}^n$.

Denote
$
	\Omega_{r_1, r_2}
	=
	\{
		x \in \Omega : r_1 < |x| < r_2
	\}
$
and
$
	B_{r_1, r_2}
	=
	\{
		x \in {\mathbb R}^n : r_1 < |x| < r_2
	\},
$
$0 < r_1 < r_2$.
By $B_r^z$ and $S_r^z$ we mean the open ball and the sphere in ${\mathbb R}^n$ 
of radius $r > 0$ and center at a point $z$.
In the case of $z = 0$, we write $B_r$ and $S_r$
instead of $B_r^0$ and $S_r^0$, respectively.

As in~\cite{LU}, the space ${W_{p, loc}^1 (\Omega)}$ is
the set of measurable functions belonging to ${W_p^1 (B_r \cap \Omega)}$ 
for all real numbers $r > 0$ such that
$B_r \cap \Omega \ne \emptyset$.
The space ${L_{p, loc} (\Omega)}$ is defined analogously.

We consider inequalities of the form
\begin{equation}
	\diver A (x, D u)
	+
	b (x) |D u|^\alpha
	\ge
	q (x) g (u)
	\quad
	\mbox{in } \Omega,
	\label{1.1}
\end{equation}
where 
$D = (\partial / \partial x_1,\ldots,\partial / \partial x_n)$
is the gradient operator,
$p - 1 \le \alpha \le p$ is a real number
and, moreover,
$b \in L_{\infty, loc} (\Omega)$,
$q \in L_{\infty, loc} (\Omega)$,
and
$g \in C ([0, \infty))$
are non-negative functions with $g (t) > 0$ for all $t > 0$.

A non-negative function 
$
	u 
	\in 
	W_{p, loc}^1 (\Omega)
	\cap
	L_{\infty, loc} (\Omega)
$ 
is called a solution of inequality~\eqref{1.1} if 
$A (x, D u) \in L_{p/(p-1), loc} (\Omega)$ 
and
$$
	- \int_\Omega
	A (x, D u)
	D \varphi
	\, dx
	+
	\int_\Omega
	b (x) |D u|^\alpha
	\varphi
	\, dx
	\ge
	\int_{
		\Omega
	}
	q (x) g (u)
	\varphi
	\, dx
$$
for any non-negative function
$
	\varphi 
	\in 
	C_0^\infty (
		\Omega
	).
$
In addition, we say that
\begin{equation}
	\left.
		u
	\right|_{
		\partial \Omega
	}
	=
	0
	\label{1.2}
\end{equation}
if
$
	\psi u
	\in
	{
		\stackrel{\rm \scriptscriptstyle o}{W}\!\!{}_p^1
		(
			\Omega
		)
	}
$
for any
$
	\psi
	\in
	C_0^\infty 
	(
		{\mathbb R}^n
	).
$
Thus, in the case of $\Omega = {\mathbb R}^n$, condition~\eqref{1.2} is valid for all
$
	u
	\in
	W_{p, loc}^1
	(
		{\mathbb R}^n
	).
$

It is obvious that every solution of the equation
$$
	\diver A (x, D u)
	+
	b (x) |D u|^\alpha
	=
	q (x) g (u)
	\quad
	\mbox{in } \Omega
$$
is also a solution of inequality~\eqref{1.1}.
Such equations and inequalities have traditionally attracted the attention of many mathematicians.
They appear in the continuum mechanics, in particular, 
in the theory of non-Newtonian fluids and non-Newtonian filtrations~\cite{Aronsson, Yang}.
Other important examples arise in connection with the equations describing 
electromagnetic fields in spatially dispersive media~\cite{KS}
and the Matukuma and Batt-Faltenbacher-Horst equations 
that appear in the plasma physics~\cite{BFH, Batt}.
In so doing, of special interest is a phenomenon of the absence of non-trivial solutions 
which is known as the blow-up phenomenon.

Our aim is to obtain blow-up conditions and priori estimates
for solutions of problem~\eqref{1.1}, \eqref{1.2}. 
The questions treated below were investigated mainly for 
nonlinearities of the Emden-Fowler type
$g (t) = t^\lambda$~\cite{BVP, F, KL, MP, MPbook, Veron}.
The case of general nonlinearity without lower-order derivatives 
was studied 
in~\cite{FPR2008, GR, Keller, Osserman}.
For inequalities containing lower-order derivatives, 
blow-up conditions were obtained in~\cite{FPR2009}.
However, these results can not be applied to a class of inequalities, 
e.g., to the inequalities discussed in Examples~\ref{e2.1}--\ref{e2.3}.

Also, it should be noted that the authors of~\cite{FPR2008, FPR2009, GR, Keller, Osserman}
use arguments based on the method of barrier functions.
This method involves additional restrictions on the function $A$ 
in the left-hand side of~\eqref{1.1}; 
therefore, it can not be applied to inequalities of the general form~\eqref{1.1}.
For $\alpha = p - 1$, inequalities~\eqref{1.1} were considered in~\cite{MeJMAA, MeNONANA}.
In the present paper, we managed to generalize results of~\cite{MeNONANA} 
to the case of $p - 1 \le \alpha \le p$.

Throughout the paper, it is assumed that $S_r \cap \Omega \ne \emptyset$ for all $r > r_0$,
where $r_0 > 0$ is some real number. 
For every solution of problem~\eqref{1.1}, \eqref{1.2} we denote
$$
	M (r; u)
	=
	\esssup_{
		S_r
		\cap
		\Omega
	}
	u,
	\quad
	r > r_0,
$$
where the restriction of $u$ to
$
	S_r
	\cap
	\Omega
$
is understood in the sense of the trace and
the $\esssup$ on the right-hand side is with respect to
$(n-1)$-dimensional Lebesgue measure on $S_r$.

We also put
$$
	g_\theta (t)
	=
	\inf_{
		(t / \theta, \theta t)
	}
	g,
	\quad
	t > 0,
	\:
	\theta > 1,
$$
$$
	q_\sigma (r)
	=
	\essinf_{
		\Omega_{r / \sigma, \sigma r}
	}
	q,
	\quad
	r > r_0,
	\:
	\sigma > 1
$$
and
\begin{equation}
	f_\sigma (r)
	=
	\frac{
		q_\sigma (r)	
	}{
		1
		+
		r
		\,
		q_\sigma^{
			(\alpha - p + 1) / \alpha
		}
		(r)
		\esssup_{
			\Omega_{r / \sigma, \sigma r}
		}
		b^{
			(p - 1) / \alpha
		}
	},
	\quad
	r > r_0,
	\:
	\sigma > 1.
	\label{1.3}
\end{equation}

\section{Main Results}

\begin{theorem}\label{t2.1}
Let
\begin{equation}
	\int_1^\infty
	(g_\theta (t) t)^{- 1 / p}
	\,
	dt
	<
	\infty,
	\label{t2.1.1}
\end{equation}
\begin{equation}
	\int_1^\infty
	g^{- 1 / \alpha}_\theta (t)
	\,
	dt
	<
	\infty,
	\label{t2.1.2}
\end{equation}
and
\begin{equation}
	\int_{r_0}^\infty
	(r f_\sigma (r))^{1 / (p - 1)}
	\,
	dr
	=
	\infty
	\label{t2.1.3}
\end{equation}
for some real numbers $\theta > 1$ and $\sigma > 1$.
Then any non-negative solution of~\eqref{1.1}, \eqref{1.2}
is equal to zero almost everywhere in $\Omega$.
\end{theorem}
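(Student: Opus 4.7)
\bigskip

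\textbf{Proof proposal.} My plan is to reduce the inequality to a single ordinary differential inequality for the spherical maximum function $M(r;u)$ and then integrate it, using (\ref{t2.1.1})--(\ref{t2.1.2}) to bound one side and (\ref{t2.1.3}) to force a contradiction on the other.

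First I would argue by contradiction: assume $u \not\equiv 0$, so there is some $r_* > r_0$ with $M(r_*;u) > 0$, and I would like to show that $M(r;u) \to \infty$ along a suitable sequence $r \to \infty$ at a controlled rate. To this end I would localise inequality~\eqref{1.1} on each annulus $\Omega_{r/\sigma,\sigma r}$ by testing against non-negative $\varphi \in C_0^\infty$ of the form $\varphi = \eta^p \Psi(u)$, where $\eta$ is a radial cut-off supported in $B_{r/\sigma,\sigma r}$ and $\Psi$ is a non-decreasing function of $u$ (e.g.\ $\Psi(t) = \int_0^t g(\tau)\,d\tau$ or a power of it) chosen to make the right-hand side $q(x)g(u)\varphi$ absorb cleanly. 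The boundary condition~\eqref{1.2} lets me use cut-offs that do not vanish on $\partial \Omega$, so the test function is indeed admissible.

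The central technical step is handling the lower-order term $b(x)|Du|^\alpha \varphi$. Since $p-1 \le \alpha \le p$, I would split $b|Du|^\alpha$ by Young's inequality in two conjugate ways: against $|Du|^p$ on one side and against $q\,g(u)$ on the other. The natural choice of conjugate exponents is $\alpha/(p-1)$ and $\alpha/(\alpha-p+1)$; this produces a term of the form $b^{(p-1)/\alpha}\,q^{-(\alpha-p+1)/\alpha}\,(q g(u))$, which, up to a scaling factor $\varepsilon$, is absorbed into the right-hand side and leaves behind precisely the reciprocal factor appearing in the denominator of $f_\sigma$ in~\eqref{1.3}. Combined with the coercivity bound $C_1 |Du|^p \le Du\cdot A(x,Du)$, this yields a Caccioppoli-type estimate of the form
$$
	\int_{\Omega_{r/\sigma,\sigma r}}
	\bigl(|Du|^p + f_\sigma(r)\, g(u)\bigr)\,\eta^p\,dx
	\le
	C \int_{\Omega_{r/\sigma,\sigma r}} |D\eta|^p\,\Phi(u)\,dx,
$$
for a suitable $\Phi$ dictated by the choice of $\Psi$.

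From this integral estimate I would pass to a pointwise ODE for $M(r;u)$ by the same mechanism as in~\cite{MeNONANA}: on the annulus where $u$ is comparable to $M(r;u)$ we can replace $g(u)$ by $g_\theta(M(r;u))$ and, after a standard trace/local-$L^\infty$ argument, obtain
$$
	\frac{dM}{dr}
	\ge
	c\,\bigl(r\,f_\sigma(r)\,g_\theta(M)\bigr)^{1/p}
	\quad \text{and} \quad
	\frac{dM}{dr}
	\ge
	c\,\bigl(r\,f_\sigma(r)\bigr)^{1/\alpha}\,g_\theta^{1/\alpha}(M)
$$
in an appropriate weak sense, whichever of the two regimes prevails (the first when the principal $p$-part dominates, the second when the gradient term does). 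Separating variables in each regime, integrating, and using (\ref{t2.1.1})--(\ref{t2.1.2}) to bound $\int_{M_0}^{\infty}(g_\theta(t)t)^{-1/p}\,dt$ and $\int_{M_0}^\infty g_\theta^{-1/\alpha}(t)\,dt$ by a finite constant, yields a universal upper bound on $\int_{r_*}^{\infty}(r f_\sigma(r))^{1/(p-1)}\,dr$ (after renormalising the exponent by combining the two regimes). This contradicts~(\ref{t2.1.3}) and forces $u \equiv 0$.

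The main obstacle will be the derivation and bookkeeping of the key differential inequality for $M(r;u)$: balancing the Young splitting so that the exponent combination $(\alpha-p+1)/\alpha$ on $q$ and $(p-1)/\alpha$ on $b$ appears cleanly in $f_\sigma$, and simultaneously reconciling the two Keller--Osserman conditions into a single integrated bound whose companion integral is $\int (r f_\sigma)^{1/(p-1)}\,dr$. The rest is routine once this step is in place.
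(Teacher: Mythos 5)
Your outline captures the right global strategy---force $M(r;u)$ to grow, split into a regime where the principal part controls the increment and a regime where the gradient term does, and play the two Keller--Osserman conditions \eqref{t2.1.1}, \eqref{t2.1.2} against \eqref{t2.1.3}---but the step you defer to ``a standard trace/local-$L^\infty$ argument'' is where essentially all of the work in the paper lives, and your sketched differential inequalities do not have the homogeneity needed to reproduce the hypotheses of the theorem. First, $M(\cdot;u)$ is only non-decreasing (it can jump), so one cannot literally separate variables in $dM/dr \ge \dots$; the paper replaces this by discrete increment estimates (Lemmas~\ref{l3.1} and~\ref{l3.2}) of the form $(M(r_2;u)-M(r_1;u))^{p-1} \ge C (r_2-r_1)^p\, \essinf q \cdot \inf g$, proved not by a Caccioppoli inequality with a test function $\Psi(u)$ but via the maximum principle for the operator with the gradient term (Lemma~\ref{l4.2}, which is itself delicate for $\alpha$ near $p$), Moser iteration (Lemmas~\ref{l4.4}--\ref{l4.5}), a measure-density lemma (Lemma~\ref{l4.6}), and the key growth Lemma~\ref{l4.7}. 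Moreover, the regime splitting is not governed by which term dominates an energy estimate but by the solution-dependent condition \eqref{pl3.3.12} on the increment of $M$ itself, so the two regimes alternate along the radius in an uncontrolled way and must be handled by the adaptive partitions of Lemmas~\ref{l3.3}--\ref{l3.5}.

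Second, your first ODE, $dM/dr \ge c\,(r f_\sigma(r) g_\theta(M))^{1/p}$, integrates to $\int g_\theta^{-1/p}(t)\,dt \ge c\int (rf_\sigma(r))^{1/p}\,dr$, which matches neither the integrand $(g_\theta(t)\,t)^{-1/p}$ of \eqref{t2.1.1} nor the exponent $1/(p-1)$ in \eqref{t2.1.3}; the phrase ``renormalising the exponent by combining the two regimes'' hides two genuinely separate conversions. In the paper these are carried out in Lemma~\ref{l3.5} by superadditivity of $x \mapsto x^{p/(p-1)}$ over a multiplicative partition: one converts $\bigl(\int q_\sigma^{1/p}(\xi)\,d\xi\bigr)^{p/(p-1)}$ into $\int (\xi f_\sigma(\xi))^{1/(p-1)}\,d\xi$, and the other converts $\bigl(\int (g_\theta(t)t)^{-1/p}\,dt\bigr)^{p/(p-1)}$ into $\int g_\theta^{-1/(p-1)}(t)\,dt$; correspondingly there are three alternatives \eqref{pl3.5.1}--\eqref{pl3.5.3}, not two, because the principal-part regime itself bifurcates according to whether $M$ grows fast or slowly across the annulus (Lemma~\ref{l3.3} versus Lemma~\ref{l3.4}). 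Until you supply (i) a quantitative lower bound on the increment of $M$ over an annulus from the local integral estimate, and (ii) the superadditivity bookkeeping that produces the exponents $p/(p-1)$ and $1/(p-1)$, the proposal is a program rather than a proof.
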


Theorem~\ref{t2.1} is proved in Section~\ref{Proof_of_Theorems}. 
Now, we demonstrate its exactness.

\begin{example}\label{e2.1}
Consider the inequality
\begin{equation}
	\diver (|D u|^{p-2} D u)
	+
	b (x) |D u|^\alpha
	\ge
	q (x)
	u^\lambda
	\quad
	\mbox{in } {\mathbb R^n},
	\label{e2.1.1}
\end{equation}
where
$b \in L_{\infty, loc} ({\mathbb R^n})$ 
and
$q \in L_{\infty, loc} ({\mathbb R^n})$ 
are non-negative functions such that
\begin{equation}
	b (x) \le b_0 |x|^k,
	\;
	b_0 = const > 0,
	\label{e2.1.5}
\end{equation}
for almost all $x$ in a neighborhood of infinity and
\begin{equation}
	q (x) \sim |x|^l
	\quad
	\mbox{as } x \to \infty,
	\label{e2.1.2}
\end{equation}
i.e.
$
	k_1 |x|^l \le q (x) \le k_2 |x|^l
$
with some constants $k_1 > 0$ and $k_2 > 0$ for almost all $x$ in a neighborhood of infinity.

At first, let
\begin{equation}
	\alpha + l (\alpha - p + 1) + k (p - 1) \le 0
	\label{e2.1.6}
\end{equation}
(this condition implies that the second summand in the denominator on the right in~\eqref{1.3} 
is bounded above by a constant for all $r > r_0$).
According to Theorem~\ref{t2.1}, if
\begin{equation}
	\lambda > \alpha
	\quad
	\mbox{and}
	\quad
	l \ge - p,
	\label{e2.1.3}
\end{equation}
then any non-negative solution of~\eqref{e2.1.1}
is equal to zero almost everywhere in ${\mathbb R}^n$. 
On the other hand, if $n \ge p$ and, moreover,
$$
	\lambda > \alpha
	\quad
	\mbox{and}
	\quad
	l < - p,
$$
then
$$
	u (x)
	=
	\max 
	\{ 
		|x|^{
			(l + p) / (p - \lambda - 1)
		},
		1 
	\}
$$
is a solution of~\eqref{e2.1.1}, 
where $b \equiv 0$ and $q \in L_{\infty, loc} ({\mathbb R^n})$ 
is a non-negative function satisfying relation~\eqref{e2.1.2}.
This demonstrates the exactness of the second inequality in~\eqref{e2.1.3}.
The first inequality in~\eqref{e2.1.3} is also exact.
Namely, in the case of $\lambda \le \alpha$, it can be shown that~\eqref{e2.1.1}
has a positive solution for all positive functions
$b \in C ({\mathbb R})$ and $q \in C ({\mathbb R^n})$.

Now, let
\begin{equation}
	\alpha + l (\alpha - p + 1) + k (p - 1) > 0.
	\label{e2.1.7}
\end{equation}
If
\begin{equation}
	\lambda > \alpha
	\quad
	\mbox{and}
	\quad
	l \ge k - \alpha,
	\label{e2.1.4}
\end{equation}
then in accordance with Theorem~\ref{t2.1} any non-negative solution of~\eqref{e2.1.1}
is equal to zero almost everywhere in ${\mathbb R}^n$. 
As we have said, the first inequality in~\eqref{e2.1.4} is exact.
The second one is also exact. Really, in the case that
\begin{equation}
	\lambda > \alpha
	\quad
	\mbox{and}
	\quad
	l < k - \alpha,
	\label{e2.1.9}
\end{equation}
putting
$$
	u (x)
	=
	\max 
	\{ 
		|x|^{
			(l - k + \alpha) / (\alpha - \lambda)
		},
		\gamma 
	\},
$$
where $\gamma > 0$ is large enough, we obtain a solution of~\eqref{e2.1.1}
with a non-negative function 
$b \in L_{\infty, loc} ({\mathbb R^n})$ 
such that
\begin{equation}
	b (x) \sim |x|^k
	\quad
	\mbox{as } x \to \infty
	\label{e2.1.8}
\end{equation}
and a non-negative function
$q \in L_{\infty, loc} ({\mathbb R^n})$ 
satisfying relation~\eqref{e2.1.2}.
\end{example}

\begin{example}\label{e2.2}
Let
$b \in L_{\infty, loc} ({\mathbb R^n})$ 
and
$q \in L_{\infty, loc} ({\mathbb R^n})$ 
be non-negative functions such that~\eqref{e2.1.5} holds and, moreover,
\begin{equation}
	q (x) \sim |x|^l \log^m |x|
	\quad
	\mbox{as } x \to \infty.
	\label{e2.2.1}
\end{equation}

At first, we assume that~\eqref{e2.1.6} is valid. By Theorem~\ref{t2.1}, the condition
$$
	\lambda > \alpha
	\quad
	\mbox{and}
	\quad
	l > - p
$$
guarantees that any non-negative solution of~\eqref{e2.1.1} is equal to zero 
almost everywhere in ${\mathbb R}^n$ for all $m \in {\mathbb R}$.
We are interested in the case of the critical exponent $l = -p$.
In this case,~\eqref{e2.1.6} can obviously be written as
$$
	k \le \alpha - p.
$$
In so doing, if
\begin{equation}
	\lambda > \alpha
	\quad
	\mbox{and}
	\quad
	m \ge 1 - p,
	\label{e2.2.2}
\end{equation}
then in accordance with Theorem~\ref{t2.1} any non-negative solution of~\eqref{e2.1.1} 
is equal to zero almost everywhere in ${\mathbb R}^n$.
As noted in Example~\ref{e2.1}, the first inequality in~\eqref{e2.2.2} is exact. 
At the same time, if $n > p$ and, moreover,
$$
	\lambda > \alpha
	\quad
	\mbox{and}
	\quad
	m < 1 - p,
$$
then
$$
	u (x)
	=
	\max 
	\{
		\log^{
			(m + p - 1) / (p - \lambda - 1)
		}
		|x|,
		\gamma
	\}
$$
is a solution of~\eqref{e2.1.1} for enough large $\gamma > 0$, 
where $b \equiv 0$ and
$q \in L_{\infty, loc} ({\mathbb R^n})$ 
is a non-negative function satisfying relation~\eqref{e2.2.1}.
This demonstrates the exactness of the second inequality in~\eqref{e2.2.2}.

Now, assume that~\eqref{e2.1.7} holds.
According to Theorem~\ref{t2.1}, if
$$
	\lambda > \alpha
	\quad
	\mbox{and}
	\quad
	l > k - \alpha,
$$
then any non-negative solution of~\eqref{e2.1.1} 
is equal to zero almost everywhere in ${\mathbb R}^n$ for all $m \in {\mathbb R}$.
Let us consider the case of the critical exponent $l = k - \alpha$.
In this case relation~\eqref{e2.1.7} takes the form
\begin{equation}
	k > \alpha - p.
	\label{e2.2.5}
\end{equation}
By Theorem~\ref{t2.1}, the condition
\begin{equation}
	\lambda > \alpha
	\quad
	\mbox{and}
	\quad
	m \ge - \alpha
	\label{e2.2.3}
\end{equation}
implies that any non-negative solution of~\eqref{e2.1.1} 
is equal to zero almost everywhere in ${\mathbb R}^n$.
The first inequality in~\eqref{e2.2.3} is exact.
We show the exactness of the second inequality.
Let
\begin{equation}
	\lambda > \alpha
	\quad
	\mbox{and}
	\quad
	m < - \alpha.
	\label{e2.2.4}
\end{equation}
Taking $\gamma > 0$ large enough, one can verify that
$$
	u (x)
	=
	\max
	\{
		\log
		^{
			(m + \alpha) / (\alpha - \lambda)
		}
		|x|,
		\gamma
	\}
$$
is a solution of~\eqref{e2.1.1} for some non-negative functions
$b \in L_{\infty, loc} ({\mathbb R^n})$ 
and
$q \in L_{\infty, loc} ({\mathbb R^n})$ 
satisfying relations~\eqref{e2.1.8} and~\eqref{e2.2.1}, respectively.
\end{example}

\begin{example}\label{e2.3}
Consider the inequality
\begin{equation}
	\diver (|D u|^{p-2} D u)
	+
	b (x) |D u|^\alpha
	\ge
	q (x)
	u^\alpha
	\log^\mu (1 + u)
	\quad
	\mbox{in } {\mathbb R^n},
	\label{e2.3.1}
\end{equation}
where
$b \in L_{\infty, loc} ({\mathbb R^n})$ 
and
$q \in L_{\infty, loc} ({\mathbb R^n})$ 
are non-negative functions such that conditions~\eqref{e2.1.5} and~\eqref{e2.1.2} hold.
We denote $\mu_* = \alpha$ for $\alpha > p - 1$ and $\mu_* = p$ for $\alpha = p - 1$.

Let~\eqref{e2.1.6} be valid. If
\begin{equation}
	\mu > \mu_*
	\quad
	\mbox{and}
	\quad
	l \ge - p,
	\label{e2.3.2}
\end{equation}
then in accordance with Theorem~\ref{t2.1} any non-negative solution of~\eqref{e2.3.1}
is equal to zero almost everywhere in ${\mathbb R}^n$.
The first inequality in~\eqref{e2.3.2} is exact. Namely, if $\mu \le \mu_*$, then~\eqref{e2.3.1}
has a positive solution for all positive functions
$b \in C ({\mathbb R^n})$ 
and
$q \in C ({\mathbb R^n})$.
In the case that $n \ge p$ and, moreover,
$$
	\mu > \mu_*
	\quad
	\mbox{and}
	\quad
	l < - p,
$$
we can also specify a positive solution of~\eqref{e2.3.1}, 
where $b \equiv 0$ and $q \in L_{\infty, loc} ({\mathbb R^n})$ is a non-negative function
satisfying relation~\eqref{e2.1.2}.
This solution is given by
$$
	u (x)
	=
	\max
	\{
		|x|^{
			(l + p) / (p - \alpha - 1)
		}
		\log^{
			\mu / (p - \alpha - 1)
		}
		|x|,
		\gamma
	\}
$$
for $\alpha > p - 1$ and
$$
	u (x)
	=
	e^{
		\max
		\{
			|x|^{
				(l + p) / (p - \mu)
			},
			\gamma
		\}
	}
$$
for $\alpha = p - 1$, where $\gamma > 0$ is large enough.
Hence, the second inequality in~\eqref{e2.3.2} is exact too.

Assume now that~\eqref{e2.1.7} is fulfilled.
By Theorem~\ref{t2.1}, if
\begin{equation}
	\mu > \mu_*
	\quad
	\mbox{and}
	\quad
	l \ge k - \alpha,
	\label{e2.3.3}
\end{equation}
then any non-negative solution of~\eqref{e2.3.1} is equal to zero 
almost everywhere in ${\mathbb R}^n$.
As we have previously said, the first inequality in~\eqref{e2.3.3} is exact.
Let us show the exactness of the second inequality.
Really, in the case that 
$$
	\mu > \mu_*
	\quad
	\mbox{and}
	\quad
	l < k - \alpha,
$$
putting
$$
	u (x)
	=
	e^{
		\max
		\{
			|x|^{
				(l - k + \alpha) / (\alpha - \mu)
			},
			\gamma
		\}
	},
$$
where $\gamma > 0$ is large enough, we obtain a solution of~\eqref{e2.3.1}
with non-negative functions 
$b \in L_{\infty, loc} ({\mathbb R^n})$ 
and
$q \in L_{\infty, loc} ({\mathbb R^n})$ 
satisfying relations~\eqref{e2.1.8} and~\eqref{e2.1.2}, respectively.
\end{example}

\begin{theorem}\label{t2.2}
Let there be real numbers $\theta > 1$ and $\sigma > 1$ such that~\eqref{t2.1.3} is valid
and, moreover, at least one of conditions~\eqref{t2.1.1}, \eqref{t2.1.2} does not hold.
If $u \not\equiv 0$ is a non-negative solution of~\eqref{1.1}, \eqref{1.2}, then
\begin{equation}
	M (r; u)
	\ge
	F_\infty^{- 1}
	\left(
		C
		\int_{r_0}^r
		(\xi f_\sigma (\xi))^{1 / (p - 1)}
		\,
		d\xi
	\right)
	\label{t2.2.1}
\end{equation}
for all sufficiently large $r$, where $F_\infty^{- 1}$ is the function inverse to
$$
	F_\infty (\xi)
	=
	\left(
		\int_1^\xi
		(g_\theta (t) t)^{- 1 / p}
		\,
		dt
	\right)^{p / (p - 1)}
	+
	\int_1^\xi
	g^{- 1 / \alpha}_\theta (t)
	\,
	dt
$$
and the constant $C > 0$ depends only on $n$, $p$, $\theta$, $\sigma$, $\alpha$, $C_1$, and $C_2$.
\end{theorem}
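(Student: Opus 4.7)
\textbf{Plan for the proof of Theorem~\ref{t2.2}.} The approach is to derive, for every non-trivial non-negative solution of~\eqref{1.1}, \eqref{1.2}, an integral inequality of the schematic form
\[
	F_\infty(M(r;u))
	\ge
	C
	\int_{r_0}^r (\xi f_\sigma(\xi))^{1/(p-1)}\,d\xi
\]
for all sufficiently large $r$, and then to invert it. Estimate~\eqref{t2.2.1} will then follow because the failure of at least one of~\eqref{t2.1.1}, \eqref{t2.1.2} ensures $F_\infty(+\infty)=+\infty$, so $F_\infty$ is a continuous strictly increasing bijection and $F_\infty^{-1}$ is defined on the whole range of the right-hand side. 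Theorem~\ref{t2.1} corresponds to the case in which $F_\infty$ is bounded while the right-hand side, by~\eqref{t2.1.3}, is not, producing a contradiction; thus both results rest on the same central annular estimate and only the final bookkeeping differs.

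To produce the central estimate I would test the weak formulation of~\eqref{1.1} against non-negative functions of the type $\psi(u)\eta^\kappa$ supported on annular regions $\Omega_{r/\sigma,\sigma r}$, with $\eta$ a standard radial bump, $\kappa$ large, and $\psi$ a suitable monotone function built from $g_\theta$. Using the ellipticity $C_1|\zeta|^p\le\zeta A(x,\zeta)$ and the growth $|A(x,\zeta)|\le C_2|\zeta|^{p-1}$, together with H\"older's inequality for the boundary-type term that comes from $D\eta^\kappa$, produces an annular estimate whose right-hand side contains two competing terms: one proportional to a gradient energy $\int|Du|^p\eta^\kappa$ coming from the principal part, and one proportional to $\int b|Du|^\alpha\eta^\kappa$ from the lower-order term. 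The decisive step is the application of Young's inequality with the conjugate exponents $p/(p-1)$ and $p$, respectively $\alpha/(p-1)$ and $\alpha/(\alpha-p+1)$, to absorb the $b$-term into the sum of $\varepsilon|Du|^p$ and a multiple of $q(x)g(u)$; the balance that makes this absorption possible and simultaneously extracts the coefficient $q_\sigma(r)$ on the right is precisely what generates the correction $1+rq_\sigma^{(\alpha-p+1)/\alpha}\esssup b^{(p-1)/\alpha}$ in the denominator of~\eqref{1.3}.

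From the resulting annular inequality I would pass to an ODE-type inequality for $M(r;u)$. The principal part produces a contribution that, after integration and a change of variables $t=M(r;u)$, involves $(g_\theta(t)t)^{-1/p}$; this is the first summand of $F_\infty$ and is the standard Keller--Osserman/$p$-Laplace mechanism. The contribution of the $b$-term, when one exploits the regime in which it dominates the principal part, integrates to $g_\theta^{-1/\alpha}(t)$, giving the second summand of $F_\infty$. Combining the two regimes with the monotonicity of $g_\theta$ and iterating the annular inequality on a geometric sequence of radii gives the desired integral bound. Inverting $F_\infty$ and absorbing all constants into $C$ yields~\eqref{t2.2.1}; the dependence of $C$ only on $n,p,\theta,\sigma,\alpha,C_1,C_2$ is clear since every other parameter has been scaled out by passing to $q_\sigma$, $g_\theta$, and $f_\sigma$.

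The hardest step is the Young-inequality bookkeeping: one has to choose the weights so that both summands on the right-hand side end up with the same pair $q_\sigma(r)g_\theta(M(r;u))$ and the same geometric factor $(\xi f_\sigma(\xi))^{1/(p-1)}$ after integration in $r$, no matter which of the two regimes $\alpha=p-1$ or $\alpha=p$ one is close to. A secondary technical point is the passage from the essential supremum $M(r;u)$ on the sphere $S_r\cap\Omega$ to the annular integral quantity that the test-function method naturally produces; this is what forces the use of two different scales $\theta$ and $\sigma$, and the boundary condition~\eqref{1.2} enters here through a Poincar\'e-type inequality on $\Omega_{r/\sigma,\sigma r}$ that allows $u$ itself, rather than $u-M$, to be controlled by $|Du|$.
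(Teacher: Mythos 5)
Your final bookkeeping is right and matches the paper: once one has the central inequality
\begin{equation*}
	\left(
		\int_{M(r_*;u)}^{M(r;u)}
		(g_\theta(t)t)^{-1/p}\,dt
	\right)^{p/(p-1)}
	+
	\int_{M(r_*;u)}^{M(r;u)}
	g_\theta^{-1/\alpha}(t)\,dt
	\ge
	C\int_{r_*}^{r}(\xi f_\sigma(\xi))^{1/(p-1)}\,d\xi ,
\end{equation*}
the failure of~\eqref{t2.1.1} or~\eqref{t2.1.2} makes $F_\infty$ an increasing bijection onto $[0,\infty)$ (after one also checks, as the paper does via Lemma~\ref{l3.5} and~\eqref{t2.1.3}, that $M(r;u)\to\infty$ so that $r_*$ can be chosen with $M(r_*;u)>1$ and the integral from $r_*$ can be replaced by half the integral from $r_0$), and~\eqref{t2.2.1} follows by inversion. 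The gap is in how you propose to obtain that central inequality. Testing the weak formulation against $\psi(u)\eta^\kappa$ on annuli yields upper bounds for integral quantities such as $\int q\,g(u)\,\eta^\kappa$; that is the right direction for the nonexistence statement but it does not, by itself, give a \emph{lower} bound for the spherical essential supremum $M(r;u)$, which is what Theorem~\ref{t2.2} asserts. The paper's actual route is to prove difference inequalities for $M$ between two radii (Lemmas~\ref{l3.1} and~\ref{l3.2}), and their proofs are genuinely local: one extends $u$ by zero using~\eqref{1.2}, proves a maximum principle so that $M(\cdot;u)$ is non-decreasing (Corollary~\ref{c4.2}), locates a point $z\in S_{r_*}$ where $u$ nearly attains $M(r_*;u)$, and runs a Caccioppoli estimate, Moser iteration, and a measure lemma (Lemmas~\ref{l4.3}--\ref{l4.7}) to convert largeness of $u$ at $z$ into a quantitative increment $M(r_2;u)-M(r_1;u)$. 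None of this is present in your plan, and your suggestion that~\eqref{1.2} enters through a Poincar\'e inequality on the annulus is not how it is used; it is used for the zero extension and the maximum principle.

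A second, smaller issue: you propose to absorb the $b$-term by Young's inequality into $\varepsilon|Du|^p$ plus a multiple of $q\,g(u)$. A single absorption of that kind produces one combined term, whereas the two distinct summands of $F_\infty$ (with exponents $-1/p$ and $-1/\alpha$) and the precise form of $f_\sigma$ in~\eqref{1.3} come from a genuine dichotomy, not an absorption: on each sub-annulus of a stopping-time decomposition one checks whether the quantity $(M(\rho_i;u)-M(\rho_{i+1}+0;u))^{\alpha-p+1}(\rho_i-\rho_{i+1})^{p-\alpha}\esssup b$ is below or above $C_1/4$, applies Lemma~\ref{l3.1} in the first case (contributing $(g_\theta(t)t)^{-1/p}$) and Lemma~\ref{l3.2} in the second (contributing $g_\theta^{-1/\alpha}$), and then argues that one of the two families of sub-annuli covers at least half of the range (Lemmas~\ref{l3.3}--\ref{l3.5}). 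You gesture at ``combining the two regimes,'' but without the increment lemmas for $M$ and the stopping-time summation the plan does not close.
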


\begin{theorem}\label{t2.3}
Let there be real numbers $\theta > 1$ and $\sigma > 1$ 
such that~\eqref{t2.1.1} and \eqref{t2.1.2} 
are valid and, moreover, condition~\eqref{t2.1.3} does not hold.
Then any non-negative solution of~\eqref{1.1}, \eqref{1.2} satisfies the estimate
\begin{equation}
	M (r; u)
	\le
	F_0^{- 1}
	\left(
		C
		\int_r^\infty
		(\xi f_\sigma (\xi))^{1 / (p - 1)}
		\,
		d\xi
	\right)
	\label{t2.3.1}
\end{equation}
for all sufficiently large $r$, where $F_0^{- 1}$ is the function inverse to
$$
	F_0 (\xi)
	=
	\left(
		\int_\xi^\infty
		(g_\theta (t) t)^{- 1 / p}
		\,
		dt
	\right)^{p / (p - 1)}
	+
	\int_\xi^\infty
	g^{- 1 / \alpha}_\theta (t)
	\,
	dt
$$
and the constant $C > 0$ depends only on $n$, $p$, $\theta$, $\sigma$, $\alpha$, $C_1$, and $C_2$.
\end{theorem}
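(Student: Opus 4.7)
The plan is to deduce Theorem~\ref{t2.3} from the same annulus-to-annulus iteration that underlies Theorem~\ref{t2.2}, but telescoped in the opposite direction. Let $u$ be a non-negative solution of~\eqref{1.1},~\eqref{1.2} and write $M(r) := M(r;u)$ (one may as well replace $M$ by $r \mapsto \esssup_{\Omega \cap B_r} u$ to make it monotone, since this changes neither the hypotheses nor the conclusion by more than a shift in $r$). The central ingredient is the following step inequality: there is a constant $C_0 > 0$, depending only on $n$, $p$, $\theta$, $\sigma$, $\alpha$, $C_1$, $C_2$, such that for all sufficiently large $r_1 < r_2$,
\begin{equation*}
	F_0 (M (r_1)) - F_0 (M (r_2))
	\ge
	C_0
	\int_{r_1}^{r_2}
	(\xi f_\sigma (\xi))^{1 / (p - 1)}
	\, d\xi.
\end{equation*}

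To obtain the step inequality, I would test the weak form of~\eqref{1.1} on the annular shell $\Omega_{r/\sigma,\sigma r}$ against a product $\varphi = \eta^{\kappa} \Psi(u)$, where $\eta$ is a smooth cutoff concentrated in the shell, $\kappa$ is a sufficiently large power, and $\Psi$ is a monotone function chosen so that $\Psi'(u) g(u)$ reproduces the integrand of $F_0$ once one applies the oscillation bound $g(u) \ge g_\theta(M(r))$ on the portion of the shell where $u \in (M(r)/\theta, \theta M(r))$. The structural bounds on $A$ turn the principal part into a positive dissipative contribution of order $\int |Du|^p \Psi'(u) \eta^{\kappa}$ plus a boundary-type flux controlled by $\int |Du|^{p-1} |D\eta| \Psi(u) \eta^{\kappa-1}$. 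The drift $b(x) |Du|^\alpha$ is split by Young's inequality with the conjugate pair $(\alpha/(p-1),\, \alpha/(\alpha - p + 1))$, which is the pair that generates precisely the factor $b^{(p-1)/\alpha}$ and the exponent $(\alpha - p + 1)/\alpha$ appearing in the denominator of~\eqref{1.3}. After absorption into the dissipative term and normalization by $q_\sigma(r)$, the surviving inequality reduces to a radial differential estimate for $F_0 \circ M$, whose integration from $r_1$ to $r_2$ yields the step inequality.

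Given the step inequality, the theorem follows immediately. Because~\eqref{t2.1.3} fails, the integral $\int^{\infty} (\xi f_\sigma(\xi))^{1/(p-1)}\, d\xi$ converges; since $F_0 \ge 0$, letting $r_2 \to \infty$ yields
\begin{equation*}
	F_0 (M (r_1))
	\ge
	C_0
	\int_{r_1}^{\infty}
	(\xi f_\sigma (\xi))^{1 / (p - 1)}
	\, d\xi
\end{equation*}
for all sufficiently large $r_1$. Conditions~\eqref{t2.1.1} and~\eqref{t2.1.2} make $F_0$ finite on $(0,\infty)$, and the positivity of $g_\theta$ makes it strictly decreasing there; hence $F_0^{-1}$ is well-defined and strictly decreasing, and applying it reverses the last inequality to produce~\eqref{t2.3.1} with $C = 1/C_0$. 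The main obstacle is the step inequality itself: choosing $\Psi$ so that the induced radial estimate closes is delicate because $F_0$ is a sum of two integrals of different homogeneities — an exponent-$p/(p-1)$ piece coming from the principal part and an exponent-$1$ piece from the drift — and both extreme regimes $\alpha = p - 1$ and $\alpha = p$ must be handled simultaneously by the same Young splitting.
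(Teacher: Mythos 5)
Your endgame is exactly the paper's: the published proof of Theorem~\ref{t2.3} consists precisely of your final paragraph, namely applying a step inequality (Lemma~\ref{l3.5}, which bounds $\bigl(\int_{M_1}^{M_2}(g_\theta(t)t)^{-1/p}\,dt\bigr)^{p/(p-1)}+\int_{M_1}^{M_2}g_\theta^{-1/\alpha}(t)\,dt$ from below by $C\int_{r_1}^{r_2}(\xi f_\sigma(\xi))^{1/(p-1)}\,d\xi$; your differenced form $F_0(M(r_1;u))-F_0(M(r_2;u))\ge\cdots$ follows from it by superadditivity of $s\mapsto s^{p/(p-1)}$), then letting $r_2\to\infty$, using $F_0\ge 0$ and the failure of~\eqref{t2.1.3}, and inverting the decreasing function $F_0$. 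That reduction is correct.

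The genuine gap is the step inequality itself, which is where essentially all of the paper's work lives (Sections~\ref{Proof_of_Theorems} and~\ref{Proof_of_Lemmas}), and the single test-function computation you sketch would not close. Two concrete obstructions. First, no one profile $\Psi$ can make $\Psi'(u)g(u)$ ``reproduce the integrand of $F_0$,'' because $F_0$ is a sum of two integrals with different integrands and different homogeneities; in the paper these two terms come from a genuine dichotomy --- on each radial piece either the condition of Lemma~\ref{l3.1} holds (diffusion dominates, producing the $(g_\theta(t)t)^{-1/p}$ term) or that of Lemma~\ref{l3.2} holds (drift dominates, producing the $g_\theta^{-1/\alpha}$ term) --- and $[r_1,r_2]$ must be partitioned and the pieces classified (the sets $\Xi_1,\Xi_2,\Xi_3$ of Lemmas~\ref{l3.3}--\ref{l3.5}) before summing; a single Young splitting cannot merge the two regimes. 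Second, and more fundamentally, testing the weak formulation yields an integral inequality over a shell, not ``a radial differential estimate for $F_0\circ M$'': $M(r;u)$ is an essential supremum, and converting integral information into a lower bound for $M(r_2;u)-M(r_1;u)$ requires knowing that $u$ is close to its supremum on a set of definite measure near the sphere where it is attained. That localization is the content of the maximum principle (Lemma~\ref{l4.2}), the Caccioppoli and Moser estimates (Lemmas~\ref{l4.4}, \ref{l4.5}), and the measure-to-oscillation Lemmas~\ref{l4.6}--\ref{l4.10}, none of which your sketch supplies or replaces. A smaller point: your step inequality cannot hold with a uniform constant for \emph{all} large $r_1<r_2$; the paper needs $\sigma r_1\le r_2$ and $\theta M(r_1+0;u)\le M(r_2;u)$, and when the latter fails for every $r_2$ one instead observes that $M(\cdot;u)$ is bounded while $F_0^{-1}\bigl(C\int_r^\infty(\xi f_\sigma(\xi))^{1/(p-1)}\,d\xi\bigr)\to\infty$, so \eqref{t2.3.1} holds trivially for large $r$.
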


Theorems~\ref{t2.2} and~\ref{t2.3} are proved in Section~\ref{Proof_of_Theorems}.

\begin{remark}\label{r2.1}
We assume by definition that $F_0 (\infty) = 0$. 
Therefore, $F_0^{- 1} (0) = \infty$ and~\eqref{t2.3.1} is fulfilled automatically if 
the integral in the right-hand side is equal to zero.
\end{remark}

The following examples demonstrate an application of Theorems~\ref{t2.2} and~\ref{t2.3}.

\begin{example}\label{e2.4}
Consider inequality~\eqref{e2.1.1},
where
$b \in L_{\infty, loc} ({\mathbb R^n})$ 
and
$q \in L_{\infty, loc} ({\mathbb R^n})$ 
are non-negative functions such that relations~\eqref{e2.1.5}, \eqref{e2.1.2}, 
and~\eqref{e2.1.7} are valid. 
By Theorem~\ref{t2.2}, if
\begin{equation}
	0 \le \lambda < \alpha
	\quad
	\mbox{and}
	\quad
	l > k - \alpha.
	\label{e2.4.1}
\end{equation}
then any non-negative solution $u \not\equiv 0$ 
of~\eqref{e2.1.1} satisfies the estimate
$$
	M (r; u)
	\ge
	C
	r^{
		(l - k + \alpha) / (\alpha - \lambda)
	}
$$
for all $r$ in a neighborhood of infinity, where the constant $C > 0$ 
does not depend on $u$.

Now, assume that condition~\eqref{e2.1.9} holds instead of~\eqref{e2.4.1}. 
Then in accordance with Theorem~\ref{t2.3} any non-negative solution of~\eqref{e2.1.1} 
satisfies the estimate
$$
	M (r; u)
	\le
	C
	r^{
		(l - k + \alpha) / (\alpha - \lambda)
	}
$$
for all $r$ in a neighborhood of infinity, where the constant $C > 0$ does not depend on $u$.
\end{example}

\begin{example}\label{e2.5}
Let
$b \in L_{\infty, loc} ({\mathbb R^n})$ 
and
$q \in L_{\infty, loc} ({\mathbb R^n})$ 
be non-negative functions such that~\eqref{e2.1.5} and~\eqref{e2.2.5} are valid and, moreover,
$$
	q (x) \sim |x|^{k - \alpha} \log^m |x|
	\quad
	\mbox{as } x \to \infty
$$
In other words, we take the critical exponent $l = k - \alpha$ in formula~\eqref{e2.2.1}.
According to Theorem~\ref{t2.2}, if
\begin{equation}
	0 \le \lambda < \alpha
	\quad
	\mbox{and}
	\quad
	m > - \alpha,
	\label{e2.5.2}
\end{equation}
then any non-negative solution $u \not\equiv 0$ of~\eqref{e2.1.1} satisfies the estimate
$$
	M (r; u)
	\ge
	C
	\log^{
		(m + \alpha) / (\alpha - \lambda)
	}
	r
$$
for all $r$ in a neighborhood of infinity, where the constant $C > 0$ 
does not depend on $u$.

In the case that~\eqref{e2.2.4} holds instead of~\eqref{e2.5.2},
by Theorem~\ref{t2.3}, any non-negative solution of~\eqref{e2.1.1} 
satisfies the estimate
$$
	M (r; u)
	\le
	C
	\log^{
		(m + \alpha) / (\alpha - \lambda)
	}
	r
$$
for all $r$ in a neighborhood of infinity, where the constant $C > 0$ 
does not depend on $u$.
\end{example}

It does not present any particular problem to verify that all estimates 
given in Examples~\ref{e2.4} and~\ref{e2.5} are exact.

\section{Proof of Theorems~\ref{t2.1}--\ref{t2.3}}\label{Proof_of_Theorems}

Throughout this section, we shall assume that $u \not\equiv 0$ 
is a non-negative solution of~\eqref{1.1}, \eqref{1.2}.
We need several preliminary assertions.

\begin{lemma}\label{l3.0}
Let $M (r_1; u) = M (r_2; u) > 0$ for some real numbers $r_0 < r_1 < r_2$. Then
$$
	\essinf_{
		\Omega_{r_1, r_2}
	}
	q
	=
	0.
$$
\end{lemma}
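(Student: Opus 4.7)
I would prove this by contradiction. Suppose to the contrary that $c_0 := \essinf_{\Omega_{r_1,r_2}} q > 0$. Then on the annulus $\Omega_{r_1, r_2}$ the inequality~\eqref{1.1} implies
\[
	\diver A(x, Du) + b(x)|Du|^\alpha \geq c_0\, g(u),
\]
whose right-hand side is non-negative everywhere and strictly positive on $\{u > 0\}$.

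My first step would be to apply a weak maximum principle to $u$ on $\Omega_{r_1, r_2}$, viewed as a subsolution of $\diver A(x, Du) + b|Du|^\alpha \geq 0$. The trace of $u$ on $\partial\Omega \cap \overline{B}_{r_1, r_2}$ vanishes by~\eqref{1.2}, while on each sphere component $S_{r_i} \cap \Omega$ it is essentially bounded above by $M$ (equal to $M$ in essential supremum by hypothesis). Combined, the boundary trace of $u$ on $\partial \Omega_{r_1, r_2}$ is essentially $\leq M$, and the standard comparison principle for this quasilinear class (compatible with the structure bounds on $A$ and the local boundedness of $b$) yields $u \leq M$ almost everywhere in $\Omega_{r_1, r_2}$.

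Next, I would test the weak formulation against $\varphi_\delta = (u - M + \delta)_+ \eta^p$, where $\delta \in (0, M/2)$ is small and $\eta \in C_0^\infty(\mathbb R^n)$ is a non-negative cutoff with $\mathrm{supp}\,\eta \subset B_{r_1, r_2}$ and $\eta \equiv 1$ on a slightly smaller sub-annulus. This $\varphi_\delta$ is admissible, after density approximation, because $(u - M + \delta)_+$ vanishes where $u < M - \delta$ (so in particular near $\partial\Omega$, by~\eqref{1.2}) and is bounded above by $\delta$, while $\eta$ supplies compact support in $B_{r_1, r_2}$. Expanding $D\varphi_\delta$, using $A(x,\zeta)\cdot\zeta \geq C_1 |\zeta|^p$ and $|A(x,\zeta)| \leq C_2 |\zeta|^{p-1}$, and applying Young's inequality both to the cross term from $A(x, Du) \cdot D\eta$ and to the lower-order contribution $b|Du|^\alpha \varphi_\delta$ (legitimate since $p - 1 \leq \alpha \leq p$), one arrives at an estimate of the form
\[
	\int_{\Omega_{r_1, r_2}} q(x)\, g(u)\, (u - M + \delta)_+\, \eta^p\, dx \leq C\,\delta^{\,p}
\]
with $C$ independent of $\delta$.

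The contradiction is then extracted by letting $\delta \to 0^+$. On the set $\{u > M - \delta/2\} \cap \{\eta = 1\}$ one has $q \geq c_0$, $(u - M + \delta)_+ \geq \delta/2$, and, by continuity of $g$ at $M$ together with $g(M) > 0$, also $g(u) \geq g(M)/2$ for $\delta$ sufficiently small. Hence the left-hand side above is bounded below by a constant multiple of $\delta \cdot |\{u > M - \delta/2\} \cap \{\eta = 1\}|$, whereas the right-hand side is $O(\delta^p)$; since $p > 1$, this forces $|\{u > M - \delta/2\} \cap \{\eta = 1\}| \to 0$ as $\delta \to 0$, contradicting the trace hypothesis $\esssup u|_{S_{r_i}} = M$ once it is propagated into the bulk. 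The main obstacle I anticipate lies in precisely this last propagation step: quantifying a uniform-in-$\delta$ positive lower bound on the measure of $\{u > M - \delta/2\}$ near a neighbourhood of $S_{r_1}$ or $S_{r_2}$, which calls for invoking Sobolev trace theory together with the weak subsolution property from the first step.
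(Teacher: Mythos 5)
Your test-function computation is sound as far as it goes: with $\varphi_\delta=(u-M+\delta)_+\eta^p$ one does obtain $\int q\,g(u)\,(u-M+\delta)_+\eta^p\,dx\le C\delta^p$ and hence $\mes\left(\{u>M-\delta/2\}\cap\{\eta=1\}\right)\le C\delta^{p-1}\to 0$. But the argument stops exactly where the real difficulty begins, and the tool you propose for closing the gap is the wrong one. The hypothesis $\esssup_{S_r\cap\Omega}u=M$ (for $r=r_1,r_2$, and in fact for every $r\in[r_1,r_2]$, since $M(\cdot;u)$ is non-decreasing and hence constant here) only tells you that $\mes\{u>M-\delta/2\}>0$ for each \emph{fixed} $\delta$; this is perfectly compatible with that measure tending to zero as $\delta\to0$, so no contradiction follows. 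Sobolev trace theory cannot upgrade ``the trace has essential supremum $M$'' to a lower bound on the measure of superlevel sets that is uniform in $\delta$: a trace can approach its essential supremum on arbitrarily small subsets of the sphere, and even a positive-measure set on a sphere controls nothing about the volume of $\{u>M-\delta/2\}$ in the bulk without quantitative interior regularity of $u$.

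What actually closes the gap --- and what the paper does --- is an interior growth lemma of De Giorgi/Moser type: Lemma 4.6 of the paper shows that for a subsolution of the homogeneous inequality, if $\mes\{v>0\}\le\delta r^n$ then $\esssup_{B_{r/8}^z}v\le\varepsilon\esssup_{B_r^z}v$. Read contrapositively at a point $z$ of the \emph{middle} sphere $S_{(r_1+r_2)/2}$ at which $\lim_{\xi\to+0}\esssup_{B_\xi^z}(u-M+\delta)_+=\delta$ (such a point exists precisely because $M(\cdot;u)\equiv M$ on $[r_1,r_2]$, so the supremum is attained on a sphere interior to the annulus, away from $S_{r_1}$ and $S_{r_2}$ where no interior lemma applies), it yields $\mes\left(\{u>M-\delta\}\cap B_{r/4}^z\right)\ge c\,r^n$ with $c$ independent of $\delta$ --- exactly the uniform lower bound your argument needs, after which your $O(\delta^{p-1})$ upper bound gives the contradiction. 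This machinery (the paper's Lemmas 4.3--4.7, packaged as Lemma 4.0 and fed by the Caccioppoli and Moser estimates of Lemmas 4.4 and 4.5) is the substantive content of the proof; without it, or an equivalent weak Harnack-type input, your argument does not conclude.
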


\begin{lemma}\label{l3.1}
Let 
${M (r_2; u)} > {M (r_1; u)} \ge \beta {M (r_2; u)}$
and
$$
	(M (r_2; u) - M (r_1; u))^{\alpha - p + 1}
	(r_2 - r_1)^{p - \alpha}
	\esssup_{
		\Omega_{r_1, r_2}
	}
	b
	\le
	\frac{C_1}{4}
$$
for some real numbers 
$r_0 < r_1 < r_2$ 
and 
$0 < \beta < 1$. Then
$$
	(M (r_2; u) - M (r_1; u))^{p - 1}
	\ge
	C
	(r_2 - r_1)^p
	\essinf_{
		\Omega_{r_1, r_2}
	}
	q
	\inf_{
		(\beta M (r_1; u), M (r_2; u))
	}
	g,
$$
where the constant $C > 0$ depends only on $n$, $p$, $\alpha$, $\beta$, $C_1$, and $C_2$.
\end{lemma}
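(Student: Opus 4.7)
My plan is to test the weak form of~\eqref{1.1} against $\varphi = \eta^p v$, where $k := M(r_1;u)$, $K := M(r_2;u)$,
$$v := \min\{(u-k)_+,\,K-k\}$$
is the truncated excess (vanishing on $\{u\leq k\}$, bounded above by $K-k$, with $Dv = Du\,\chi_{\{k<u<K\}}$), and $\eta$ is a smooth non-negative radial cutoff localized to a neighborhood of $\Omega_{r_1,r_2}$ satisfying $|D\eta|\leq C/(r_2-r_1)$. Expanding $D\varphi = \eta^p Dv + p v\eta^{p-1}D\eta$ and invoking the structure conditions $A(x,Du)\cdot Du \geq C_1|Du|^p$ and $|A(x,Du)| \leq C_2|Du|^{p-1}$, the weak inequality reduces to
$$C_1\!\int_{\{k<u<K\}}\!|Du|^p\eta^p\,dx + \int qg(u)\,v\,\eta^p\,dx \leq pC_2\!\int |Du|^{p-1}|D\eta|\,v\,\eta^{p-1}\,dx + \int b|Du|^\alpha v\,\eta^p\,dx.$$

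Next, I would apply Young's inequality to each right-hand integral, with exponent pairs $(p/(p-1),p)$ and $(p/\alpha,p/(p-\alpha))$ respectively, absorbing the produced $(C_1/2)|Du|^p\eta^p\chi_{\{k<u<K\}}$ fractions into the left-hand side. The resulting Young remainders have the form $C|D\eta|^p v^p$ and $Cb^{p/(p-\alpha)}v^{p/(p-\alpha)}\eta^p$. The crucial algebraic step is to use the hypothesis $(K-k)^{\alpha-p+1}(r_2-r_1)^{p-\alpha}\esssup_{\Omega_{r_1,r_2}}b\leq C_1/4$, rewritten as $\esssup b \leq (C_1/4)(K-k)^{p-1-\alpha}(r_2-r_1)^{\alpha-p}$, together with $v\leq K-k$; the identity $p(p-1-\alpha)+\alpha = (p-1)(p-\alpha)$ then yields
$$b^{p/(p-\alpha)} v^{p/(p-\alpha)} \leq C\,\frac{(K-k)^{p-1}}{(r_2-r_1)^p}\,v,$$
so both remainders sit at the common scale $C(K-k)^p(r_2-r_1)^{-p}$.

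For the left side, $v > 0$ on $\{k<u<K\}$, where $u\in(k,K)\subset(\beta k, K)$ (because $k\geq\beta K\geq\beta k$), so $g(u)\geq g_* := \inf_{(\beta k,K)}g$ and $q\geq q_* := \essinf_{\Omega_{r_1,r_2}}q$ on this set, giving $\int qg(u)v\eta^p \geq q_*\,g_*\,\int_{\{k<u<K\}}v\eta^p$. Choosing $\eta$ equal to $1$ on a suitable sub-annulus and deriving a Chebyshev-type lower bound on $\int v\eta^p$ from the fact that $K$ is the essential supremum of $u$ on $S_{r_2}$, I arrive at $q_*\,g_* \leq C(K-k)^{p-1}(r_2-r_1)^{-p}$, which rearranges to the claim.

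The main obstacle is the drift integral $\int b|Du|^\alpha v\eta^p$ on the set $\{u\geq K\}$: there $v = K-k$ is a nonzero constant while $Dv\equiv 0$, so the LHS carries no $|Du|^p\chi_{\{u\geq K\}}$ term with which to absorb the Young residue arising from this set. I would resolve this either by introducing a smoothed cap $v_\varepsilon$ of $v$ that strictly decreases past $u=K$ (so that $Dv_\varepsilon\neq 0$ on $\{u>K\}$, letting $\varepsilon\to 0$), or by proving a separate Caccioppoli-type estimate on $(u-K)_+$ via a second test function $\eta^p(u-K)_+$ and combining it with the main estimate. Once this technicality is handled, all remaining computations are routine Young/absorption algebra.
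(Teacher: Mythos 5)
Your Caccioppoli computation with $\varphi=\eta^p v$ is sound as far as it goes (the Young/absorption algebra for the drift term, including the exponent identity $p(p-1-\alpha)+\alpha=(p-1)(p-\alpha)$, checks out), but the argument breaks down exactly at the step you dispatch in one sentence: the ``Chebyshev-type lower bound on $\int v\eta^p$.'' After absorption you are left with an inequality of the shape
$$
	q_*\,g_*\int v\,\eta^p\,dx
	\le
	C\,\frac{(K-k)^{p-1}}{(r_2-r_1)^p}\int_{\mathrm{supp}\,\eta} v\,dx ,
$$
and to conclude you must show that a definite fraction of the mass of $v=(u-k)_+$ lies where $\eta=1$. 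Nothing forces this: since $v$ does not vanish on $S_{r_2}$, the cutoff $\eta$ must die out near $S_{r_2}$, and the set $\{u>k\}$ may be a set of arbitrarily small measure concentrated precisely in the transition zone of $\eta$ (a thin ``tentacle'' reaching from $S_{r_2}$ inward). Then $\int v\eta^p$ is negligible compared with the right-hand side and the inequality yields no information about $q_*g_*$. Obtaining the needed measure/density estimate is the real content of the lemma, and it is what the paper spends most of Section~\ref{Proof_of_Lemmas} on: a Poincar\'e-type inequality on cubes (Lemma~\ref{l4.3}), a Caccioppoli estimate (Lemma~\ref{l4.4}), Moser's local boundedness (Lemma~\ref{l4.5}), a density lemma (Lemma~\ref{l4.6}), and a growth lemma on balls (Lemma~\ref{l4.7}), assembled in Lemma~\ref{l4.8} by choosing a center $z$ on the intermediate sphere $S_{(r_1+r_2)/2}$ where $v$ nearly attains $M(r_*;v)$ and normalizing via $\tilde v=\max\{v+M(r_2;v)-2M(r_*;v),0\}$. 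None of this is recoverable from ``the fact that $K$ is the essential supremum of $u$ on $S_{r_2}$'' alone.

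By contrast, the obstacle you do single out as the ``main'' one --- the set $\{u\ge K\}$, where your truncation is flat --- is illusory: by the maximum principle for the operator with the gradient term (Lemma~\ref{l4.2} and Corollary~\ref{c4.2} of the paper, which themselves require an absorption argument because of $b|Du|^\alpha$), one has $u\le M(r_2;u)=K$ almost everywhere in $B_{r_2}\cap\Omega$, so $\{u>K\}$ is a null set and the cap on $v$ is vacuous. So the proposal both misidentifies the difficulty and leaves the genuine one unresolved; as written it does not constitute a proof.
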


\begin{lemma}\label{l3.2}
Let 
${M (r_2; u)} > {M (r_1; u)} \ge \beta {M (r_2; u)}$
and
$$
	(M (r_2; u) - M (r_1; u))^{\alpha - p + 1}
	(r_2 - r_1)^{p - \alpha}
	\esssup_{
		\Omega_{r_1, r_2}
	}
	b
	\ge
	\lambda
$$
for some real numbers 
$r_0 < r_1 < r_2$, $0 < \beta < 1$, and $\lambda > 0$.
Then
$$
	(M (r_2; u) - M (r_1; u))^\alpha
	\esssup_{
		\Omega_{r_1, r_2}
	}
	b
	\ge
	C
	(r_2 - r_1)^\alpha
	\essinf_{
		\Omega_{r_1, r_2}
	}
	q
	\inf_{
		(\beta M (r_1; u), M (r_2; u))
	}
	g,
$$
where the constant $C > 0$ depends only on 
$n$, $p$, $\alpha$, $\beta$, $\lambda$, $C_1$, and $C_2$.
\end{lemma}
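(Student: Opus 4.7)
The plan is to parallel Lemma~\ref{l3.1}, but with the roles of the diffusion $\diver A(x,Du)$ and the first-order term $b(x)|Du|^\alpha$ reversed: under the present hypothesis $b$ is so large that $b|Du|^\alpha$ dominates, so I expect to absorb the contribution from the $p$-structure into $b|Du|^\alpha$ at the expense of a constant depending on $\lambda$.

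I would start by setting $k=\beta M(r_2;u)$, fixing a radial cutoff $\eta\in C_0^\infty(\mathbb R^n)$ with $\eta\equiv 1$ on $\overline{B_{r_1}}$, $\mathrm{supp}\,\eta\subset B_{r_2}$, $|D\eta|\le c_0/(r_2-r_1)$, and taking $w=\min\{(u-k)_+,\,M(r_2;u)-k\}$. Thanks to the boundary condition $u|_{\partial\Omega}=0$, the function $\varphi=\eta^\tau w$ is legitimate as a test function after a standard mollification, for the parameter value $\tau=\alpha/(\alpha-p+1)$ (the endpoint $\alpha=p-1$ to be handled separately, the Young split there becoming trivial and a direct pointwise bound $|D\eta|\le c_0/(r_2-r_1)$ combined with $(r_2-r_1)B\ge\lambda$ doing the job).

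Expanding $D\varphi$ and using the structural bounds $A\cdot Du\ge C_1|Du|^p$, $|A|\le C_2|Du|^{p-1}$, I would discard the nonnegative piece $C_1\!\int\eta^\tau\chi_{\{k<u<M(r_2;u)\}}|Du|^p\,dx$ and arrive at
$$
\tau C_2\!\int\!\eta^{\tau-1}|D\eta|\,w\,|Du|^{p-1}\,dx+\int b|Du|^\alpha\eta^\tau w\,dx\;\ge\;\int q\,g(u)\,\eta^\tau w\,dx.
$$
The truncation $w\le M(r_2;u)-k$ is what lets me lower-bound the right-hand side by $(\essinf_{\Omega_{r_1,r_2}}q)(\inf_{(\beta M(r_1;u),M(r_2;u))}g)\int\eta^\tau w\,dx$. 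I would then apply Young's inequality with conjugate exponents $\alpha/(p-1)$ and $\alpha/(\alpha-p+1)$,
$$
|D\eta||Du|^{p-1}\;\le\;\epsilon B|Du|^\alpha+C_\epsilon B^{-(p-1)/(\alpha-p+1)}|D\eta|^{\alpha/(\alpha-p+1)},
$$
where $B:=\esssup_{\Omega_{r_1,r_2}}b$. The choice $\tau=\alpha/(\alpha-p+1)$ is engineered precisely so that after multiplication by $\eta^{\tau-1}$ the $\eta$-power of the residual reduces to $\eta^{0}=1$, avoiding divergent negative weights. For $\epsilon$ small enough the first piece is absorbed into $B\int|Du|^\alpha\eta^\tau w\,dx$, leaving
$$
CB\!\int\!|Du|^\alpha\eta^\tau w\,dx+CB^{-(p-1)/(\alpha-p+1)}\!\int\!w\,|D\eta|^{\alpha/(\alpha-p+1)}\,dx\;\ge\;QG\!\int\!\eta^\tau w\,dx.
$$

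The final step is to read off the geometric factors. The bound $|D\eta|\le c_0/(r_2-r_1)$ gives the residual a factor $(r_2-r_1)^{-\alpha/(\alpha-p+1)}$, while a Hölder-type comparison (or a radial integration argument using that $w$ rises from $0$ to a fraction of $M(r_2;u)-k$ on a path of length $\lesssim r_2-r_1$) converts $\int|Du|^\alpha\eta^\tau w\,dx$ into a quantity of order $(\delta/(r_2-r_1))^\alpha\int\eta^\tau w\,dx$, where $\delta=M(r_2;u)-M(r_1;u)$. The hypothesis $\delta^{\alpha-p+1}(r_2-r_1)^{p-\alpha}B\ge\lambda$ is then exactly what is needed to ensure that the residual term is dominated by the main one; rearrangement then yields
$$
(M(r_2;u)-M(r_1;u))^\alpha\esssup_{\Omega_{r_1,r_2}}b\;\ge\;C(r_2-r_1)^\alpha\essinf_{\Omega_{r_1,r_2}}q\,\inf_{(\beta M(r_1;u),M(r_2;u))}g.
$$
The main obstacle I anticipate is the Young bookkeeping: the conjugate exponent must be chosen so that no negative powers of $\eta$ appear in the residual (forcing $\tau=\alpha/(\alpha-p+1)$), and one must then verify that the residual coefficient $B^{-(p-1)/(\alpha-p+1)}|D\eta|^{\alpha/(\alpha-p+1)}$ is of precisely the form that the hypothesis $\delta^{\alpha-p+1}(r_2-r_1)^{p-\alpha}B\ge\lambda$ can dominate. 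A secondary technicality is the truncation $w=\min\{(u-k)_+,M(r_2;u)-k\}$, required so that the lower bound $g(u)\ge\inf_{(\beta M(r_1;u),M(r_2;u))}g$ is available on the support of the test function.
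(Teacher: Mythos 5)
The central step of your argument --- converting $B\int|Du|^\alpha\eta^\tau w\,dx$ into a quantity of order $B(\delta/(r_2-r_1))^\alpha\int\eta^\tau w\,dx$ by ``a H\"older-type comparison or a radial integration argument'' --- is exactly the point where the proof cannot be completed as sketched, and it is the whole difficulty of the large-$b$ regime. An upper bound of a weighted gradient integral by the oscillation quotient $\delta/(r_2-r_1)$ is a Caccioppoli-type estimate; but the Caccioppoli inequality for this operator (Lemma~\ref{l4.4}) is only available under the smallness condition $r^{p-\alpha}(\esssup v)^{\alpha-p+1}\esssup b\le C_1/2$, which is precisely the hypothesis that fails (indeed is reversed) in Lemma~\ref{l3.2}. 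Without it, $\int b|Du|^\alpha\varphi\,dx$ cannot be usefully controlled, and trying to control it by Caccioppoli is circular. There are further unaddressed issues: the hypotheses allow $M(r_1;u)=\beta M(r_2;u)$, in which case your test function $\eta^\tau w$ with $k=\beta M(r_2;u)$ and $\eta\equiv1$ on $B_{r_1}$ may vanish almost everywhere; the lower bound for $q$ is only over the annulus $\Omega_{r_1,r_2}$, not over $B_{r_2}$ where your test function is supported; and dividing out $\int\eta^\tau w\,dx$ at the end requires a quantitative lower bound on the measure of $\{u>k\}$ in the region where $\eta=1$, which is itself a nontrivial density estimate (the paper needs Lemmas~\ref{l4.5}--\ref{l4.7} for the analogous point).

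The paper's route is entirely different and is designed to break exactly this circularity. It replaces $u$ by $v=(u-\beta M(r_1;u))_+$, reduces to the annulus inequality with constant right-hand side $h$ (Lemma~\ref{l4.1}), and then invokes Lemma~\ref{l4.10}: using the selection Lemma~\ref{l4.9}, the interval $[r_1,r_2]$ is partitioned into subintervals $[\xi_i,\xi_{i+1}]$ on which the smallness condition of the ``small-$b$'' Lemma~\ref{l4.8} holds with near-equality $\varepsilon=\min\{\lambda,C_1/4\}$; on each piece Lemma~\ref{l4.8} gives $(M(\xi_{i+1};v)-M(\xi_i+0;v))^{p-1}\ge c(\xi_{i+1}-\xi_i)^ph$, and multiplying by the near-equality converts this into the $\alpha$-power form $(M(\xi_{i+1}+0;v)-M(\xi_i+0;v))\mu^{1/\alpha}\ge c(\xi_{i+1}-\xi_i)h^{1/\alpha}$, which telescopes over $i$. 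Your hypothesis $\delta^{\alpha-p+1}(r_2-r_1)^{p-\alpha}B\ge\lambda$ enters there to initialize and terminate the construction, not merely to ``dominate a residual''. Some such localization in the radial variable (until the oscillation is small enough for absorption) is indispensable; as written, your proof has a genuine gap at its main step.
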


The proof of Lemmas~\ref{l3.0}--\ref{l3.2} is given in Section~\ref{Proof_of_Lemmas}.

We note that ${M (\cdot; u)}$ is a non-decreasing function on the interval $(r_0, \infty)$
and, moreover, ${M (r - 0; u)} = {M (r; u)}$ for all $r > r_0$ 
(see Corollary~\ref{c4.2}, Section~\ref{Proof_of_Lemmas}).

In the proof of Lemma~\ref{l3.3}, by $c$ we denote various positive 
constants that can depend only on $n$, $p$, $\alpha$, $\eta$, $C_1$, and $C_2$.
In the proof of Lemma~\ref{l3.4}, analogous constants can also depend on $\tau$,
whereas in the proof of Lemma~\ref{l3.5}, they can depend only on
$n$, $p$, $\alpha$, $\theta$, $\sigma$, $C_1$, and $C_2$.

\begin{lemma}\label{l3.3}
Let 
$0 < M (r_1 + 0; u) \le \eta^{- 1 / 2} M (r_2; u)$ 
and 
$r_2 \le \tau r_1$
for some real numbers
$r_0 \le r_1 < r_2$, $\eta > 1$, and $\tau > 1$.
Then at least one of the following two inequalities is valid:
\begin{equation}
	\int_{
		M (r_1 + 0; u)
	}^{
		M (r_2; u)
	}
	(g_\eta (t) t)^{-1 / p}
	\,
	dt
	\ge
	C
	\int_{r_1}^{r_2}
	q_\tau^{1 / p}
	(\xi)
	\,
	d\xi,
	\label{l3.3.1}
\end{equation}
\begin{equation}
	\int_{
		M (r_1 + 0; u)
	}^{
		M (r_2; u)
	}
	g_\eta^{-1 / \alpha}
	(t)
	\,
	dt
	\ge
	C
	\int_{r_1}^{r_2}
	(\xi f_\tau (\xi))^{1 / (p - 1)}
	\,
	d\xi,
	\label{l3.3.2}
\end{equation}
where the constant $C > 0$ depends only on $n$, $p$, $\alpha$, $\eta$, $C_1$, and $C_2$.
\end{lemma}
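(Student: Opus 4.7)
The proof will proceed by subdividing $(r_1, r_2)$ into sub-annuli on which $M(\cdot;u)$ grows by a fixed factor $\eta^{1/2}$, and on each sub-annulus applying either Lemma~\ref{l3.1} or Lemma~\ref{l3.2} according to whether the coefficient $b$ is small or large there. Summing the resulting local estimates will then produce one of the two integral inequalities in the conclusion.

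To set up the telescoping sequence, I set $M_0 := M(r_1+0;u)$ and $M_j := \eta^{j/2} M_0$ for $j \ge 1$. Using the monotonicity and left continuity of $M(\cdot;u)$ (Corollary~\ref{c4.2}) together with the hypothesis $M_0 \le \eta^{-1/2} M(r_2;u)$, one can select $r_1 = \rho_0 < \rho_1 < \cdots < \rho_N \le r_2$ with $N \ge 1$ maximal and with $M(\rho_j;u)$ comparable to $M_j$ (up to the inevitable slack at jump points of $M$). The ratio $M(\rho_{j-1};u)/M(\rho_j;u)$ is then comparable to $\eta^{-1/2}$, so Lemmas~\ref{l3.1} and~\ref{l3.2} apply on each $[\rho_{j-1},\rho_j]$ with $\beta = \eta^{-1/2}$ (and an appropriate $\lambda \le C_1/4$); the hypothesis $r_2 \le \tau r_1$ ensures that each $\Omega_{\rho_{j-1},\rho_j}$ lies inside $\Omega_{\xi/\tau, \tau\xi}$ for $\xi \in [\rho_{j-1},\rho_j]$, so that $q_\tau(\xi)$ and $\esssup_{\Omega_{\rho_{j-1},\rho_j}} b$ enter the local estimates directly.

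On each shell one performs the dichotomy
$$
X_j := (M_j - M_{j-1})^{\alpha-p+1}(\rho_j - \rho_{j-1})^{p-\alpha} \esssup_{\Omega_{\rho_{j-1},\rho_j}} b \le C_1/4 \quad\text{(Case A)}
$$
or $X_j > \lambda$ (Case B). In Case A, Lemma~\ref{l3.1} rearranges (using $M_{j-1} \sim M_j$ and $g_\eta(t) \ge c\, g_\eta(M_j)$ on $[M_{j-1}, M_j]$) into
$$
(\rho_j - \rho_{j-1})\, q_\tau^{1/p}(\xi) \le C \int_{M_{j-1}}^{M_j} (g_\eta(t)\, t)^{-1/p}\, dt,
$$
while in Case B, Lemma~\ref{l3.2} gives an analogous bound involving $(q_\tau/\esssup b)^{1/\alpha}$; the strict inequality $X_j > \lambda$ forces the $b$-term in the denominator of $f_\tau$ to dominate the $1$, so that $(\xi f_\tau(\xi))^{1/(p-1)} \sim (q_\tau/\esssup b)^{1/\alpha}$ on the shell, giving
$$
(\rho_j - \rho_{j-1})\, (\xi f_\tau(\xi))^{1/(p-1)} \le C \int_{M_{j-1}}^{M_j} g_\eta(t)^{-1/\alpha}\, dt.
$$

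Summing over the index sets $J_1$ (Case A) and $J_2$ (Case B) yields partial integral estimates bounded respectively by the right-hand sides of~\eqref{l3.3.1} and~\eqref{l3.3.2}. The full integrals over $[r_1, r_2]$ will be controlled via the elementary two-sided comparison $(\xi f_\tau(\xi))^{1/(p-1)} \sim \min\{(\xi q_\tau(\xi))^{1/(p-1)},\, (q_\tau(\xi)/\esssup b)^{1/\alpha}\}$, which allows the "cross" contributions $\sum_{J_2}\int q_\tau^{1/p}\,d\xi$ and $\sum_{J_1}\int (\xi f_\tau)^{1/(p-1)}\,d\xi$ to be absorbed into the side corresponding to the dominant case; a final "choose the dominant side" step then delivers at least one of~\eqref{l3.3.1},~\eqref{l3.3.2}. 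The main obstacle I anticipate is precisely this cross-control in the mixed regime: the definition of $f_\tau$ is tailored to interpolate between the two cases, and verifying that all constants line up uniformly in $N$, depending only on $n, p, \alpha, \eta, C_1, C_2$, requires careful use of the geometric growth $M_j/M_{j-1} = \eta^{1/2}$ together with the doubling $r_2 \le \tau r_1$.
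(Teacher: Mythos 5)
Your shell decomposition (levels on which $M(\cdot;u)$ grows by the fixed factor $\eta^{1/2}$) followed by the per-shell dichotomy between Lemma~\ref{l3.1} and Lemma~\ref{l3.2} is exactly the paper's strategy; the one place where your plan is not yet a proof is the final combination step, and the mechanism you propose there is both unnecessary and partly incorrect. First, the claim that $X_j>\lambda$ ``forces the $b$-term in the denominator of $f_\tau$ to dominate the $1$'' does not follow: $X_j$ involves $M_j-M_{j-1}$ and $\rho_j-\rho_{j-1}$, not the combination $\xi\,q_\tau^{(\alpha-p+1)/\alpha}(\xi)\esssup b^{(p-1)/\alpha}$ appearing in~\eqref{1.3}. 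Fortunately only a one-sided bound is needed: dropping the $1$ in the denominator of~\eqref{1.3} gives $(\xi f_\tau(\xi))^{\alpha/(p-1)}\le q_\tau(\xi)\big/\esssup_{\Omega_{\xi/\tau,\tau\xi}}b$ unconditionally, and since $r_2\le\tau r_1$ implies $\Omega_{\rho_{j-1},\rho_j}\subset\Omega_{r_1,r_2}\subset\Omega_{\xi/\tau,\tau\xi}$ for every $\xi\in(r_1,r_2)$, this is in turn bounded by $\essinf_{\Omega_{\rho_{j-1},\rho_j}}q\big/\esssup_{\Omega_{\rho_{j-1},\rho_j}}b$, which is what Lemma~\ref{l3.2} delivers. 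Second, no ``cross-control'' or absorption of mixed terms is required. The paper simply splits on whether $\sum_{J_1}(\rho_j-\rho_{j-1})$ or $\sum_{J_2}(\rho_j-\rho_{j-1})$ is at least $(r_2-r_1)/2$. In the first case the Case-A sum already dominates $c\,\essinf_{\Omega_{r_1,r_2}}q^{1/p}\,(r_2-r_1)\ge c\int_{r_1}^{r_2}q_\tau^{1/p}(\xi)\,d\xi$, because $q_\tau(\xi)\le\essinf_{\Omega_{r_1,r_2}}q$ on $(r_1,r_2)$; in the second case the Case-B sum dominates $c\,\sup_{(r_1,r_2)}(\xi f_\tau(\xi))^{1/(p-1)}(r_2-r_1)\ge c\int_{r_1}^{r_2}(\xi f_\tau(\xi))^{1/(p-1)}\,d\xi$. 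This is the only point where $r_2\le\tau r_1$ is used, and it closes the argument with no interaction between the two index sets.

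One further detail you should address: since $M(\cdot;u)$ may jump, a shell of your partition can carry zero growth, i.e.\ $M(\rho_j;u)=M(\rho_{j-1}+0;u)$, and then neither Lemma~\ref{l3.1} nor Lemma~\ref{l3.2} applies, as both require strict growth. The paper assigns such shells to the Case-B set and invokes Lemma~\ref{l3.0} to conclude that $\essinf_{\Omega_{\rho_{j-1},\rho_j}}q=0$, so the local inequality holds trivially there. With these two repairs your outline coincides with the paper's proof.
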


\begin{proof}
Consider a finite sequence
of real numbers $\rho_k < \ldots < \rho_1 < \rho_0$
defined as follows.
We take $\rho_0 = r_2$. 
Assume that $\rho_i$ is already known. 
In the case of $\rho_i = r_1$, we take $k = i$ and stop; otherwise we put
$$
	\rho_{i+1}
	=
	\inf
	\{
		\xi \in (r_1, \rho_i)
		:
		M (\xi; u) > \eta^{- 1 / 2} M (\rho_i; u)
	\}.
$$
It can easily be seen that this procedure must terminate at a finite step.
In so doing, we have
\begin{equation}
	 \eta^{- 1 / 2} M (\rho_i; u) \le M (\rho_{i + 1} + 0; u),
	 \quad
	 i = 0, \dots, k - 1
	\label{pl3.3.2}
\end{equation}
and
\begin{equation}
	 M (\rho_{i + 1}; u) \le \eta^{- 1 / 2} M (\rho_i; u),
	 \quad
	 i = 0, \dots, k - 2.
	\label{pl3.3.3}
\end{equation}

Let $\Xi_1$ be the set of integers $i \in \{ 0, \ldots, k - 1 \}$ such that
$M (\rho_i; u) > M (\rho_{i + 1} + 0; u)$
and
\begin{equation}
	(M (\rho_i; u) - M (\rho_{i + 1} + 0; u))^{\alpha - p + 1}
	(\rho_i - \rho_{i + 1})^{p - \alpha}
	\esssup_{
		\Omega_{\rho_{i + 1}, \rho_i}
	}
	b
	\le
	\frac{C_1}{4}.
	\label{pl3.3.12}
\end{equation}
Also let $\Xi_2 = \{ 0, \ldots, k - 1 \} \setminus \Xi_1$.
By Lemma~\ref{l3.1},
\begin{align}
	(M (\rho_i; u) - M (\rho_{i + 1} + 0; u))^{p - 1}
	\ge
	{}
	&
	c
	(\rho_i - \rho_{i + 1})^p
	\essinf_{
		\Omega_{\rho_{i + 1}, \rho_i}
	}
	q
	\nonumber
	\\
	&
	\times
	\inf_{
		(\eta^{- 1 / 2} M (\rho_{i + 1} + 0; u), M (\rho_i; u))
	}
	g
	\label{pl3.3.4}
\end{align}
for all $i \in \Xi_1$.
In turn, Lemmas~\ref{l3.0} and~\ref{l3.2} imply the inequality
\begin{align}
	(M (\rho_i; u) - M (\rho_{i + 1} + 0; u))^\alpha
	\esssup_{
		\Omega_{\rho_{i + 1}, \rho_i}
	}
	b
	\ge
	{}
	&
	c
	(\rho_i - \rho_{i + 1})^\alpha
	\essinf_{
		\Omega_{\rho_{i + 1}, \rho_i}
	}
	q
	\nonumber
	\\
	&
	\times
	\inf_{
		(\eta^{- 1 / 2} M (\rho_{i + 1} + 0; u), M (\rho_i; u))
	}
	g
	\label{pl3.3.5}
\end{align}
for all $i \in \Xi_2$.

Let us show that
\begin{equation}
	\int_{
		M (\rho_k + 0; u)
	}^{
		M (\rho_0; u)
	}
	(g_\eta (t) t)^{- 1 / p}
	\,
	dt
	\ge
	c
	\essinf_{
		\Omega_{r_1, r_2}
	}
	q^{1 / p}
	\sum_{
		i 
		\in 
		\Xi_1 
	}
	(\rho_i - \rho_{i + 1}).
	\label{pl3.3.6}
\end{equation}
Really, taking into account~\eqref{pl3.3.2}, we have
\begin{align*}
	\int_{
		\eta^{- 1 / 2} 
		M (\rho_i; u)
	}^{
		M (\rho_i; u)
	}
	(g_\eta (t) t)^{- 1 / p}
	\,
	dt
	\ge
	{}
	&
	c
	(M (\rho_i; u) - M (\rho_{i + 1} + 0; u))^{(p - 1) / p}
	\\
	&
	\times
	\sup_{
		(\eta^{- 1 / 2} M (\rho_{i + 1} + 0; u), M (\rho_i; u))
	}
	g^{- 1 / p}
\end{align*}
for all $i \in \Xi_1$.
Combining this with the inequality
\begin{align}
	&
	(M (\rho_i; u) - M (\rho_{i + 1} + 0; u))^{(p - 1) / p}
	\sup_{
		(\eta^{- 1 / 2} M (\rho_{i + 1} + 0; u), M (\rho_i; u))
	}
	g^{- 1 / p}
	\nonumber
	\\
	&
	{}
	\quad
	\ge
	c
	(\rho_i - \rho_{i + 1})
	\essinf_{
		\Omega_{\rho_{i + 1}, \rho_i}
	}
	q^{1 / p}
	\label{pl3.3.8}
\end{align}
which is a consequence of~\eqref{pl3.3.4}, we immediately obtain
$$
	\int_{
		\eta^{- 1 / 2} 
		M (\rho_i; u)
	}^{
		M (\rho_i; u)
	}
	(g_\eta (t) t)^{- 1 / p}
	\,
	dt
	\ge
	c
	(\rho_i - \rho_{i + 1})
	\essinf_{
		\Omega_{\rho_{i + 1}, \rho_i}
	}
	q^{1 / p}
$$
for all $i \in \Xi_1$.
By~\eqref{pl3.3.3}, for different $i \in {\{ 0, \ldots, k - 2 \}}$, 
the domains of integration for the integrals in the left-hand side of the last estimate
intersect in a set of zero measure.
Thus, summing this estimate over all 
$i \in \Xi_1 \cap \{ 0, \ldots, k - 2 \}$, 
one can conclude that
\begin{equation}
	\int_{
		M (\rho_{k - 1}; u)
	}^{
		M (\rho_0; u)
	}
	(g_\eta (t) t)^{- 1 / p}
	\,
	dt
	\ge
	c
	\essinf_{
		\Omega_{r_1, r_2}
	}
	q^{1 / p}
	\sum_{
		i 
		\in 
		\Xi_1 
		\cap
		\{ 0, \ldots, k - 2 \}
	}
	(\rho_i - \rho_{i + 1}).
	\label{pl3.3.9}
\end{equation}
If $k - 1 \not\in \Xi_1$, then~\eqref{pl3.3.9} obviously implies~\eqref{pl3.3.6}.
Consider the case of $k - 1 \in \Xi_1$. From~\eqref{pl3.3.2}, it follows that
\begin{align*}
	\int_{
		M (\rho_k + 0; u)
	}^{
		\eta^{1 / 2} 
		M (\rho_k + 0; u)
	}
	(g_\eta (t) t)^{- 1 / p}
	\,
	dt
	\ge
	{}
	&
	c
	(M (\rho_{k - 1}; u) - M (\rho_k + 0; u))^{(p - 1) / p}
	\\
	&
	\times
	\sup_{
		(\eta^{- 1 / 2} M (\rho_k  + 0; u), M (\rho_{k - 1}; u))
	}
	g^{- 1 / p}.
\end{align*}
Combining this with inequality~\eqref{pl3.3.8}, where $i = k - 1$, we have
$$
	\int_{
		M (\rho_k + 0; u)
	}^{
		\eta^{1 / 2} 
		M (\rho_k + 0; u)
	}
	(g_\eta (t) t)^{- 1 / p}
	\,
	dt
	\ge
	c
	(\rho_{k - 1} - \rho_k)
	\essinf_{
		\Omega_{\rho_k, \rho_{k - 1}}
	}
	q^{1 / p}.
$$
Finally, summing the last expression and~\eqref{pl3.3.9}, we obtain~\eqref{pl3.3.6}.

Further, let us show that
\begin{equation}
	\int_{
		M (\rho_k + 0; u)
	}^{
		M (\rho_0; u)
	}
	g_\eta^{- 1 / \alpha} (t)
	\,
	dt
	\ge
	c
	\sup_{
		r \in (r_1, r_2)
	}
	(r f_\tau (r))^{1 / (p - 1)}
	\sum_{
		i 
		\in 
		\Xi_2 
	}
	(\rho_i - \rho_{i + 1}).
	\label{pl3.3.10}
\end{equation}
Taking into account~\eqref{pl3.3.5}, we have
\begin{align}
	&
	(M (\rho_i; u) - M (\rho_{i + 1} + 0; u))
	\sup_{
		(\eta^{- 1 / 2} M (\rho_{i + 1} + 0; u), M (\rho_i; u))
	}
	g^{- 1 / \alpha}
	\nonumber
	\\
	&
	{}
	\quad
	\ge
	c
	(\rho_i - \rho_{i + 1})
	\sup_{
		r \in (r_1, r_2)
	}
	(r f_\tau (r))^{1 / (p - 1)}
	\label{pl3.3.7}
\end{align}
for all $i \in \Xi_2$.
Really, if
$$
	\esssup_{
		\Omega_{\rho_{i + 1}, \rho_i}
	}
	b
	=
	0,
$$
then 
$$
	\essinf_{
		\Omega_{\rho_{i + 1}, \rho_i}
	}
	q
	=
	0
$$
by~\eqref{pl3.3.5}. 
Hence, $f_\tau (r) = 0$ for all $r \in (\rho_{i + 1}, \rho_i)$ and~\eqref{pl3.3.7} is evident.
Now, let
$$
	\esssup_{
		\Omega_{\rho_{i + 1}, \rho_i}
	}
	b
	>
	0,
$$
then~\eqref{pl3.3.5} implies the estimate
\begin{align*}
	&
	(M (\rho_i; u) - M (\rho_{i + 1} + 0; u))
	\sup_{
		(\eta^{- 1 / 2} M (\rho_{i + 1} + 0; u), M (\rho_i; u))
	}
	g^{- 1 / \alpha}
	\nonumber
	\\
	&
	{}
	\quad
	\ge
	c
	(\rho_i - \rho_{i + 1})
	\left(
		\frac{
			\essinf_{
				\Omega_{\rho_{i + 1}, \rho_i}
			}
			q
		}{
			\esssup_{
				\Omega_{\rho_{i + 1}, \rho_i}
			}
			b
		}
	\right)^{ 1 / \alpha}
\end{align*}
and, to establish the validity of~\eqref{pl3.3.7}, it remains to note that
$$
	\frac{
		\essinf_{
			\Omega_{\rho_{i + 1}, \rho_i}
		}
		q
	}{
		\esssup_{
			\Omega_{\rho_{i + 1}, \rho_i}
		}
		b
	}
	\ge
	\sup_{
		r \in (r_1, r_2)
	}
	(r f_\tau (r))^{\alpha / (p - 1)}.
$$
In turn, combining~\eqref{pl3.3.7} with the inequality
\begin{align*}
	\int_{
		M (\rho_{i + 1} + 0; u)
	}^{
		M (\rho_i; u)
	}
	g_\eta^{- 1 / \alpha} (t)
	\,
	dt
	\ge
	{}
	&
	(M (\rho_i; u) - M (\rho_{i + 1} + 0; u))
	\\
	&
	\times
	\sup_{
		(\eta^{- 1 / 2} M (\rho_{i + 1} + 0; u), M (\rho_i; u))
	}
	g^{- 1 / \alpha}
\end{align*}
which follows from~\eqref{pl3.3.2}, we have
\begin{equation}
	\int_{
		M (\rho_{i + 1} + 0; u)
	}^{
		M (\rho_i; u)
	}
	g_\eta^{- 1 / \alpha} (t)
	\,
	dt
	\ge
	c
	(\rho_i - \rho_{i + 1})
	\sup_{
		r \in (r_1, r_2)
	}
	(r f_\tau (r))^{1 / (p - 1)}
	\label{pl3.3.13}
\end{equation}
for all $i \in \Xi_2$. 
The last inequality obviously implies~\eqref{pl3.3.10}.

In the case of
\begin{equation}
	\sum_{
		i 
		\in 
		\Xi_1
	}
	(\rho_i - \rho_{i + 1})
	\ge
	\frac{r_2 - r_1}{2},
	\label{pl3.3.11}
\end{equation}
estimate~\eqref{pl3.3.6} allows us to establish the validity of~\eqref{l3.3.1}.
On the other hand, if~\eqref{pl3.3.11} is not valid, then 
$$
	\sum_{
		i 
		\in 
		\Xi_2
	}
	(\rho_i - \rho_{i + 1})
	\ge
	\frac{r_2 - r_1}{2}
$$
and, using~\eqref{pl3.3.10}, we immediately obtain~\eqref{l3.3.2}.

The proof is completed.
\end{proof}

\begin{lemma}\label{l3.4}
Let $0 < M (r_2; u) \le \eta^{1 / 2} M (r_1 + 0; u)$ 
and 
$\tau^{1 / 2} r_1 \le r_2$
for some real numbers
$r_0 \le r_1 < r_2$, $\eta > 1$, and $\tau > 1$.
Then either estimate~\eqref{l3.3.2} holds or
\begin{equation}
	\int_{
		M (r_1 + 0; u)
	}^{
		M (r_2; u)
	}
	g_\eta^{-1 / (p - 1)}
	(t)
	\,
	dt
	\ge
	C
	\int_{r_1}^{r_2}
	(\xi f_\tau (\xi))^{1 / (p - 1)}
	\,
	d\xi,
	\label{l3.4.1}
\end{equation}
where the constant $C > 0$ depends only on $n$, $p$, $\alpha$, $\eta$, $\tau$, $C_1$, and $C_2$.
\end{lemma}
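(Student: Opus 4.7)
Since $M(\cdot;u)$ grows by at most a factor of $\eta^{1/2}$ between $r_1$ and $r_2$, the iterative construction of Lemma~\ref{l3.3} collapses to a single step. The plan is therefore to apply Lemma~\ref{l3.1} or~\ref{l3.2} directly to the pair $(r_1',r_2)$ with $r_1'$ just to the right of $r_1$, and then pass to the limit $r_1'\downarrow r_1$. The complementary hypothesis $\tau^{1/2}r_1\le r_2$ yields $r_2-r_1\ge(1-\tau^{-1/2})r_2$, which is exactly what is needed to absorb an extra factor $r_2^{1/(p-1)}$ arising from the $\xi$ inside $\xi f_\tau(\xi)$.

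The trivial case $M(r_1+0;u)=M(r_2;u)$ is handled first: monotonicity then forces $M(\cdot;u)$ to be constant on $(r_1,r_2]$, and Lemma~\ref{l3.0} applied to $(r_1+\varepsilon,r_2)$ and sent to $\varepsilon\downarrow 0$ yields $\essinf_{\Omega_{r_1,r_2}}q=0$; hence $q_\tau\equiv f_\tau\equiv 0$ on $(r_1,r_2)$ and both sides of~\eqref{l3.4.1} vanish. Assume now $M(r_1+0;u)<M(r_2;u)$; then for $r_1'>r_1$ close to $r_1$ one has $\eta^{-1/2}M(r_2;u)\le M(r_1';u)<M(r_2;u)$, so the size hypothesis of Lemmas~\ref{l3.1}--\ref{l3.2} with $\beta=\eta^{-1/2}$ is in force. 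Depending on whether $J(r_1'):=(M(r_2;u)-M(r_1';u))^{\alpha-p+1}(r_2-r_1')^{p-\alpha}\esssup_{\Omega_{r_1',r_2}}b$ is $\le C_1/4$ along some sequence $r_1'\downarrow r_1$, or strictly exceeds $C_1/4$ on an entire right neighborhood of $r_1$, I apply Lemma~\ref{l3.1} or Lemma~\ref{l3.2} (with $\lambda=C_1/4$) at each such $r_1'$.

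Letting $r_1'\to r_1^+$, the monotone convergence of $\essinf_{\Omega_{r_1',r_2}}q$, $\esssup_{\Omega_{r_1',r_2}}b$, and $\inf_{(\eta^{-1/2}M(r_1';u),M(r_2;u))}g$ over the nested domains and intervals (together with $M(r_1';u)\downarrow M(r_1+0;u)$ and continuity of $g$) produces, respectively, $(M(r_2;u)-M(r_1+0;u))^{p-1}\ge C(r_2-r_1)^p\essinf_{\Omega_{r_1,r_2}}q\cdot\inf_I g$ or its analogue with exponent $\alpha$ in place of $p-1$ and an extra factor $\esssup_{\Omega_{r_1,r_2}}b$ on the left, where $I=(\eta^{-1/2}M(r_1+0;u),M(r_2;u))$. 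The hypothesis $M(r_2;u)\le\eta^{1/2}M(r_1+0;u)$ implies $I\subset(t/\eta,\eta t)$ for every $t\in(M(r_1+0;u),M(r_2;u))$, hence $g_\eta(t)\le\inf_I g$ on this range; multiplying by the length $M(r_2;u)-M(r_1+0;u)$ then gives lower bounds $\int g_\eta^{-1/(p-1)}(t)\,dt\ge C(r_2-r_1)^{p/(p-1)}(\essinf_{\Omega_{r_1,r_2}}q)^{1/(p-1)}$ and $\int g_\eta^{-1/\alpha}(t)\,dt\ge C(r_2-r_1)(\essinf_{\Omega_{r_1,r_2}}q/\esssup_{\Omega_{r_1,r_2}}b)^{1/\alpha}$, respectively. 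The pointwise estimates $(\xi f_\tau(\xi))^{1/(p-1)}\le(\essinf_{\Omega_{r_1,r_2}}q/\esssup_{\Omega_{r_1,r_2}}b)^{1/\alpha}$ and $(\xi f_\tau(\xi))^{1/(p-1)}\le r_2^{1/(p-1)}(\essinf_{\Omega_{r_1,r_2}}q)^{1/(p-1)}$ valid on $(r_1,r_2)$, combined with $(r_2-r_1)^{1/(p-1)}\ge(1-\tau^{-1/2})^{1/(p-1)}r_2^{1/(p-1)}$ in the second case, then produce~\eqref{l3.3.2} and~\eqref{l3.4.1} respectively.

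The main obstacle is the careful limit passage as $r_1'\downarrow r_1$---ensuring every monotone quantity converges to the anticipated limit on the correct side, so that the one-sided inequality from Lemma~\ref{l3.1} or~\ref{l3.2} survives---and the bookkeeping showing that the extra $r_2^{1/(p-1)}$ in the right-hand integrand is indeed controlled by the ratio hypothesis $\tau^{1/2}r_1\le r_2$, which is the only place this hypothesis is used.
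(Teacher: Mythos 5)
There is a genuine gap, and it lies in the very first sentence of your plan. You treat the hypothesis $\tau^{1/2} r_1 \le r_2$ as if it bounded the interval, but it is a \emph{lower} bound on the ratio $r_2/r_1$ with no upper bound at all: the annulus $\Omega_{r_1,r_2}$ can be multiplicatively arbitrarily long. (You appear to have carried over the hypothesis $r_2\le\tau r_1$ of Lemma~\ref{l3.3}, which points the other way.) Consequently the pointwise estimates you invoke at the end, namely $(\xi f_\tau(\xi))^{1/(p-1)}\le(\essinf_{\Omega_{r_1,r_2}}q/\esssup_{\Omega_{r_1,r_2}}b)^{1/\alpha}$ and $(\xi f_\tau(\xi))^{1/(p-1)}\le r_2^{1/(p-1)}(\essinf_{\Omega_{r_1,r_2}}q)^{1/(p-1)}$, are false in general: they require $q_\tau(\xi)=\essinf_{\Omega_{\xi/\tau,\tau\xi}}q\le\essinf_{\Omega_{r_1,r_2}}q$ and $\esssup_{\Omega_{\xi/\tau,\tau\xi}}b\ge\esssup_{\Omega_{r_1,r_2}}b$, i.e.\ the containment $\Omega_{r_1,r_2}\subset\Omega_{\xi/\tau,\tau\xi}$, which fails as soon as $r_2>\tau^2 r_1$ (for $\xi$ near $r_1$ one has $\tau\xi<r_2$). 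Concretely, if $q$ vanishes on a thin sub-annulus near $|x|=(r_1+r_2)/2$ but is large elsewhere, then $\essinf_{\Omega_{r_1,r_2}}q=0$ and your lower bounds degenerate to $0$, while $f_\tau(\xi)$ vanishes only for $\xi$ in a sub-interval of bounded multiplicative width, so $\int_{r_1}^{r_2}(\xi f_\tau(\xi))^{1/(p-1)}\,d\xi$ can be as large as one pleases. A single application of Lemma~\ref{l3.1} or~\ref{l3.2} on the whole of $(r_1,r_2)$ therefore cannot prove \eqref{l3.4.1}.

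The fix is the paper's: partition $(r_1,r_2)$ radially into pieces $\rho_{i+1}<\rho_i$ with $\tau^{1/2}\rho_{i+1}\le\rho_i\le\tau\rho_{i+1}$ (take $\rho_i=\tau^{-i/2}r_2$ and $\rho_k=r_1$), so that on each piece $\Omega_{\rho_{i+1},\rho_i}\subset\Omega_{\xi/\tau,\tau\xi}$ for every $\xi\in(\rho_{i+1},\rho_i)$ and your pointwise comparisons become valid locally. On each piece apply the dichotomy of Lemmas~\ref{l3.1}/\ref{l3.2} with $\beta=\eta^{-1/2}$ (the function-value hypothesis $M(r_2;u)\le\eta^{1/2}M(r_1+0;u)$ does propagate to every sub-piece, so that part of your argument survives), obtaining on $\Xi_1$-pieces a lower bound for $\int g_\eta^{-1/(p-1)}$ and on $\Xi_2$-pieces one for $\int g_\eta^{-1/\alpha}$, each dominating $c\int_{\rho_{i+1}}^{\rho_i}(\xi f_\tau(\xi))^{1/(p-1)}\,d\xi$. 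Since the sub-integrals over the $M$-ranges are essentially disjoint, summing and observing that either $\Xi_1$ or $\Xi_2$ carries at least half of $\int_{r_1}^{r_2}(\xi f_\tau(\xi))^{1/(p-1)}\,d\xi$ yields \eqref{l3.4.1} or \eqref{l3.3.2}. Your handling of the degenerate case $M(r_1+0;u)=M(r_2;u)$ via Lemma~\ref{l3.0} is also insufficient for the same reason ($\essinf_{\Omega_{r_1,r_2}}q=0$ does not force $f_\tau\equiv0$ on $(r_1,r_2)$ when the annulus is long); this case must likewise be treated piecewise, or simply absorbed into the general scheme by allowing $M(\rho_i;u)=M(\rho_{i+1}+0;u)$ and invoking Lemma~\ref{l3.0} on the individual piece.
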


\begin{proof}
We take the maximal integer $k$ such that $\tau^{k / 2} r_1 \le r_2$. 
By definition, put $\rho_i = \tau^{- i / 2} r_2$, $i = 0, \ldots, k - 1$, and $\rho_k = r_1$.
We obviously have 
$$
	\tau^{1 / 2} 
	\rho_{i + 1} 
	\le 
	\rho_i 
	\le 
	\tau 
	\rho_{i + 1},
	\quad
	i = 0, \ldots, k - 1.
$$
As in the proof of Lemma~\ref{l3.3}, let $\Xi_1$ be the set of integers 
$i \in {\{ 0, \ldots, k - 1 \}}$ such that 
$M (\rho_i; u) > M (\rho_{i + 1} + 0; u)$ and, moreover, 
condition~\eqref{pl3.3.12} is fulfilled.
Also denote $\Xi_2 = {\{ 0, \ldots, k - 1 \}} \setminus \Xi_1$.

From Lemma~\ref{l3.1}, it follows that
\begin{align*}
	&
	(M (\rho_i; u) - M (\rho_{i + 1} + 0; u))
	\sup_{
		(\eta^{- 1 / 2} M (\rho_{i + 1} + 0; u), M (\rho_i; u))
	}
	g^{- 1 / (p - 1)}
	\nonumber
	\\
	&
	{}
	\quad
	\ge
	c
	(\rho_i - \rho_{i + 1})^{p / (p - 1)}
	\essinf_{
		\Omega_{\rho_{i + 1}, \rho_i}
	}
	q^{1 / (p - 1)}
\end{align*}
for all $i \in \Xi_1$. 
Combining this with the evident inequalities
\begin{align*}
	\int_{
		M (\rho_{i + 1} + 0; u)
	}^{
		M (\rho_i; u)
	}
	g_\eta^{- 1 / (p - 1)} (t)
	\,
	dt
	\ge
	{}
	&
	(M (\rho_i; u) - M (\rho_{i + 1} + 0; u))
	\\
	&
	\times
	\sup_{
		(\eta^{- 1 / 2} M (\rho_{i + 1} + 0; u), M (\rho_i; u))
	}
	g^{- 1 / (p - 1)}
\end{align*}
and
$$
	(\rho_i - \rho_{i + 1})^{p / (p - 1)}
	\essinf_{
		\Omega_{\rho_{i + 1}, \rho_i}
	}
	q^{1 / (p - 1)}
	\ge
	c
	\int_{
		\rho_{i + 1}
	}^{
		\rho_i
	}
	(\xi f_\tau (\xi))^{1 / (p - 1)}
	\,
	d\xi,
$$
we obtain
$$
	\int_{
		M (\rho_{i + 1} + 0; u)
	}^{
		M (\rho_i; u)
	}
	g_\eta^{- 1 / (p - 1)} (t)
	\,
	dt
	\ge
	c
	\int_{
		\rho_{i + 1}
	}^{
		\rho_i
	}
	(\xi f_\tau (\xi))^{1 / (p - 1)}
	\,
	d\xi.
$$
Summing the last inequality over all $i \in \Xi_1$, one can conclude that
\begin{equation}
	\int_{
		M (\rho_{k} + 0; u)
	}^{
		M (\rho_0; u)
	}
	g_\eta^{- 1 / (p - 1)} (t)
	\,
	dt
	\ge
	c
	\sum_{i \in \Xi_1}
	\int_{
		\rho_{i + 1}
	}^{
		\rho_i
	}
	(\xi f_\tau (\xi))^{1 / (p - 1)}
	\,
	d\xi.
	\label{pl3.4.1}
\end{equation}
If
\begin{equation}
	\sum_{i \in \Xi_1}
	\int_{
		\rho_{i + 1}
	}^{
		\rho_i
	}
	(\xi f_\tau (\xi))^{1 / (p - 1)}
	\,
	d\xi
	\ge
	\frac{1}{2}
	\int_{
		r_1
	}^{
		r_2
	}
	(\xi f_\tau (\xi))^{1 / (p - 1)}
	\,
	d\xi
	\label{pl3.4.2}
\end{equation}
then~\eqref{pl3.4.1} immediately implies~\eqref{l3.4.1}.
Assume that~\eqref{pl3.4.2} is not valid, then
\begin{equation}
	\sum_{i \in \Xi_2}
	\int_{
		\rho_{i + 1}
	}^{
		\rho_i
	}
	(\xi f_\tau (\xi))^{1 / (p - 1)}
	\,
	d\xi
	\ge
	\frac{1}{2}
	\int_{
		r_1
	}^{
		r_2
	}
	(\xi f_\tau (\xi))^{1 / (p - 1)}
	\,
	d\xi.
	\label{pl3.4.3}
\end{equation}
Repeating the arguments given in the proof of inequality~\eqref{pl3.3.13}
with $r_1$ and $r_2$ replaced by $\rho_{i + 1}$ and $\rho_i$, respectively, 
we have
\begin{align*}
	\int_{
		M (\rho_{i + 1} + 0; u)
	}^{
		M (\rho_i; u)
	}
	g_\eta^{- 1 / \alpha} (t)
	\,
	dt
	&
	{}
	\ge
	c
	(\rho_i - \rho_{i + 1})
	\sup_{
		r \in (\rho_{i + 1}, \rho_i)
	}
	(r f_\tau (r))^{1 / (p - 1)}
	\\
	&
	\ge
	c
	\int_{
		\rho_{i + 1}
	}^{
		\rho_i
	}
	(\xi f_\tau (\xi))^{1 / (p - 1)}
	\,
	d\xi
\end{align*}
for all $i \in \Xi_2$, whence it follows that
$$
	\int_{
		M (\rho_{k} + 0; u)
	}^{
		M (\rho_0; u)
	}
	g_\eta^{- 1 / \alpha} (t)
	\,
	dt
	\ge
	c
	\sum_{i \in \Xi_2}
	\int_{
		\rho_{i + 1}
	}^{
		\rho_i
	}
	(\xi f_\tau (\xi))^{1 / (p - 1)}
	\,
	d\xi.
$$
Combining this with~\eqref{pl3.4.3} we obtain estimate~\eqref{l3.3.2}.

The proof is completed.
\end{proof}

\begin{lemma}\label{l3.5}
Let $M_1 \le M (r_1 + 0; u) \le M (r_2; u) \le M_2$,
$\sigma r_1 \le r_2$, and $\theta M_1 \le M_2$,
where $r_0 \le r_1 < r_2$, $0 < M_1 < M_2$, $\sigma > 1$, and $\theta > 1$ 
are some real numbers. Then
\begin{align}
	\left(
		\int_{M_1}^{M_2}
		(g_\theta (t) t)^{- 1 / p}
		\,
		dt
	\right)^{p / (p - 1)}
	&
	+
	\int_{M_1}^{M_2}
	g^{- 1 / \alpha}_\theta (t)
	\,
	dt
	\nonumber
	\\
	&
	\ge
	C
	\int_{r_1}^{r_2}
	(\xi f_\sigma (\xi))^{1 / (p - 1)}
	\,
	d\xi,
	\label{l3.5.1}
\end{align}
where the constant $C > 0$ depends only on 
$n$, $p$, $\alpha$, $\theta$, $\sigma$, $C_1$, and $C_2$.
\end{lemma}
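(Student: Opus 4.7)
The plan is to subdivide $[r_1,r_2]$ into finitely many subintervals on each of which either Lemma~\ref{l3.3} or Lemma~\ref{l3.4} applies, and then assemble the resulting local estimates into \eqref{l3.5.1}.

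I would construct the sequence $r_1 = \rho_0 < \rho_1 < \ldots < \rho_k = r_2$ greedily. Having defined $\rho_i < r_2$, set $\tilde\rho = \min(\sigma^{1/2}\rho_i, r_2)$ and put $\rho_{i+1}$ equal to the infimum of $\{r \in (\rho_i,\tilde\rho] : M(r;u) \ge \theta^{1/2} M(\rho_i+0;u)\}$ when this set is nonempty (\emph{Case A}) and $\rho_{i+1} = \tilde\rho$ otherwise (\emph{Case B}). Since each Case A step multiplies $M(\cdot;u)$ by at least $\theta^{1/2}$ and each Case B step multiplies $\rho$ by $\sigma^{1/2}$, only finitely many steps are needed before reaching $r_2$. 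On each subinterval I would then apply Lemma~\ref{l3.3} in Case A and Lemma~\ref{l3.4} in Case B with parameters $\eta = \theta$, $\tau = \sigma^{1/2}$; because $f_{\sigma^{1/2}} \ge f_\sigma$, the right-hand side always dominates $\int_{\rho_i}^{\rho_{i+1}}(\xi f_\sigma(\xi))^{1/(p-1)}d\xi$.

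Writing $a_i = M(\rho_i+0;u)$, $b_i = M(\rho_{i+1};u)$, the local outputs are one of \eqref{l3.3.1} (integral of $(g_\theta t)^{-1/p}$), \eqref{l3.3.2} (integral of $g_\theta^{-1/\alpha}$), or \eqref{l3.4.1} (integral of $g_\theta^{-1/(p-1)}$). Since $M(\cdot;u)$ is non-decreasing and left-continuous (see Corollary~\ref{c4.2}), the intervals $[a_i,b_i]$ are pairwise disjoint subsets of $[M_1,M_2]$ and the intervals $[\rho_i,\rho_{i+1}]$ tile $[r_1,r_2]$. I would partition the indices into three classes $E_1,E_2,E_3$ by which alternative is used, let $T_j = \sum_{i\in E_j}\int_{\rho_i}^{\rho_{i+1}}(\xi f_\sigma)^{1/(p-1)}d\xi$, note that $T_1+T_2+T_3 = \int_{r_1}^{r_2}(\xi f_\sigma)^{1/(p-1)}d\xi$, and treat the class with the largest $T_j$.

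For $E_2$ the bound is immediate from the second summand of the LHS of \eqref{l3.5.1}. For $E_1$, I would combine the superadditivity $(\sum_i x_i)^{p/(p-1)} \ge \sum_i x_i^{p/(p-1)}$ with the per-subinterval comparison
$$\left(\int_{\rho_i}^{\rho_{i+1}} q_{\sigma^{1/2}}^{1/p}\,d\xi\right)^{p/(p-1)} \ge c\int_{\rho_i}^{\rho_{i+1}}(\xi f_\sigma(\xi))^{1/(p-1)}\,d\xi,$$
which follows from $f_\sigma \le f_{\sigma^{1/2}} \le q_{\sigma^{1/2}}$, Jensen's inequality applied to the convex function $x \mapsto x^{p/(p-1)}$, and the bounded ratio $\rho_{i+1}/\rho_i \le \sigma^{1/2}$ which forces $\xi \asymp \rho_i$ across the subinterval. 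Together these absorb the $E_1$ contribution into the first summand of the LHS.

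The main obstacle is $E_3$, where the integrand $g_\theta^{-1/(p-1)}$ does not appear in \eqref{l3.5.1}. I would exploit that every $E_3$ subinterval is a Case B one, so $b_i/a_i \le \theta^{1/2}$ and $t \asymp a_i$ throughout $[a_i,b_i]$. Since $a_i \asymp b_i$ and $|b_i - a_i| \le (\theta^{1/2}-1)a_i$, a Jensen/H\"older comparison reduces $\int_{a_i}^{b_i} g_\theta^{-1/(p-1)}dt$ to a combination of $\int_{a_i}^{b_i} g_\theta^{-1/\alpha}dt$ and $(\int_{a_i}^{b_i}(g_\theta t)^{-1/p}dt)^{p/(p-1)}$ via a secondary split according to whether $g_\theta \gtrsim 1$ or $g_\theta \lesssim 1$ on the subinterval; the resulting constants will depend only on $\theta, p, \alpha$. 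This secondary dichotomy, together with the superadditivity argument used for $E_1$, completes the reduction of the $E_3$-sum to the LHS of \eqref{l3.5.1}. I expect this $E_3$ step to be the most delicate part, since the direction of Jensen's inequality on short $M$-intervals goes naturally the wrong way and a careful case analysis is required to obtain the correct $\eta = \theta$, $\tau = \sigma$ dependence in the final constant.
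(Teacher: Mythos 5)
Your high-level architecture (subdivide $[r_1,r_2]$, invoke Lemmas~\ref{l3.3} and~\ref{l3.4}, pigeonhole into three classes, use superadditivity of $x\mapsto x^{p/(p-1)}$) matches the paper's, but two of your steps have genuine gaps. First, the greedy subdivision is the wrong cut. The paper subdivides $[r_1,r_2]$ \emph{geometrically}, $\xi_i=\tau^{i/2}r_1$ with $\tau=\sigma^{1/2}$, so that every piece satisfies $\tau^{1/2}\xi_i\le\xi_{i+1}\le\tau\xi_i$; this simultaneously meets the radial hypotheses of both Lemma~\ref{l3.3} ($r_2\le\tau r_1$) and Lemma~\ref{l3.4} ($\tau^{1/2}r_1\le r_2$), so the $M$-growth dichotomy only decides \emph{which} lemma to apply, and it guarantees $\xi_{i+1}-\xi_i\ge(\tau^{1/2}-1)\xi_i\asymp\xi_{i+1}$, which is exactly what makes your $E_1$ comparison
$\bigl(\int_{\xi_i}^{\xi_{i+1}}q_\tau^{1/p}\,d\xi\bigr)^{p/(p-1)}\ge(\xi_{i+1}-\xi_i)^{p/(p-1)}\inf q_\tau^{1/(p-1)}\ge c\int_{\xi_i}^{\xi_{i+1}}(\xi f_\sigma)^{1/(p-1)}d\xi$
work. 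Your Case~A pieces can be radially arbitrarily short, and then $(\rho_{i+1}-\rho_i)^{p/(p-1)}$ loses a factor $(\rho_{i+1}-\rho_i)^{1/(p-1)}\ll\rho_{i+1}^{1/(p-1)}$ that superadditivity cannot recover (take $N$ equal short pieces: $\sum(\rho_{i+1}-\rho_i)^{p/(p-1)}\to0$ as $N\to\infty$ while the target stays fixed). Also, Jensen for the convex function $x^{p/(p-1)}$ goes the wrong way for the lower bound you need; the correct mechanism is the two-scale inf/sup comparison $\inf_{(\xi_i,\xi_{i+1})}q_\tau\ge\sup_{(\xi_i,\xi_{i+1})}q_\sigma\ge\sup f_\sigma$, valid precisely because $\xi_{i+1}/\xi_i\le\sigma^{1/2}$.

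The second and more serious gap is your treatment of $E_3$. A per-subinterval reduction of $\int_{a_i}^{b_i}g^{-1/(p-1)}dt$ to $\int_{a_i}^{b_i}g^{-1/\alpha}dt$ plus $\bigl(\int_{a_i}^{b_i}(gt)^{-1/p}dt\bigr)^{p/(p-1)}$ is impossible: when $g\lesssim1$ and $b_i-a_i\ll a_i$ one has $\bigl(\int_{a_i}^{b_i}(gt)^{-1/p}dt\bigr)^{p/(p-1)}\approx(b_i-a_i)^{p/(p-1)}(ga_i)^{-1/(p-1)}\ll(b_i-a_i)g^{-1/(p-1)}\approx\int_{a_i}^{b_i}g^{-1/(p-1)}dt$, and $\int g^{-1/\alpha}\le\int g^{-1/(p-1)}$ there since $\alpha\ge p-1$; no dichotomy on the size of $g_\theta$ repairs this. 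The paper instead \emph{first} sums the $\Xi_2$ estimates to get $\int_{M(r_1+0;u)}^{M(r_2;u)}g_\eta^{-1/(p-1)}\ge c\int_{r_1}^{r_2}(\xi f_\sigma)^{1/(p-1)}d\xi$ and \emph{then} proves the global comparison $\bigl(\int_{M_1}^{M_2}(g_\theta(t)t)^{-1/p}dt\bigr)^{p/(p-1)}\ge c\int_{M_1}^{M_2}g_\eta^{-1/(p-1)}(t)\,dt$ by cutting $[M_1,M_2]$ geometrically with ratio $\eta^{1/2}$, so that each piece has $t_{i+1}-t_i\asymp t_{i+1}$. This is the only place the hypothesis $\theta M_1\le M_2$ enters (your proposal never uses it, which is a warning sign), and it forces the choice $\eta=\theta^{1/2}$ rather than your $\eta=\theta$: on a multiplicative interval of ratio $\le\eta$ one has $\sup_{(t_i,t_{i+1})}g_\theta\le\inf_{(t_i,t_{i+1})}g_\eta$, which is the slack that lets the inf from the $(g_\theta t)^{-1/p}$ integral dominate the integrand $g_\eta^{-1/(p-1)}$ pointwise on that piece. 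With $\eta=\theta$ there is no such room. Both gaps are structural rather than cosmetic, so the proof as proposed does not go through.
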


\begin{proof}
We denote $\eta = \theta^{1 / 2}$ and $\tau = \sigma^{1 / 2}$.
Let $k$ be the maximal integer satisfying the condition $\tau^{k / 2} r_1 \le r_2$.
We put $\xi_i = \tau^{i / 2} r_1$, $i = 0, \ldots, k - 1$, and $\xi_k = r_2$.
It is obvious that
\begin{equation}
	\tau^{1 / 2} 
	\xi_i 
	\le 
	\xi_{i + 1} 
	\le 
	\tau 
	\xi_i,
	\quad
	i = 0, \ldots, k - 1.
	\label{pl3.5.7}
\end{equation}
In addition, for any $i \in \{ 0, \ldots, k - 1 \}$ at least one 
of the following three inequalities holds:
\begin{equation}
	\int_{
		M (\xi_i + 0; u)
	}^{
		M (\xi_{i + 1}; u)
	}
	(g_\eta (t) t)^{-1 / p}
	\,
	dt
	\ge
	c
	\int_{
		\xi_i
	}^{
		\xi_{i + 1}
	}
	q_\tau^{1 / p}
	(\xi)
	\,
	d\xi,
	\label{pl3.5.1}
\end{equation}
\begin{equation}
	\int_{
		M (\xi_i + 0; u)
	}^{
		M (\xi_{i + 1}; u)
	}
	g_\eta^{-1 / (p - 1)}
	(t)
	\,
	dt
	\ge
	c
	\int_{
		\xi_i
	}^{
		\xi_{i + 1}
	}
	(\xi f_\tau (\xi))^{1 / (p - 1)}
	\,
	d\xi,
	\label{pl3.5.2}
\end{equation}
\begin{equation}
	\int_{
		M (\xi_i + 0; u)
	}^{
		M (\xi_{i + 1}; u)
	}
	g_\eta^{-1 / \alpha}
	(t)
	\,
	dt
	\ge
	c
	\int_{
		\xi_i
	}^{
		\xi_{i + 1}
	}
	(\xi f_\tau (\xi))^{1 / (p - 1)}
	\,
	d\xi.
	\label{pl3.5.3}
\end{equation}
This follows from Lemma~\ref{l3.3} if 
$\eta^{1 / 2} M (\xi_i + 0; u) \le M (\xi_{i + 1}; u)$
or from Lemma~\ref{l3.4} otherwise.

By $\Xi_1$, $\Xi_2$, and $\Xi_3$ we denote the sets of integers $i \in \{ 0, \ldots, k - 1 \}$ 
satisfying relations~\eqref{pl3.5.1}, \eqref{pl3.5.2}, and~\eqref{pl3.5.3}, respectively.
Let
\begin{equation}
	\sum_{i \in \Xi_3}
	\int_{
		\xi_i
	}^{
		\xi_{i + 1}
	}
	(\xi f_\sigma (\xi))^{1 / (p - 1)}
	\,
	d\xi
	\ge
	\frac{1}{3}
	\int_{
		r_1
	}^{
		r_2
	}
	(\xi f_\sigma (\xi))^{1 / (p - 1)}
	\,
	d\xi.
	\label{pl3.5.4}
\end{equation}
Summing~\eqref{pl3.5.3} over all $i \in \Xi_3$, we obtain
$$
	\int_{
		M (r_1 + 0; u)
	}^{
		M (r_2; u)
	}
	g_\eta^{-1 / \alpha}
	(t)
	\,
	dt
	\ge
	c
	\sum_{
		i \in \Xi_3
	}
	\int_{
		\xi_i
	}^{
		\xi_{i + 1}
	}
	(\xi f_\tau (\xi))^{1 / (p - 1)}
	\,
	d\xi.
$$
The last inequality and~\eqref{pl3.5.4} obviously imply~\eqref{l3.5.1}
since $g_\tau (t) \ge g_\theta (t)$ for all $t > 0$ and, moreover,
$f_\tau (\xi) \ge f_\sigma (\xi)$ for all $\xi \in (r_1, r_2)$.
Assume that~\eqref{pl3.5.4} does not hold. 
In this case, we have either
\begin{equation}
	\sum_{i \in \Xi_1}
	\int_{
		\xi_i
	}^{
		\xi_{i + 1}
	}
	(\xi f_\sigma (\xi))^{1 / (p - 1)}
	\,
	d\xi
	\ge
	\frac{1}{3}
	\int_{
		r_1
	}^{
		r_2
	}
	(\xi f_\sigma (\xi))^{1 / (p - 1)}
	\,
	d\xi
	\label{pl3.5.5}
\end{equation}
or
\begin{equation}
	\sum_{i \in \Xi_2}
	\int_{
		\xi_i
	}^{
		\xi_{i + 1}
	}
	(\xi f_\sigma (\xi))^{1 / (p - 1)}
	\,
	d\xi
	\ge
	\frac{1}{3}
	\int_{
		r_1
	}^{
		r_2
	}
	(\xi f_\sigma (\xi))^{1 / (p - 1)}
	\,
	d\xi.
	\label{pl3.5.6}
\end{equation}
At first, let~\eqref{pl3.5.5} be valid. Taking into account~\eqref{pl3.5.7}, we obtain
\begin{align*}
	\left(
		\sum_{i \in \Xi_1}
		\int_{
			\xi_i
		}^{
			\xi_{i + 1}
		}
		q_\tau^{1 / p}
		(\xi)
		\,
		d\xi
	\right)^{p / (p - 1)}
	&
	\ge
	\sum_{i \in \Xi_1}
	\left(
		\int_{
			\xi_i
		}^{
			\xi_{i + 1}
		}
		q_\tau^{1 / p}
		(\xi)
		\,
		d\xi
	\right)^{p / (p - 1)}
	\\
	&
	\ge
	\sum_{i \in \Xi_1}
	(\xi_{i + 1} - \xi_i)^{p / (p - 1)}
	\inf_{
		(\xi_{i + 1}, \xi_i)
	}
	q_\tau^{1 / (p - 1)}
	\\
	&
	\ge
	c
	\sum_{i \in \Xi_1}
	\int_{
		\xi_i
	}^{
		\xi_{i + 1}
	}
	(\xi f_\sigma (\xi))^{1 / (p - 1)}
	\,
	d\xi.
\end{align*}
By~\eqref{pl3.5.1} and~\eqref{pl3.5.5}, this yields the estimate
$$
	\left(
		\int_{
			M (r_1 + 0; u)
		}^{
			M_(r_2; u)
		}
		(g_\eta (t) t)^{- 1 / p}
		\,
		dt
	\right)^{p / (p - 1)}
	\ge
	c
	\int_{r_1}^{r_2}
	(\xi f_\sigma (\xi))^{1 / (p - 1)}
	\,
	d\xi,
$$
whence~\eqref{l3.5.1} follows at once.

Now, let~\eqref{pl3.5.6} hold. Then, summing~\eqref{pl3.5.2} 
over all $i \in \Xi_2$, we conclude that
\begin{equation}
	\int_{
		M (r_1 + 0; u)
	}^{
		M (r_2; u)
	}
	g_\eta^{-1 / (p - 1)}
	(t)
	\,
	dt
	\ge
	c
	\int_{
		r_1
	}^{
		r_2
	}
	(\xi f_\sigma (\xi))^{1 / (p - 1)}
	\,
	d\xi.
	\label{pl3.5.8}
\end{equation}
Take the maximal integer $l$  satisfying the condition $\eta^{l / 2} M_1 \le M_2$.
We denote $t_i = \eta^{i / 2} M_1$, $i = 0, \ldots, l - 1$, and $t_l = M_2$. 
It can easily be seen that
$$
	\eta^{1 / 2} t_i \le t_{i + 1} \le \eta t_i,
	\quad
	i = 0, \ldots, l - 1.
$$
We have
\begin{align*}
	\left(
		\int_{
			M_1
		}^{
			M_2
		}
		(g_\theta (t) t)^{- 1 / p}
		\,
		dt
	\right)^{p / (p - 1)}
	&
	=
	\left(
		\sum_{i = 0}^{l - 1}
		\int_{
			t_i
		}^{
			t_{i + 1}
		}
		(g_\theta (t) t)^{- 1 / p}
		\,
		dt
	\right)^{p / (p - 1)}
	\\
	&
	\ge
	\sum_{i = 0}^{l - 1}
	\left(
		\int_{
			t_i
		}^{
			t_{i + 1}
		}
		(g_\theta (t) t)^{- 1 / p}
		\,
		dt
	\right)^{p / (p - 1)}
	\\
	&
	\ge
	\sum_{i = 0}^{l - 1}
	(t_{i + 1} - t_i)^{p / (p - 1)}
	t_{i + 1}^{- 1 / (p - 1)}
	\inf_{
		(t_i, t_{i + 1})
	}
	g_\theta^{- 1 / (p - 1)},
\end{align*}
whence in accordance with the inequality 
$$
	(t_{i + 1} - t_i)^{p / (p - 1)}
	t_{i + 1}^{- 1 / (p - 1)}
	\inf_{
		(t_i, t_{i + 1})
	}
	g_\theta^{- 1 / (p - 1)}
	\ge
	c
	\int_{
		t_i
	}^{
		t_{i + 1}
	}
	g_\eta^{- 1 / (p - 1)} (t)
	\,
	dt
$$
it follows that
\begin{align*}
	\left(
		\int_{
			M_1
		}^{
			M_2
		}
		(g_\theta (t) t)^{- 1 / p}
		\,
		dt
	\right)^{p / (p - 1)}
	&
	\ge
	c
	\sum_{i = 0}^{l - 1}
	\int_{
		t_i
	}^{
		t_{i + 1}
	}
	g_\eta^{- 1 / (p - 1)} (t)
	\,
	dt
	\\
	{}
	&
	=
	c
	\int_{
		M_1
	}^{
		M_2
	}
	g_\eta^{- 1 / (p - 1)} (t)
	\,
	dt.
\end{align*}
By~\eqref{pl3.5.8}, this implies the estimate
$$
	\left(
		\int_{
			M_1
		}^{
			M_2
		}
		(g_\theta (t) t)^{- 1 / p}
		\,
		dt
	\right)^{p / (p - 1)}
	\ge
	c
	\int_{
		r_1
	}^{
		r_2
	}
	(\xi f_\sigma (\xi))^{1 / (p - 1)}
	\,
	d\xi
$$
from which we immediately obtain~\eqref{l3.5.1}.

Lemma~\ref{l3.5} is completely proved.
\end{proof}

\begin{proof}[Proof of Theorem~$\ref{t2.1}$]
Assume to the contrary, that $u$ is a non-negative solution 
of~\eqref{1.1}, \eqref{1.2} and, moreover, $M (r_*; u) > 0$ for some $r_* > r_0$.
Lemma~\ref{l3.5} and condition~\eqref{t2.1.3} allows us to assert that 
$M (r; u) \to \infty$ as $r \to \infty$.
In so doing, the inequality
\begin{align}
	\left(
		\int_{
			M (r_*; u)
		}^{
			M (r; u)
		}
		(g_\theta (t) t)^{- 1 / p}
		\,
		dt
	\right)^{p / (p - 1)}
	&
	+
	\int_{
		M (r_*; u)
	}^{
		M (r; u)
	}
	g^{- 1 / \alpha}_\theta (t)
	\,
	dt
	\nonumber
	\\
	&
	\ge
	C
	\int_{r_*}^r
	(\xi f_\sigma (\xi))^{1 / (p - 1)}
	\,
	d\xi
	\label{pt2.1.1}
\end{align}
holds for all sufficiently large $r$, 
where the constant $C > 0$ depends only on 
$n$, $p$, $\alpha$, $\theta$, $\sigma$, $C_1$, and $C_2$.
Passing in this inequality to the limit as $r \to \infty$, 
we get a contradiction to conditions~\eqref{t2.1.1}--\eqref{t2.1.3}.

Theorem~\ref{t2.1} is completely proved.
\end{proof}

\begin{proof}[Proof of Theorem~$\ref{t2.2}$]
As in the proof of Theorem~\ref{t2.1}, we have
$M (r; u) \to \infty$ as $r \to \infty$. 
Hence, in formula~\eqref{pt2.1.1}, the real number $r_*$ can be taken such that 
$M (r_*; u) > 1$.
According to~\eqref{t2.1.3}, we also have
$$
	\int_{r_*}^r
	(\xi f_\sigma (\xi))^{1 / (p - 1)}
	\,
	d\xi
	\ge
	\frac{1}{2}
	\int_{r_0}^r
	(\xi f_\sigma (\xi))^{1 / (p - 1)}
	\,
	d\xi
$$
for all sufficiently large $r$.
Thus, estimate~\eqref{t2.2.1} follows at once from~\eqref{pt2.1.1}.

Theorem~\ref{t2.2} is completely proved.
\end{proof}

\begin{proof}[Proof of Theorem~$\ref{t2.3}$]
If $u \equiv 0$, then~\eqref{t2.3.1} is evident. 
Let $u \not\equiv 0$. In this case, it is obvious that $M (r; u) > 0$ 
for all $r$ in a neighborhood of infinity since ${M (\cdot; u)}$ is a non-decreasing function.
Consequently, applying Lemma~\ref{l3.5}, we obtain
\begin{align*}
	\left(
		\int_{
			M (r; u)
		}^\infty
		(g_\theta (t) t)^{- 1 / p}
		\,
		dt
	\right)^{p / (p - 1)}
	&
	+
	\int_{
		M (r; u)
	}^\infty
	g^{- 1 / \alpha}_\theta (t)
	\,
	dt
	\\
	&
	\ge
	C
	\int_r^\infty
	(\xi f_\sigma (\xi))^{1 / (p - 1)}
	\,
	d\xi
\end{align*}
for all sufficiently large $r$,
where the constant $C > 0$ depends only on 
$n$, $p$, $\alpha$, $\theta$, $\sigma$, $C_1$, and $C_2$.

To complete the proof, it remains to note that the last inequality is equivalent to~\eqref{t2.3.1}.
\end{proof}

\section{Proof of Lemmas~\ref{l3.0}--\ref{l3.2}}\label{Proof_of_Lemmas}

We extend the functions $A$ and $b$ in the left-hand side of~\eqref{1.1} by putting 
$
	{A (x, \xi)} 
	= 
	(C_1 + C_2)
	|\xi|^{p - 2}
	\xi
	/ 
	2
$
and
${b (x)} = 0$
for all 
$x \in {\mathbb R}^n \setminus \Omega$,
$\xi \in {\mathbb R}^n$.

Let us agree on the following notation.
In the proof of Lemma~\ref{l4.2}, by $c$ we denote various positive constants 
that can depend only on $n$, $p$, $\alpha$, and $\omega$.
In the proof of Lemmas~\ref{l4.4}, \ref{l4.5}, \ref{l4.6}, and \ref{l4.0}, 
analogous constants can depend only on $n$, $p$, $\alpha$, $C_1$, and $C_2$,
whereas in the proof of Lemmas~\ref{l4.7} and~\ref{l4.8}, they
can depend only on $n$, $p$, $\alpha$, $\gamma$, $C_1$, and $C_2$.
Finally, in the proof of Lemma~\ref{l4.10} these constants
can depend only on $n$, $p$, $\alpha$, $\gamma$, $\lambda$, $C_1$, and $C_2$.

Assume that $\omega_1$ and $\omega_2$ are open subsets of ${\mathbb R}^n$. 
A function
$v \in {W_{p, loc}^1 (\omega_1 \cap \omega_2)}$
satisfies the condition
$$
	\left.
	v
	\right|_{
		\omega_2 \cap \partial \omega_1
	}
	=
	0
$$
if
$
	\psi
	v
	\in
	{\stackrel{\rm \scriptscriptstyle o}{W}\!\!{}_p^1 (\omega_1 \cap \omega_2)}
$
for any $\psi \in C_0^\infty (\omega_2)$.
We also say that
\begin{equation}
	\left.
	v
	\right|_{
		\omega_2 \cap \partial \omega_1
	}
	\le
	0
	\label{4.1.1}
\end{equation}
if
$
	\psi
	\max \{ v, 0 \}    \in
	{\stackrel{\rm \scriptscriptstyle o}{W}\!\!{}_p^1 (\omega_1 \cap \omega_2)}
$
for any $\psi \in C_0^\infty (\omega_2)$.

\begin{lemma} \label{l4.1}
Let
$v \in W_{p, loc}^1 (\omega_1 \cap \omega_2)$
be a solution of the inequality
\begin{equation}
	\diver A (x, D v)
	+
	b (x) |D v|^\alpha
	\ge
	a (x)
	\quad
	\mbox{in }
	\omega_1 \cap \omega_2
	\label{l4.1.1}
\end{equation}
satisfying condition~\eqref{4.1.1},
where $a \in {L_{1, loc} (\omega_1 \cap \omega_2)}$.
We denote
$\tilde \omega = \{ x \in \omega_1 \cap \omega_2 : v (x) > 0 \}$,
$$
	\tilde v (x)
	=
	\left\{
		\begin{array}{ll}
		v (x),
		&
		x \in \tilde \omega,
		\\
		0,
		&
		x \in \omega_2 \setminus \tilde \omega
	\end{array}
	\right.
$$
and
$$
	\tilde a (x)
	=
	\left\{
		\begin{array}{ll}
		a (x),
		&
		x \in \tilde \omega,
		\\
		0,
		&
		x \in \omega_2 \setminus \tilde \omega.
	\end{array}
	\right.
$$
Then
$$
	\diver A (x, D \tilde v)
	+
	b (x) |D \tilde v|^\alpha
	\ge
	\tilde a (x)
	\quad
	\mbox{in } \omega_2.
$$
\end{lemma}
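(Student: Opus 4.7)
My plan is to test the inequality~\eqref{l4.1.1} against $\varphi \chi_\varepsilon (v)$, where $\varphi \in C_0^\infty (\omega_2)$ is an arbitrary non-negative test function on $\omega_2$ and $\chi_\varepsilon$ is a Lipschitz approximation of the indicator $\chi_{(0, \infty)}$, and then pass to the limit $\varepsilon \to 0^+$. Concretely, fix a non-decreasing Lipschitz function $\chi : \mathbb{R} \to [0, 1]$ with $\chi \equiv 0$ on $(-\infty, 0]$ and $\chi \equiv 1$ on $[1, \infty)$, and set $\chi_\varepsilon (s) = \chi (s / \varepsilon)$. Then $\chi_\varepsilon (v) = \chi_\varepsilon (v^+)$ and $\chi_\varepsilon (v) \to \chi_{\{ v > 0 \}}$ pointwise as $\varepsilon \to 0^+$.

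The first (and most delicate) step is to justify that $\varphi \chi_\varepsilon (v)$ is an admissible test function for the weak form of~\eqref{l4.1.1}; namely, that it belongs to ${\stackrel{\rm \scriptscriptstyle o}{W}\!\!{}_p^1 (\omega_1 \cap \omega_2)}$ with compact support in $\omega_1 \cap \omega_2$, so that it can be approximated in $W_p^1$ by functions in $C_0^\infty (\omega_1 \cap \omega_2)$. Compact support follows from $\varphi \in C_0^\infty (\omega_2)$ combined with $\chi_\varepsilon (v) \equiv 0$ outside $\omega_1$. The vanishing trace on $\omega_2 \cap \partial \omega_1$ comes from the boundary hypothesis $\left. v \right|_{\omega_2 \cap \partial \omega_1} \le 0$: by definition, $\psi v^+ \in {\stackrel{\rm \scriptscriptstyle o}{W}\!\!{}_p^1 (\omega_1 \cap \omega_2)}$ for every $\psi \in C_0^\infty (\omega_2)$, and composing $v^+$ with the Lipschitz function $\chi_\varepsilon$, which vanishes at the origin, preserves the zero boundary trace.

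Once this is done, substituting $\varphi \chi_\varepsilon (v)$ into the weak form and expanding via the product and chain rules yields
\begin{multline*}
    - \int A (x, D v) D \varphi \, \chi_\varepsilon (v) \, dx
    - \int A (x, D v) D v \, \varphi \, \chi_\varepsilon' (v) \, dx \\
    + \int b \, |D v|^\alpha \varphi \chi_\varepsilon (v) \, dx
    \ge
    \int a \, \varphi \chi_\varepsilon (v) \, dx.
\end{multline*}
The quantity $\int A (x, D v) D v \, \varphi \chi_\varepsilon' (v) \, dx$ is non-negative because $A (x, D v) D v \ge C_1 |D v|^p \ge 0$, $\varphi \ge 0$, and $\chi_\varepsilon' \ge 0$; since it enters with a minus sign, dropping it only strengthens the inequality. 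Sending $\varepsilon \to 0^+$ and applying dominated convergence, justified by the local integrabilities $A (x, D v) \in L_{p / (p - 1), loc}$, $|D v|^\alpha \in L_{p / \alpha, loc}$, $b \in L_{\infty, loc}$, $a \in L_{1, loc}$ combined with the compact support of $\varphi$, replaces $\chi_\varepsilon (v)$ by $\chi_{\tilde \omega}$ and confines the three remaining integrals to $\tilde \omega$. Since $D \tilde v = D v$ on $\tilde \omega$, $D \tilde v = 0$ almost everywhere on $\omega_2 \setminus \tilde \omega$, and $A (x, 0) = 0$ by the structural bound $|A (x, \zeta)| \le C_2 |\zeta|^{p - 1}$, those integrals coincide with the analogous integrals over $\omega_2$ for $\tilde v$ and $\tilde a$, giving the desired weak inequality.

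The main obstacle is the verification that $\varphi \chi_\varepsilon (v)$ lies in ${\stackrel{\rm \scriptscriptstyle o}{W}\!\!{}_p^1 (\omega_1 \cap \omega_2)}$, which must be extracted from the paper's definition of the boundary condition~\eqref{4.1.1}. The remaining steps are essentially routine: a chain- and product-rule computation in the weak form, followed by a dominated-convergence passage to the limit. The crucial structural feature that makes the argument go through is the favorable sign of the ellipticity term $A (x, D v) D v \ge 0$, which allows the $\chi_\varepsilon'$ contribution to be discarded outright rather than carefully estimated.
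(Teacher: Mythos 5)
Your proposal is correct and follows essentially the same route as the paper: the paper tests with $\psi\,\eta_\tau(v)$ for a smooth non-decreasing cutoff $\eta$ vanishing on $(-\infty,0]$, discards the term $\int \eta_\tau'(v)\,\psi\, A(x,Dv)\,Dv\,dx \ge 0$ by ellipticity exactly as you discard the $\chi_\varepsilon'$ term, and passes to the limit $\tau\to+0$ to obtain $\chi_{\tilde\omega}$. The only (immaterial) differences are your use of a Lipschitz rather than smooth cutoff and your more explicit discussion of the admissibility of the test function, which the paper leaves implicit.
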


\begin{proof}
We take a non-decreasing function $\eta \in {C^\infty ({\mathbb R})}$ such that
$
	\left.
		\eta
	\right|_{
		(- \infty, 0]
	}
	=
	0
$
and
$
	\left.
		\eta
	\right|_{
		[1, \infty)
	}
	=
	1.
$
Put
$\eta_\tau (t) = \eta (t / \tau)$ 
and
$\varphi = \psi \eta_\tau (v)$,
where $\psi \in C_0^\infty (\omega_2)$ is a non-negative function
and $\tau > 0$ is a real number.
By~\eqref{l4.1.1}, we have
$$
	- \int_{
		\omega_1 \cap \omega_2
	}
	A (x, D v)
	D \varphi
	\, dx
	+
	\int_{
		\omega_1 \cap \omega_2
	}
	b (x) |D v|^\alpha
	\varphi
	\, dx
	\ge
	\int_{
		\omega_1 \cap \omega_2
	}
	a (x)
	\varphi
	\, dx,
$$
whence it follows that
\begin{align*}
	&
	- \int_{
		\omega_1 \cap \omega_2
	}
	\eta_\tau (u)
	A (x, D v)
	D \psi
	\, dx
	+
	\int_{
		\omega_1 \cap \omega_2
	}
	b (x) |D v|^\alpha
	\eta_\tau (u)
	\psi
	\, dx
	\\
	&
	\qquad
	{}
	\ge
	\int_{
		\omega_1 \cap \omega_2
	}
	a (x)
	\psi
	\eta_\tau (u)
	\, dx
	+
	\int_{
		\omega_1 \cap \omega_2
	}
	\eta_\tau' (u)
	\psi
	A (x, D v)
	D v
	\, dx
	\\
	&
	\qquad
	{}
	\ge
	\int_{
		\omega_1 \cap \omega_2
	}
	a (x)
	\psi
	\eta_\tau (u)
	\, dx.
\end{align*}
Passing to the limit as $\tau \to +0$ in the last expression, we obtain
\begin{align*}
	- \int_{
		\omega_1 \cap \omega_2
	}
	\chi_{
		\tilde \omega
	}
	(x)
	A (x, D v)
	D \psi
	\, dx
	&
	+
	\int_{
		\omega_1 \cap \omega_2
	}
	b (x) |D v|^\alpha
	\chi_{
		\tilde \omega
	}
	(x)
	\psi
	\, dx
	\\
	&
	\ge
	\int_{
		\omega_1 \cap \omega_2
	}
	\chi_{
		\tilde \omega
	}
	(x)
	a (x)
	\psi
	\, dx,
\end{align*}
where $\chi_{\tilde \omega}$ is the characteristic function of the set $\tilde \omega$, 
i.e. $\chi_{\tilde \omega} (x) = 1$ if $x \in \tilde \omega$ and 
$\chi_{\tilde \omega} (x) = 0$ otherwise.
Since
$$
	D \tilde v (x)
	=
	\left\{
		\begin{array}{ll}
		D v (x),
		&
		x \in \tilde \omega,
		\\
		0,
		&
		x \in \omega_2 \setminus \tilde \omega,
	\end{array}
	\right.
$$
this immediately implies that
$$
	- \int_{
		\omega_2
	}
	A (x, D \tilde v)
	D \psi
	\, dx
	+
	\int_{
		\omega_2
	}
	b (x) |D \tilde v|^\alpha
	\psi
	\, dx
	\ge
	\int_{
		\omega_2
	}
	\tilde a (x)
	\psi
	\, dx.
$$

The proof is completed.
\end{proof}

\begin{corollary}\label{c4.1}
Let $v \in W_p^1 (\omega)$ be a solution of the inequality
$$
	\diver A (x, D v)
	+
	b (x) |D v|^\alpha
	\ge
	a (x)
	\quad
	\mbox{in }
	\omega,
$$
where $\omega$ is an open subset of ${\mathbb R}^n$ and $a \in L_{1, loc} (\omega)$.
If
$\tilde \omega = \{ x \in \omega : v (x) > 0 \}$, 
$\tilde v = \chi_{\tilde \omega} v$
and, moreover,
$\tilde a = \chi_{\tilde \omega} a$,
then
$$
	\diver A (x, D \tilde v)
	+
	b (x) |D \tilde v|^\alpha
	\ge
	\tilde a (x)
	\quad
	\mbox{in }
	\omega.
$$
\end{corollary}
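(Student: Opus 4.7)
The plan is to derive Corollary~\ref{c4.1} as a direct specialization of Lemma~\ref{l4.1} to the case $\omega_1 = \omega_2 = \omega$. With this choice, the two sets of hypotheses almost match: the inequality in $\omega_1 \cap \omega_2$ is exactly the inequality in $\omega$, and the functions $\tilde v$, $\tilde a$ defined in the lemma reduce precisely to $\chi_{\tilde\omega} v$ and $\chi_{\tilde\omega} a$ because $\omega_2 \setminus \tilde\omega = \omega \setminus \tilde\omega$.

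The only nontrivial point is to verify the zero-boundary hypothesis
$v|_{\omega_2 \cap \partial\omega_1} = 0$
required by Lemma~\ref{l4.1}, i.e.\ that $\psi v \in \stackrel{\rm \scriptscriptstyle o}{W}{}\!{}_p^1(\omega)$ for every $\psi \in C_0^\infty(\omega)$. This is where the stronger global hypothesis $v \in W_p^1(\omega)$ (as opposed to merely $W_{p,loc}^1(\omega)$) enters: for any such $\psi$, the product $\psi v$ lies in $W_p^1(\omega)$ and has compact support contained in $\operatorname{supp}\psi \subset \omega$. Standard mollification then produces a sequence in $C_0^\infty(\omega)$ converging to $\psi v$ in the $W_p^1$-norm, placing $\psi v$ in the closure space $\stackrel{\rm \scriptscriptstyle o}{W}{}\!{}_p^1(\omega)$.

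I expect no serious obstacle: once the boundary condition is checked, Lemma~\ref{l4.1} applies verbatim and yields
$$
    - \int_\omega A(x, D\tilde v)\, D\psi \, dx
    + \int_\omega b(x) |D\tilde v|^\alpha \psi \, dx
    \ge \int_\omega \tilde a(x)\, \psi \, dx
$$
for every non-negative $\psi \in C_0^\infty(\omega)$, which is the distributional form of the asserted inequality. The argument is therefore essentially a bookkeeping step; the real content is already contained in Lemma~\ref{l4.1}, and the corollary simply records the convenient special case in which the ambient domain and the cutoff domain coincide.
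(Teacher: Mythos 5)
Your proposal is correct and is exactly the paper's argument: the paper's entire proof of Corollary~\ref{c4.1} is the single line ``We put $\omega_1 = \omega_2 = \omega$ in Lemma~\ref{l4.1}.'' Your verification of the boundary condition is sound (indeed, with $\omega_1 = \omega_2 = \omega$ it holds for any $\psi \in C_0^\infty(\omega)$ simply because $\psi$ has compact support in the open set $\omega$, so even $v \in W_{p,loc}^1(\omega)$ would suffice for that step), and the rest is the same bookkeeping the paper leaves implicit.
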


\begin{proof}
We put $\omega_1 = \omega_2 = \omega$ in Lemma~\ref{l4.1}.
\end{proof}

\begin{lemma}[the maximum principle]\label{l4.2}
Let
$v \in W_p^1 (\omega) \cap L_\infty (\omega)$
be a non-negative solution of the inequality
\begin{equation}
	\diver A (x, D v)
	+
	b (x) |D v|^\alpha
	\ge
	0
	\quad
	\mbox{in } \omega,
	\label{l4.2.1}
\end{equation}
where $\omega \subset {\mathbb R}^n$ is a bounded open set with a smooth boundary.
Then
\begin{equation}
	\esssup_\omega
	v
	=
	\esssup_{\partial \omega}
	v,
	\label{l4.2.2}
\end{equation}
where the restriction of $v$ to $\partial \omega$
is understood in the sense of the trace and
the $\esssup$ on the right-hand side is with respect to
$(n-1)$-dimensional Lebesgue measure on $\partial \omega$.
\end{lemma}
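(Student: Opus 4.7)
The plan is to reduce matters, via the truncation $\tilde v := (v-M)_+$ with $M := \esssup_{\partial\omega} v$, to a Poincar\'e-type energy estimate on a function with zero trace, and then to iterate on the level $M$. First, observe that $v|_{\partial\omega} \le M$ a.e., so the trace of $\tilde v$ vanishes, whence $\tilde v \in \stackrel{\rm \scriptscriptstyle o}{W}\!\!{}_p^1(\omega)$. Since $D(v-M)=Dv$, the function $v-M$ solves the same inequality~\eqref{l4.2.1} with $a\equiv 0$; applying Corollary~\ref{c4.1} to it promotes $\tilde v$ to a distributional solution of
$$
\diver A(x, D\tilde v) + b(x)|D\tilde v|^\alpha \ge 0 \quad \text{in } \omega.
$$

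Testing this against $\tilde v$ itself (admissible by approximation since $\tilde v \in \stackrel{\rm \scriptscriptstyle o}{W}\!\!{}_p^1(\omega)\cap L_\infty(\omega)$), using $\zeta A(x,\zeta) \ge C_1 |\zeta|^p$ and the local boundedness of $b$, one obtains
$$
C_1 \int_\omega |D\tilde v|^p \, dx \le \|b\|_{L_\infty(\omega)} \int_\omega \tilde v\, |D\tilde v|^\alpha \, dx.
$$
When $\alpha = p$ the right-hand side is immediately bounded by $\|b\|_{L_\infty(\omega)} \|\tilde v\|_{L_\infty(\omega)} \int |D\tilde v|^p$. When $\alpha < p$ one applies H\"older with conjugate exponents $(p/\alpha,\,p/(p-\alpha))$, the pointwise bound $\tilde v^{p/(p-\alpha)} \le \|\tilde v\|_{L_\infty(\omega)}^{p\alpha/(p-\alpha)} \tilde v^p$, and Poincar\'e's inequality on $\stackrel{\rm \scriptscriptstyle o}{W}\!\!{}_p^1(\omega)$. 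In either case one reaches
$$
\int_\omega |D\tilde v|^p \, dx \le c\, \|\tilde v\|_{L_\infty(\omega)}^\gamma \int_\omega |D\tilde v|^p \, dx
$$
for some $\gamma = \gamma(p,\alpha) > 0$ and $c$ depending only on $n, p, \alpha, \|b\|_{L_\infty(\omega)}, C_1$, and $\omega$. This forces the dichotomy: either $D\tilde v \equiv 0$ a.e., and then the vanishing trace yields $\tilde v \equiv 0$ on each connected component of $\omega$; or $\|\tilde v\|_{L_\infty(\omega)} \ge c_* := c^{-1/\gamma} > 0$.

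To close the argument one iterates at higher truncation levels: for each $k > 0$ the function $\tilde v_k := (v-M-k)_+$ still belongs to $\stackrel{\rm \scriptscriptstyle o}{W}\!\!{}_p^1(\omega)$, and the same reasoning supplies the identical dichotomy with the \emph{same} threshold $c_*$. Were $\esssup_\omega v > M$, one could choose $k$ with $0 < \esssup_\omega v - M - k < c_*$; then $\tilde v_k \not\equiv 0$ while $\|\tilde v_k\|_{L_\infty(\omega)} < c_*$, contradicting the dichotomy. Consequently $\esssup_\omega v \le M$, and since $\esssup_{\partial\omega} v \le \esssup_\omega v$ is standard for bounded $W_p^1$-functions with traces, \eqref{l4.2.2} follows. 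The principal obstacle is the case $\alpha < p$: the Young/H\"older manipulation must manufacture a genuinely multiplicative factor $\|\tilde v\|_{L_\infty(\omega)}^\gamma$ with strictly positive exponent on the right of the energy inequality, since only a multiplicative (not merely additive) smallness can be vanquished by the subsequent level-truncation iteration.
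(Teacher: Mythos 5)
Your overall strategy (truncate, test the inequality with the truncation, absorb the gradient term into the $p$-energy, then extract a smallness factor) is the same as the paper's, but the mechanism you use to produce smallness fails at the endpoint $\alpha = p-1$, which is included in the hypotheses $p-1\le\alpha\le p$. First, a computational slip: after H\"older with exponents $(p/\alpha,\,p/(p-\alpha))$ the correct pointwise bound is $\tilde v^{p/(p-\alpha)} \le \|\tilde v\|_{L_\infty(\omega)}^{p(1+\alpha-p)/(p-\alpha)}\,\tilde v^p$, not $\|\tilde v\|_{L_\infty(\omega)}^{p\alpha/(p-\alpha)}\tilde v^p$ (the latter would require $p\le 1$). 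Taking the $(p-\alpha)/p$-th power and applying Poincar\'e therefore yields $\gamma = 1+\alpha-p$. For $p-1<\alpha\le p$ this is strictly positive and your dichotomy plus level-iteration does close the proof. But for $\alpha = p-1$ you get $\gamma=0$: the energy inequality becomes $C_1\int_\omega|D\tilde v_k|^p\,dx \le c\int_\omega|D\tilde v_k|^p\,dx$ with $c$ independent of the level $k$, which carries no information unless $c<C_1$, and no choice of $k$ helps. You flag this yourself as ``the principal obstacle'' but do not resolve it, so the argument is incomplete exactly at the borderline value of $\alpha$.

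The paper's proof supplies the missing ingredient by choosing the truncation level $\tau$ close to $\esssup_\omega v$ (rather than anchoring it at the boundary value and iterating) and by using a \emph{three}-factor H\"older inequality: besides the factors absorbed by the energy and by the Sobolev embedding, there remains $\bigl(\int_{\omega_\tau} v_\tau^{(1+\alpha-p)p_3}\,dx\bigr)^{1/p_3}$ with $\omega_\tau=\{x:\tau<v(x)<\esssup_\omega v\}$, which is bounded by $(\mes\omega_\tau)^{1/p_3}\esssup_{\omega_\tau} v_\tau^{\,1+\alpha-p}$ and tends to zero because $\mes\omega_\tau\to 0$ as $\tau\to\esssup_\omega v$ --- even when $1+\alpha-p=0$. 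To repair your version you would need to import exactly this measure-based smallness, e.g.\ by estimating $\bigl(\int_\omega\tilde v_k^p\,dx\bigr)^{1/p}\le c\,(\mes\{\tilde v_k>0\})^{1/n}\bigl(\int_\omega|D\tilde v_k|^p\,dx\bigr)^{1/p}$ via the Sobolev inequality and letting $k$ approach $\esssup_\omega v - M$.
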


\begin{proof}
Assume that~\eqref{l4.2.2} is not valid. We put
$$
	v_\tau (x)
	=
	\max \{ v (x) - \tau, 0 \},
$$
where $\tau$ is a real number satisfying the condition
\begin{equation}
	\esssup_{\partial \omega}
	v
	<
	\tau
	<
	\esssup_\omega
	v.
	\label{pl4.2.1}
\end{equation}
It can easily be seen that $v_\tau$ is a non-negative function belonging to
$
	{\stackrel{\rm \scriptscriptstyle o}{W}\!\!{}_p^1 (\omega)};
$
therefore,
\begin{equation}
	- \int_\omega
	A (x, D v)
	D v_\tau
	\,	dx
	+
	\int_\omega
	b (x)
	| D v|^\alpha
	v_\tau
	\,	dx
	\ge
	0
	\label{pl4.2.2}
\end{equation}
in accordance with~\eqref{l4.2.1}.
Since 
$$
	D v_\tau (x)
	=
	\left\{
		\begin{array}{ll}
		D v (x),
		&
		x \in \omega_\tau,
		\\
		0,
		&
		x \in \omega \setminus \omega_\tau,
	\end{array}
	\right.
$$
where 
$\omega_\tau = \{ x \in \omega : \tau < v (x) < \esssup\nolimits_\omega v \}$, 
we have
$$
	\int_\omega
	A (x, D v)
	D v_\tau
	\,	dx
	=
	\int_{\omega_\tau}
	A (x, D v_\tau)
	D v_\tau
	\,	dx
	\ge
	C_1
	\int_{\omega_\tau}
	| D v_\tau |^p
	\,	dx
$$
and
$$
	\int_\omega
	b (x)
	| D v|^\alpha
	v_\tau
	\,	dx
	=
	\int_{\omega_\tau}
	b (x)
	| D v_\tau|^\alpha
	v_\tau
	\,	dx
	\le
	\esssup_\omega
	b
	\int_{\omega_\tau}
	| D v_\tau|^\alpha
	v_\tau
	\,	dx.
$$
Hence,~\eqref{pl4.2.2} allows us to assert that
\begin{equation}
	C_1
	\int_{\omega_\tau}
	| D v_\tau |^p
	\,	dx
	\le
	\esssup_\omega
	b
	\int_{\omega_\tau}
	| D v_\tau|^\alpha
	v_\tau
	\,	dx
	\label{pl4.2.3}
\end{equation}

At first, consider the case of $p - 1 \le \alpha < p$. 
Let $p_1 = p / \alpha$ and $p_2$ be some real number such that
$p < (p - \alpha) p_2 < n p / (n - p)$ if $n > p$ 
and 
$p < (p - \alpha) p_2$
if $n \le p$.
We also take the real number $p_3$ satisfying the relation
$1 / p_1 + 1 / p_2 + 1 / p_3 = 1$.
Since $1 / p_1 + 1 / p_2 < 1$, we obviously have $p_3 > 1$.

From the H\"older inequality for three functions, it follows that
\begin{align}
	\int_{\omega_\tau}
	| D v_\tau|^\alpha
	v_\tau
	\,	dx
	\le
	{}
	&
	\left(
		\int_{\omega_\tau}
		|D v_\tau|^p
		\,	dx
	\right)^{1 / p_1}
	\left(
		\int_{\omega_\tau}
		v_\tau^{(p - \alpha) p_2}
		\,	dx
	\right)^{1 / p_2}
	\nonumber
	\\
	&
	\times
	\left(
		\int_{\omega_\tau}
		v_\tau^{(1 - p + \alpha) p_3}
		\,	dx
	\right)^{1 / p_3}.
	\label{pl4.2.4}
\end{align}
Using the Friedrichs inequality and the Sobolev embedding theorem~\cite{Mazya}, 
we obtain
$$
	\left(
		\int_\omega
		v_\tau^{(p - \alpha) p_2}
		\,	dx
	\right)^{1 / p_2}
	\le
	c
	\left(
		\int_\omega
		|D v_\tau|^p
		\,	dx
	\right)^{(p - \alpha) / p}
$$
It is also obvious that
$$
	\int_\omega
	v_\tau^{(p - \alpha) p_2}
	\,	dx
	=
	\int_{\omega_\tau}
	v_\tau^{(p - \alpha) p_2}
	\,	dx
$$
and
$$
	\int_\omega
	|D v_\tau|^p
	\,	dx
	=
	\int_{\omega_\tau}
	|D v_\tau|^p
	\,	dx.
$$
Consequently,~\eqref{pl4.2.4} implies the estimate
$$
	\int_{\omega_\tau}
	| D v_\tau|^\alpha
	v_\tau
	\,	dx
	\le
	c
	\left(
		\int_{\omega_\tau}
		v_\tau^{(1 - p + \alpha) p_3}
		\,	dx
	\right)^{1 / p_3}
	\int_{\omega_\tau}
	|D v_\tau|^p
	\,	dx.
$$
Since 
$\mes \omega_\tau \to 0$ 
as 
$\tau \to \esssup\nolimits_\omega v - 0$, 
we have
$$
	\int_{\omega_\tau}
	v_\tau^{(1 - p + \alpha) p_3}
	\,	dx
	\le
	\mes \omega_\tau
	\esssup_{\omega_\tau}
	v_\tau^{(1 - p + \alpha) p_3}
	\to
	0
	\quad
	\mbox{as } \tau \to \esssup_\omega v - 0.
$$
Thus, by appropriate choice of the real number $\tau$,  one can achieve that
\begin{equation}
	\esssup_\omega
	b
	\int_{\omega_\tau}
	| D v_\tau|^\alpha
	v_\tau
	\,	dx
	\le
	\frac{C_1}{2}
	\int_{\omega_\tau}
	|D v_\tau|^p
	\,	dx.
	\label{pl4.2.5}
\end{equation}
Combining the last estimate with~\eqref{pl4.2.3}, we obtain
\begin{equation}
	\int_{\omega_\tau}
	| D v_\tau |^p
	\,	dx
	\le
	0,
	\label{pl4.2.6}
\end{equation}
whence it follows that $v_\tau = 0$
since 
$
	v_\tau
	\in
	{\stackrel{\rm \scriptscriptstyle o}{W}\!\!{}_p^1 (\omega)}.
$
This contradicts~\eqref{pl4.2.1}.

Now, let $\alpha = p$. Then
$$
	\int_{\omega_\tau}
	| D v_\tau|^\alpha
	v_\tau
	\,	dx
	\le
	\esssup_{\omega_\tau}
	v_\tau
	\int_{\omega_\tau}
	| D v_\tau|^p
	\,	dx.
$$
It is easy to see that
$$
	\esssup_{\omega_\tau}
	v_\tau
	=
	\esssup_\omega
	v
	-
	\tau
	\to
	0
	\quad
	\mbox{as } \tau \to \esssup_\omega v - 0;
$$
therefore, taking the real number $\tau$ close enough to $\esssup\nolimits_\omega v$,
we again obtain inequality~\eqref{pl4.2.5} which immediately implies~\eqref{pl4.2.6}.
Thus, we derive a contradiction once more.

The proof is completed.
\end{proof}

\begin{corollary}\label{c4.2}
Let $u \ge 0$ be a solution of problem~\eqref{1.1}, \eqref{1.2}, then
\begin{equation}
	M (r; u)
	=
	\esssup_{
		B_r
		\cap
		\Omega
	}
	u
	\label{c4.2.1}
\end{equation}
for all $r > r_0$.
\end{corollary}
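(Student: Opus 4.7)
The plan is to reduce the statement to the maximum principle (Lemma~\ref{l4.2}) applied on the ball $B_r$, after extending $u$ by zero outside $\Omega$.

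First, I would note that, since $q \ge 0$ and $g \ge 0$, any non-negative solution of~\eqref{1.1} automatically satisfies the homogeneous inequality
$$
\diver A(x, D u) + b(x) |D u|^\alpha \ge 0 \quad \text{in } \Omega,
$$
so the structural hypothesis of Lemma~\ref{l4.2} is in place.

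Next, using condition~\eqref{1.2}, I would extend $u$ by zero outside $\Omega$ to obtain a function $\tilde u$ on $\mathbb{R}^n$. Concretely, taking $\psi \in C_0^\infty(\mathbb{R}^n)$ with $\psi \equiv 1$ on $\overline{B_r}$, condition~\eqref{1.2} gives $\psi u \in \stackrel{\rm \scriptscriptstyle o}{W}\!\!{}_p^1(\Omega)$, and the zero extension of $\psi u$ to $\mathbb{R}^n$ lies in $W_p^1(B_r) \cap L_\infty(B_r)$ and agrees with $\tilde u$ on $B_r$. Using the extensions of $A$ and $b$ fixed at the start of Section~\ref{Proof_of_Lemmas} (so that $A(x, 0) = 0$ and $b = 0$ off $\Omega$), I would verify that $\tilde u$ satisfies
$$
\diver A(x, D\tilde u) + b(x) |D\tilde u|^\alpha \ge 0 \quad \text{in } B_r.
$$
Indeed, for a non-negative $\varphi \in C_0^\infty(B_r)$, the contribution from $B_r \setminus \Omega$ to the weak formulation vanishes because $D\tilde u = 0$, $b = 0$ and $A(x, 0) = 0$ there; the contribution from $B_r \cap \Omega$ is handled by approximating $\varphi$ by functions of the form $\varphi \eta_k$ with $\eta_k \in C_0^\infty(\Omega)$ tending to $1$ on compacta, the passage to the limit being legitimate thanks to $\psi u \in \stackrel{\rm \scriptscriptstyle o}{W}\!\!{}_p^1(\Omega)$.

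Finally, Lemma~\ref{l4.2} applied to $\tilde u$ on $\omega = B_r$ yields $\esssup_{B_r} \tilde u = \esssup_{S_r} \tilde u$. Since $u \ge 0$ and $\tilde u$ vanishes outside $\Omega$, the left-hand side coincides with $\esssup_{B_r \cap \Omega} u$, while the right-hand side equals $M(r; u)$ by definition, which gives~\eqref{c4.2.1}. The main obstacle is precisely the verification that the extended inequality holds on all of $B_r$ across $\partial\Omega$; this rests on condition~\eqref{1.2}, which both places $\tilde u$ in $W_p^1(B_r)$ and licenses the approximation of test functions in $C_0^\infty(B_r)$ by functions compactly supported in $\Omega$.
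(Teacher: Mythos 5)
Your proof follows the paper's route exactly: extend $u$ by zero and apply the maximum principle (Lemma~\ref{l4.2}) with $\omega = B_r$; the extension step that you verify by hand is precisely Lemma~\ref{l4.1} with $\omega_1 = \Omega$ and $\omega_2 = {\mathbb R}^n$, which the paper simply cites. The only technical difference is in how the inequality is pushed across $\partial\Omega$: the paper's Lemma~\ref{l4.1} truncates the \emph{solution} (testing with $\psi\,\eta_\tau(u)$ and discarding the correctly signed term $\int \eta_\tau'(u)\,\psi\, A(x,Du)\,Du\,dx$), which is cleaner than your proposal to approximate the \emph{test function} by $\varphi\eta_k$ with $\eta_k\in C_0^\infty(\Omega)$, where the term $\int \varphi\, A(x,Du)\,D\eta_k\,dx$ requires additional justification beyond merely invoking $\psi u\in \stackrel{\rm \scriptscriptstyle o}{W}\!\!{}_p^1(\Omega)$.
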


\begin{proof}
We extend the function $u$ on the whole set ${\mathbb R}^n$ 
by putting $u = 0$ on ${\mathbb R}^n \setminus \Omega$.
According to Lemma~\ref{l4.1}, the extended function $\tilde u$ satisfies the inequality
$$
	\diver A (x, D \tilde u)
	+
	b (x) |D \tilde u|^\alpha
	\ge
	0
	\quad
	\mbox{in } {\mathbb R}^n.
$$
Thus, to obtain~\eqref{c4.2.1}, it remains to use Lemma~\ref{l4.2} with $\omega = B_r$.
\end{proof}

By $Q_l^z$ we mean the open cube in ${\mathbb R}^n$ 
of edge length $l > 0$ and center at a point~$z$.
In the case of $z = 0$, we write $Q_l$ instead of $Q_l^0$.

The following assertion is elementary, but useful.

\begin{lemma}\label{l4.3}
Let $v \in W_p^1 (Q_l^z)$, $l > 0$, $z \in {\mathbb R}^n$.
If $\mes \{ x \in Q_l^z : v (x) = 0 \} \ge l^n / 2$, then
$$	
	\int_{
		Q_l^z
	}
	v^p
	\, dx
	\le
	C
	l^p
	\int_{
		Q_l^z
	}
	|D v|^p
	\, dx,
$$
where the constant $C > 0$ depends only on $n$ and $p$.
\end{lemma}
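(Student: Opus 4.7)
My plan is to reduce to a fixed cube by scaling and then run a standard compactness/contradiction argument against the Rellich--Kondrachov theorem. First, by a translation $x \mapsto x - z$ and a dilation $x \mapsto x / l$, one sends $Q_l^z$ onto the unit cube $Q_1$. Under the substitution $w(y) = v(z + l y)$, the hypothesis $\mes \{ v = 0 \} \ge l^n / 2$ becomes $\mes \{ w = 0 \} \ge 1/2$, while $\int_{Q_l^z} v^p \, dx = l^n \int_{Q_1} w^p \, dy$ and $\int_{Q_l^z} |Dv|^p \, dx = l^{n-p} \int_{Q_1} |Dw|^p \, dy$. So the desired inequality becomes the scale-invariant statement
\begin{equation*}
	\int_{Q_1} w^p \, dy
	\le
	C
	\int_{Q_1} |Dw|^p \, dy
\end{equation*}
for all $w \in W_p^1(Q_1)$ with $\mes \{ y \in Q_1 : w(y) = 0 \} \ge 1/2$, and it suffices to prove this.

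Second, I would argue by contradiction. If the constant $C$ does not exist, there is a sequence $w_k \in W_p^1(Q_1)$ with $\mes \{w_k = 0\} \ge 1/2$, $\|w_k\|_{L^p(Q_1)} = 1$, and $\|Dw_k\|_{L^p(Q_1)} \to 0$. Then $\{w_k\}$ is bounded in $W_p^1(Q_1)$, so by the Rellich--Kondrachov compact embedding $W_p^1(Q_1) \hookrightarrow L^p(Q_1)$ one can extract a subsequence (not relabeled) converging in $L^p(Q_1)$ to some $w \in L^p(Q_1)$. Since $Dw_k \to 0$ in $L^p$, the limit $w$ belongs to $W_p^1(Q_1)$ with $Dw = 0$, hence $w \equiv c$ is a constant on the connected set $Q_1$. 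From $\|w_k\|_{L^p} \to \|w\|_{L^p} = 1$ we obtain $|c|^p = 1$, in particular $c \ne 0$.

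Third, the zero-set condition must be contradicted in the limit. By Chebyshev's inequality applied to $w_k - c$,
\begin{equation*}
	\mes \{ y \in Q_1 : |w_k(y) - c| \ge |c|/2 \}
	\le
	\frac{2^p}{|c|^p}
	\int_{Q_1} |w_k - c|^p \, dy
	\to 0.
\end{equation*}
On the complement $|w_k| \ge |c|/2 > 0$, so $\{w_k = 0\}$ is contained in the above set and hence $\mes \{w_k = 0\} \to 0$, contradicting the standing hypothesis $\mes \{w_k = 0\} \ge 1/2$. This contradiction yields the required constant $C = C(n,p)$, and undoing the scaling finishes the proof.

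The only mildly delicate step is the passage to the limit in the zero-set hypothesis; but since $L^p$ convergence to a nonzero constant forces $w_k$ to be bounded away from zero except on a set of vanishing measure, the Chebyshev bound above handles it cleanly, so no genuine obstacle arises.
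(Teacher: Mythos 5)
Your proposal is correct and follows essentially the same route as the paper: normalize to the unit cube, argue by contradiction with a sequence normalized in $L_p$, use the compact embedding to extract a limit which must be a nonzero constant, and then contradict the zero-set hypothesis. The only cosmetic difference is that you phrase the final contradiction via Chebyshev's inequality, whereas the paper bounds $\| v_k - \lambda \|_{L_p}$ from below directly on the set $\{ v_k = 0 \}$; these are the same estimate.
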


\begin{proof}
Without loss of generality, it can be assumed that $l = 1$ and $z = 0$; 
otherwise we use the change of coordinates.

The proof is by reductio ad absurdum.
Let there be a sequence $v_k \in W_p^1 (Q_1)$
such that $\mes \{ x \in Q_1 : v_k (x) = 0 \} \ge 1 / 2$ and
\begin{equation}
	\int_{
		Q_1
	}
	v_k^p
	\, dx
	>
	k
	\int_{
		Q_1
	}
	|D v_k|^p
	\, dx,
	\quad
	k = 1, 2, \ldots.
	\label{pl4.3.1}
\end{equation}
It can be assumed that 
\begin{equation}
	\int_{
		Q_1
	}
	v_k^p
	\, dx
	=
	1,
	\quad
	k = 1, 2, \ldots;
	\label{pl4.3.2}
\end{equation}
otherwise we replace $v_k$ by $v_k / \| v_k \|_{L_p (Q_1)}$.
Therefore, $\{ v_k \}_{k = 0}^\infty$ is a bounded sequence in $W_p^1 (Q_1)$.
By the Sobolev embedding theorem~\cite{Mazya},
it has a subsequence that converges in $L_p (Q_1)$.
Taking into account~\eqref{pl4.3.1} and~\eqref{pl4.3.2}, 
one can obviously claim that this subsequence 
converges in the space $W_p^1 (Q_1)$ to a real number $\lambda \ne 0$.
To reduce clutter in indices, we also denote this subsequence by $\{ v_k \}_{k = 0}^\infty$.
Thus, we have
$$
	\| v_k - \lambda \|_{
		L_p (Q_1)
	}
	\to
	0
	\quad
	\mbox{as } k \to \infty.
$$
At the same time, from the definition of the functions $v_k$, it follows that
$$
	\| v_k - \lambda \|_{
		L_p (Q_1)
	}
	\ge
	|\lambda|
	\mes \{ x \in Q_1 : v_k (x) = 0 \} 
	\ge 
	\frac{|\lambda|}{2},
	\quad
	k = 1, 2, \ldots.
$$

This contradiction proves the lemma.
\end{proof}

\begin{lemma}\label{l4.4}
Let $v \in W_p^1 (B_r^z) \cap L_\infty (B_r^z)$, $r > 0$, $z \in {\mathbb R}^n$, 
be a non-negative solution of the inequality
$$
	\diver A (x, D v)
	+
	b (x) |D v|^\alpha
	\ge
	0
	\quad
	\mbox{in } B_r^z
$$
satisfying the condition
\begin{equation}
	r^{p - \alpha}
	\left(
		\esssup_{
			B_r^z
		}
		v
	\right)^{\alpha - p + 1}
	\esssup_{
		B_r^z
	}
	b
	\le
	\frac{C_1}{2}.
	\label{l4.4.2}
\end{equation}
Then
\begin{equation}
	\int_{
		B_{r_1}^z
	}
	|D v^\gamma|^p
	\, dx
	\le
	\frac{
		C
	}{
		(r_2 - r_1)^p
	}
	\int_{
		B_{r_2}^z
	}
	v^{\gamma p}
	\, dx
	\label{l4.4.3}
\end{equation}
for all real numbers $0 < r_1 < r_2 \le r$ and $\gamma \ge 1$,
where the constant $C > 0$ depends only on $n$, $p$, $\alpha$, $C_1$, and $C_2$.
\end{lemma}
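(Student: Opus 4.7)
The plan is to run a standard Caccioppoli-type energy estimate, modified to absorb the lower-order term $b(x)|Dv|^\alpha$ via the structural hypothesis. I will fix a cutoff $\psi \in C_0^\infty(B_{r_2}^z)$ with $\psi \equiv 1$ on $B_{r_1}^z$, $0 \le \psi \le 1$, and $|D\psi| \le c/(r_2-r_1)$, and test the weak form of the inequality against $\varphi = \psi^p v^s$, where $s = p(\gamma-1)+1 \ge 1$. Since $v \in W_p^1 \cap L_\infty$ and $\gamma \ge 1$, such $\varphi$ lies in $W_p^1$ with compact support and is admissible after a routine mollification. Expanding $D\varphi = p\psi^{p-1}v^s D\psi + s\psi^p v^{s-1}Dv$ and invoking $A(x,Dv)\cdot Dv \ge C_1|Dv|^p$ together with $|A(x,Dv)| \le C_2|Dv|^{p-1}$ leads to
\begin{equation*}
sC_1 \int \psi^p v^{s-1}|Dv|^p\,dx \le pC_2 \int \psi^{p-1}v^s|D\psi||Dv|^{p-1}\,dx + \int b|Dv|^\alpha\psi^p v^s\,dx.
\end{equation*}
Young's inequality with exponents $p$ and $p/(p-1)$ applied to the first integral on the right absorbs a small fraction of the left-hand side and leaves a remainder bounded by $c(r_2-r_1)^{-p}\int v^{p\gamma}\,dx$, since $s+p-1 = p\gamma$ and $|D\psi|^p \le c(r_2-r_1)^{-p}$.

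The delicate step is controlling the $b$-term; write $M = \esssup_{B_r^z} v$. In the case $\alpha < p$ I will factor $b|Dv|^\alpha\psi^p v^s = (|Dv|^p\psi^p v^{s-1})^{\alpha/p}\cdot b\psi^{p-\alpha}v^{s-(s-1)\alpha/p}$ and apply Young with exponents $p/\alpha$ and $p/(p-\alpha)$; the gradient factor can be made a further small fraction of the left-hand side, and the remainder has the form $c\,b^{p/(p-\alpha)}\psi^p v^{s+\alpha/(p-\alpha)}$. A short algebraic computation gives $s + \alpha/(p-\alpha) - p\gamma = p(\alpha-p+1)/(p-\alpha) \ge 0$ (here the standing condition $\alpha \ge p-1$ is essential), so $v^{s+\alpha/(p-\alpha)} \le M^{p(\alpha-p+1)/(p-\alpha)} v^{p\gamma}$. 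Hypothesis~\eqref{l4.4.2} rewrites as $\esssup b \le (C_1/2) r^{\alpha-p} M^{p-\alpha-1}$; raising to the $p/(p-\alpha)$ power and combining, the $M$-exponents cancel exactly and yield $b^{p/(p-\alpha)} M^{p(\alpha-p+1)/(p-\alpha)} \le c\,r^{-p} \le c(r_2-r_1)^{-p}$, since $r_2 - r_1 \le r$. The boundary case $\alpha = p$ is simpler: the hypothesis reduces to $M\esssup b \le C_1/2$, whence $b|Dv|^p\psi^p v^s \le (C_1/(2M))|Dv|^p\psi^p v^s \le (C_1/2)|Dv|^p\psi^p v^{s-1}$, absorbing directly into the left-hand side.

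Combining the two absorptions and rewriting the left-hand side via $|Dv^\gamma|^p = \gamma^p v^{s-1}|Dv|^p$ will yield~\eqref{l4.4.3}. The main obstacle is the $b$-term absorption, where the precise cancellation of $M$-exponents reveals that hypothesis~\eqref{l4.4.2} is exactly calibrated to make the perturbation $b|Dv|^\alpha$ harmless for the Caccioppoli estimate; the case split between $\alpha < p$ and $\alpha = p$, and the reliance on $\alpha \ge p-1$ for the sign of $p(\alpha-p+1)/(p-\alpha)$, are the subtleties to flag. A secondary but routine technicality is the admissibility of the test function, handled by approximating $\psi$ by smooth cutoffs and, if needed, truncating $v$.
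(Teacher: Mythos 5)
Your proposal is correct and follows essentially the same route as the paper: the same test function $\psi^p v^{s}$ with $s=p\gamma-p+1$, the same Young-inequality splittings (exponents $p/\alpha$ and $p/(p-\alpha)$ for the $b$-term when $\alpha<p$, direct absorption when $\alpha=p$), and the same exact cancellation of the $M$-exponents via hypothesis~\eqref{l4.4.2}; the paper merely packages this by rescaling to $w=(v/m)^\gamma$ on the unit ball so that $w\le 1$. The one point to make explicit when writing it up is that the Young parameters must be taken proportional to $s$ so that the final constant $C$ is uniform in $\gamma\ge 1$ (this works because $s\ge\gamma$ and $p/(p-\alpha)\ge p$ for $\alpha\ge p-1$), since that uniformity is what the subsequent Moser iteration requires.
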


\begin{proof}
We denote
\begin{equation}
	m
	=
	\esssup_{
		B_r^z
	}
	v,
	\quad
	\mu
	=
	\esssup_{
		B_r^z
	}
	b,
	\label{pl4.4.1}
\end{equation}
and $\psi (x) = \psi_0 ((|x - z| - r_1) / (r_2 - r_1))$, 
where $\psi_0 \in C^\infty ({\mathbb R})$ is a non-increasing function such that
$
	\left.
		\psi_0
	\right|_{
		(- \infty, 0]
	}
	=
	1
$
and
$
	\left.
		\psi_0
	\right|_{
		[1, \infty)
	}
	=
	0.
$
It can be assumed that $m > 0$; otherwise~\eqref{l4.4.3} is evident.
We also put $\beta = \gamma - (p - 1) / p$.
It is easy to see that $\beta \ge 1 / p$.
Taking $\varphi = \psi^p v^{\beta p}$ in the integral inequality
$$
	- \int_{
		B_r^z
	}
	A (x, D v)
	D \varphi
	\, dx
	+
	\int_{
		B_r^z
	}
	b (x)
	|D v|^\alpha
	\varphi
	\, dx
	\ge
	0,
$$
we obtain
\begin{align*}
	p
	\beta
	\int_{
		B_r^z
	}
	\psi^p
	v^{\beta p - 1}
	A (x, D v)
	D v
	\, dx
	\le
	{}
	&
	- p
	\int_{
		B_r^z
	}
	\psi^{p - 1}
	v^{\beta p}
	A (x, D v)
	D \psi
	\, dx
	\\
	&
	+
	\int_{
		B_r^z
	}
	b (x)
	|D v|^\alpha
	\psi^p
	v^{\beta p}
	\, dx.
\end{align*}
Since
$$
	C_1
	\int_{
		B_r^z
	}
	\psi^p
	v^{\beta p - 1}
	|D v|^p
	\, dx
	\le
	\int_{
		B_r^z
	}
	\psi^p
	v^{\beta p - 1}
	A (x, D v)
	D v
	\, dx
$$
and
$$
	\int_{
		B_r^z
	}
	\psi^{p - 1}
	v^{\beta p}
	|A (x, D v)|
	|D \psi|
	\, dx
	\le
	C_2
	\int_{
		B_r^z
	}
	\psi^{p - 1}
	v^{\beta p}
	|D v|^{p - 1}
	|D \psi|
	\, dx,
$$
this implies the estimate
\begin{align*}
	p
	\beta
	C_1
	\int_{
		B_r^z
	}
	\psi^p
	v^{\beta p - 1}
	|D v|^p
	\, dx
	\le
	{}
	&
	p
	C_2
	\int_{
		B_r^z
	}
	\psi^{p - 1}
	v^{\beta p}
	|D v|^{p - 1}
	|D \psi|
	\, dx
	\\
	&
	+
	\int_{
		B_r^z
	}
	b (x)
	|D v|^\alpha
	\psi^p
	v^{\beta p}
	\, dx
\end{align*}
from which, denoting 
$w (y) = (v (r y + z) / m)^\gamma$ 
and 
$\eta (y) = \psi  (r y + z)$,
we have
\begin{align}
	\frac{
		p
		\beta
		C_1
	}{
		\gamma^p
	}
	\int_{
		B_1
	}
	\eta^p
	|D w|^p
	\, dy
	\le
	{}
	&
	\frac{
		p
		C_2
	}{
		\gamma^{p - 1}
	}
	\int_{
		B_1
	}
	\eta^{p - 1}
	w
	|D w|^{p - 1}
	|D \eta|
	\, dy
	\nonumber
	\\
	&
	+
	\frac{
		r^{p - \alpha} 
		m^{\alpha - p + 1}
		\mu 
	}{
		\gamma^\alpha
	}
	\int_{
		B_1
	}
	|D w|^\alpha
	\eta^p
	w^{
		(\alpha - p + 1) / \gamma + p - \alpha
	}
	\, dy
	\nonumber
	\\
	\le
	{}
	&
	\frac{
		p
		C_2
	}{
		\gamma^{p - 1}
	}
	\int_{
		B_1
	}
	\eta^{p - 1}
	w
	|D w|^{p - 1}
	|D \eta|
	\, dy
	\nonumber
	\\
	&
	+
	\frac{
		C_1
	}{
		2 
		\gamma^\alpha
	}
	\int_{
		B_1
	}
	|D w|^\alpha
	\eta^p
	w^{
		(\alpha - p + 1) / \gamma + p - \alpha
	}
	\, dy
	\label{pl4.4.2}
\end{align}
in accordance with condition~\eqref{l4.4.2}.
From the Young inequality, it follows that
\begin{align*}
	\frac{
		p
		C_2
	}{
		\gamma^{p - 1}
	}
	\int_{
		B_1
	}
	\eta^{p - 1}
	w
	|D w|^{p - 1}
	|D \eta|
	\, dy
	&
	\le
	\frac{
		p
		\beta
		C_1
	}{
		4
		\gamma^p
	}
	\int_{
		B_1
	}
	\eta^p
	|D w|^p
	\, dy
	\\
	&
	\phantom{{}\le{}}
	+
	\frac{
		c
	}{
		\beta^{p - 1}
	}
	\int_{
		B_1
	}
	w^p
	|D \eta|^p
	\, dy.
\end{align*}
In the case of $\alpha < p$, the Young inequality also implies the relation
\begin{align*}
	\frac{
		C_1
	}{
		2 
		\gamma^\alpha
	}
	\int_{
		B_1
	}
	|D w|^\alpha
	\eta^p
	w^{
		(\alpha - p + 1) / \gamma + p - \alpha
	}
	\, dy
	\le
	{}
	&
	\frac{
		p
		\beta
		C_1
	}{
		2
		\gamma^p
	}
	\int_{
		B_1
	}
	\eta^p
	|D w|^p
	\, dy
	\\
	&
	+
	\frac{
		c
	}{
		\beta^{\alpha / (p - \alpha)}
	}
	\int_{
		B_1
	}
	\eta^p
	w^{(\alpha - p + 1) p / (\gamma (p - \alpha)) + p}
	\, dy
	\\
	\le
	{}
	&
	\frac{
		p
		\beta
		C_1
	}{
		2
		\gamma^p
	}
	\int_{
		B_1
	}
	\eta^p
	|D w|^p
	\, dy
	\\
	&
	+
	\frac{
		c
	}{
		\beta^{\alpha / (p - \alpha)}
	}
	\int_{
		B_1
	}
	\eta^p
	w^p
	\, dy.
\end{align*}
In turn, for $\alpha = p$, one can assert that
\begin{align*}
	\frac{
		C_1
	}{
		2 
		\gamma^\alpha
	}
	\int_{
		B_1
	}
	|D w|^\alpha
	\eta^p
	w^{
		(\alpha - p + 1) / \gamma + p - \alpha
	}
	\, dy
	&
	=
	\frac{
		C_1
	}{
		2 
		\gamma^\alpha
	}
	\int_{
		B_1
	}
	\eta^p
	|D w|^p
	w^{1 / \gamma}
	\, dx
	\\
	&
	\le
	\frac{
		C_1
	}{
		2
		\gamma^p
	}
	\int_{
		B_1
	}
	\eta^p
	|D w|^p
	\, dx.
\end{align*}
Therefore, taking into account~\eqref{pl4.4.2} and the fact that 
$\beta = \gamma - (p - 1) / p \ge 1 / p$, we obtain
$$
	\int_{
		B_r
	}
	\eta^p
	|D w|^p
	\, dy
	\le
	c
	\int_{
		B_r
	}
	(\eta^p + |D \eta|^p)
	w^p
	\, dy.
$$
This implies the estimate
$$
	\int_{
		B_{r_1 / r}
	}
	|D w|^p
	\, dy
	\le
	c
	\| \psi_0 \|_{
		C^1 ({\mathbb R})
	}
	\left(
		\frac{
			r
		}{
			r_2 - r_1
		}
	\right)^p
	\int_{
		B_{r_2 / r}
	}
	w^p
	\, dy,
$$
whence~\eqref{l4.4.3} follows at once. 

The proof is completed.
\end{proof}

\begin{lemma}[Moser's inequality]\label{l4.5}
Under the hypotheses of Lemma~$\ref{l4.4}$, we have
\begin{equation}
	\esssup_{
		B_{r / 2}^z
	}
	v
	\le
	C
	r^{- n / p}
	\left(
		\int_{
			B_r^z
		}
		v^p
		\, dx
	\right)^{1 / p},
	\label{l4.5.1}
\end{equation}
where the constant $C > 0$ depends only on $n$, $p$, $\alpha$, $C_1$, and $C_2$.
\end{lemma}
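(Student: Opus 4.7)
The plan is to run the standard Moser iteration, using the Caccioppoli inequality from Lemma~\ref{l4.4} as the main analytic input. Set $\chi = n/(n-p)$ when $n > p$ and fix any $\chi > 1$ otherwise; in either case the Sobolev--Gagliardo--Nirenberg embedding yields $\|u\|_{L^{p\chi}(B_{r_2}^z)} \le c \|D u\|_{L^p(B_{r_2}^z)}$ for all $u \in C_0^\infty(B_{r_2}^z)$. For any $0 < r_1 < r_2 \le r$ and any $\gamma \ge 1$ I would introduce a cut-off $\eta \in C_0^\infty(B_{(r_1+r_2)/2}^z)$ with $\eta \equiv 1$ on $B_{r_1}^z$ and $|D\eta| \le c/(r_2-r_1)$, apply the Sobolev inequality to $\eta v^\gamma$, and bound the resulting $\int |D v^\gamma|^p$ term by Lemma~\ref{l4.4} used between the radii $(r_1+r_2)/2$ and $r_2$. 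After raising the outcome to the $1/\gamma$-th power, this produces the one-step estimate
\begin{equation*}
\|v\|_{L^{p\chi\gamma}(B_{r_1}^z)}
\le
\left(\frac{c}{r_2 - r_1}\right)^{1/\gamma}
\|v\|_{L^{p\gamma}(B_{r_2}^z)},
\end{equation*}
with a constant $c$ depending only on $n,p,\alpha,C_1,C_2$. Crucially the constant is independent of $\gamma$, which is what the statement of Lemma~\ref{l4.4} is designed to deliver.

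Next I would iterate with $\gamma_k = \chi^k$ and the geometric sequence of radii $r_k = r/2 + r/2^{k+1}$, $k = 0, 1, 2, \ldots$, so that $r_0 = r$, $r_k \downarrow r/2$, and $r_k - r_{k+1} = r/2^{k+2}$. Composing the one-step estimate $K$ times gives
\begin{equation*}
\|v\|_{L^{p\chi^K}(B_{r_K}^z)}
\le
\prod_{k=0}^{K-1}
\left(\frac{c \, 2^{k+2}}{r}\right)^{1/\chi^k}
\|v\|_{L^p(B_r^z)}.
\end{equation*}
Because $\sum_{k \ge 0} \chi^{-k}$ and $\sum_{k \ge 0} k\chi^{-k}$ are both finite, the product converges as $K \to \infty$. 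When $n > p$ the exponent of $1/r$ in the limit equals $\chi/(\chi-1) = n/p$ on the nose; when $n \le p$ the same scaling $r^{-n/p}$ is recovered by first rescaling $x \mapsto x/r$ to reduce to $r = 1$ before running the iteration and then reinserting the factor $r^{-n/p}$ dictated by the scaling of the $L^p$ norm. Letting $K \to \infty$ and using $\|v\|_{L^q(B_{r/2}^z)} \to \esssup_{B_{r/2}^z} v$ as $q \to \infty$ (valid since $v \in L_\infty(B_r^z)$ and $B_{r/2}^z$ has finite measure) converts the left-hand side into the essential supremum on $B_{r/2}^z$, yielding~\eqref{l4.5.1}.

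The main obstacle will be bookkeeping: one has to check that the Caccioppoli constant of Lemma~\ref{l4.4} is genuinely independent of $\gamma \ge 1$ (arranged in its proof via the explicit treatment of $\beta = \gamma - (p-1)/p \ge 1/p$ in Young's inequality) and that hypothesis~\eqref{l4.4.2} persists along every radius used in the iteration. The latter is automatic because every ball involved is contained in $B_r^z$, on which~\eqref{l4.4.2} is the standing assumption. Beyond this, the only work is to verify that the telescoped geometric-series bound on the product remains finite and carries the correct power of $r$.
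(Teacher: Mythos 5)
Your proposal is correct and follows essentially the same route as the paper: Moser iteration driven by the $\gamma$-uniform Caccioppoli estimate of Lemma~\ref{l4.4}, the Sobolev embedding, a geometric sequence of radii shrinking from $r$ to $r/2$, and a convergent infinite product (the paper likewise rescales to the unit ball and uses a subcritical exponent $\lambda < n/(n-p)$ where you take the critical one, but this is immaterial). No gaps.
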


\begin{proof}
We take a real number $\lambda$ satisfying the conditions
$1 < \lambda < n / (n - p)$ in the case of $n > p$ 
and 
$1 < \lambda$ in the case of $n \le p$.
Also denote $r_k = 1 / 2 + 1 / 2^{k + 1}$ and $p_k = \lambda^k p$, $k = 0, 1, 2, \ldots$.

We use Moser's iterative process. 
By Lemma~\ref{l4.4}, 
$$
	\| D v^{\lambda^k} \|_{
		L_p (
			B_{
				r_{k + 1} r
			}^z
		)
	}
	\le
	\frac{
		2^k
		c
	}{
		r
	}
	\| v^{\lambda^k} \|_{
		L_p (
			B_{
				r_k r
			}^z
		)
	}
$$
for any non-negative integer $k$, whence it follows that
$$
	\| D w^{\lambda^k} \|_{
		L_p (
			B_{
				r_{k + 1}
			}
		)
	}
	\le
	2^k
	c
	\| w^{\lambda^k} \|_{
		L_p (
			B_{
				r_k
			}
		)
	},
$$
where $w (y) = v (r y + z)$. This implies the inequality
$$
	\| w^{\lambda^k} \|_{
		W_p^1 (
			B_{
				r_{k + 1}
			}
		)
	}
	\le
	2^k
	c
	\| w^{\lambda^k} \|_{
		L_p (
			B_{
				r_k
			}
		)
	}.
$$
At the same time,
$$
	\| w^{\lambda^k} \|_{
		L_{\lambda p} (
			B_{
				r_{k + 1}
			}
		)
	}
	\le
	c
	\| w^{\lambda^k} \|_{
			W_p^1 (
			B_{
				r_{k + 1}
			}
		)
	}
$$
by the Sobolev embedding theorem~\cite{Mazya}.
Thus, we obtain
$$
	\| w^{\lambda^k} \|_{
		L_{\lambda p} (
			B_{
				r_{k + 1}
			}
		)
	}
	\le
	2^k
	c
	\| w^{\lambda^k} \|_{
		L_p (
			B_{
				r_k
			}
		)
	}
$$
or, in other words,
$$
	\| w \|_{
		L_{p_{k + 1}} (
			B_{
				r_{k + 1}
			}
		)
	}
	\le
	(2^k c)^{\lambda^{- k}}
	\| w \|_{
		L_{p_k} (
			B_{
				r_k
			}
		)
	},
	\quad
	k = 0, 1, 2, \ldots.
$$
The last formula allows us to assert that
$$
	\| w \|_{
		L_\infty (
			B_{
				1 / 2
			}
		)
	}
	\le
	c
	\| w \|_{
		L_{p} (
			B_1
		)
	},
$$
whence we immediately derive~\eqref{l4.5.1}.

The proof is completed.
\end{proof}

\begin{lemma}\label{l4.6}
Under the hypotheses of Lemma~$\ref{l4.4}$,
for any real number $\varepsilon > 0$ there is a real number $\delta > 0$ 
depending only on $n$, $p$, $\alpha$, $\varepsilon$, $C_1$, and $C_2$
such that the condition
$\mes \{ x \in B_r^z : v (x) > 0 \} \le \delta r^n$
implies the inequality
\begin{equation}
	\esssup_{
		B_{r / 8}^z
	}
	v
	\le
	\varepsilon
	\esssup_{
		B_r^z
	}
	v.
	\label{l4.6.1}
\end{equation}
\end{lemma}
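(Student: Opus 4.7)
The plan is to deduce the claim directly from Moser's inequality (Lemma~\ref{l4.5}) by controlling the $L^p$ norm of $v$ using only the measure of its support. No iteration is needed: a single application of Lemma~\ref{l4.5} on a slightly smaller ball, combined with a trivial H\"older-on-the-support estimate, should suffice.

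First I would check that the hypotheses of Lemmas~\ref{l4.4} and~\ref{l4.5} persist on any concentric sub-ball $B_\rho^z$ with $0 < \rho \le r$. Since $\esssup_{B_\rho^z} v \le \esssup_{B_r^z} v$, $\esssup_{B_\rho^z} b \le \esssup_{B_r^z} b$, and both exponents $p - \alpha$ and $\alpha - p + 1$ are non-negative in the range $p - 1 \le \alpha \le p$, one has
$$
    \rho^{p - \alpha}
    \left( \esssup_{B_\rho^z} v \right)^{\alpha - p + 1}
    \esssup_{B_\rho^z} b
    \le
    r^{p - \alpha}
    \left( \esssup_{B_r^z} v \right)^{\alpha - p + 1}
    \esssup_{B_r^z} b
    \le
    \frac{C_1}{2},
$$
so condition~\eqref{l4.4.2} transfers automatically.

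Next I would apply Lemma~\ref{l4.5} on $B_{r/4}^z$ to obtain
$$
    \esssup_{B_{r/8}^z} v
    \le
    c\,r^{-n/p}
    \left( \int_{B_{r/4}^z} v^p \, dx \right)^{1/p},
$$
where $c$ depends only on $n$, $p$, $\alpha$, $C_1$, and $C_2$. Since $v$ vanishes outside a set of measure at most $\delta r^n$, the integral on the right is bounded by
$$
    \int_{B_{r/4}^z} v^p \, dx
    \le
    \left( \esssup_{B_r^z} v \right)^p
    \mes \{ x \in B_r^z : v (x) > 0 \}
    \le
    \delta\,r^n \left( \esssup_{B_r^z} v \right)^p.
$$
Combining these two displays yields $\esssup_{B_{r/8}^z} v \le c\,\delta^{1/p}\esssup_{B_r^z} v$, and the choice $\delta = (\varepsilon/c)^p$ produces the desired inequality~\eqref{l4.6.1}.

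I do not expect any serious obstacle: the argument is essentially a one-shot application of Moser's inequality together with a H\"older bound on the support. The only point requiring attention is the transfer of condition~\eqref{l4.4.2} from $B_r^z$ to the sub-ball on which Lemma~\ref{l4.5} is invoked, and this follows at once from the fact that both $p - \alpha$ and $\alpha - p + 1$ are non-negative in the admissible range of $\alpha$.
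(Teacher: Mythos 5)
Your argument is correct, and it takes a genuinely different --- and simpler --- route than the paper. The paper never uses the trivial bound $\int_{B_{r/4}^z} v^p\,dx \le \delta r^n (\esssup_{B_r^z} v)^p$; instead it first invokes the Caccioppoli-type estimate of Lemma~\ref{l4.4} to control $\int_{B_{r/2}^z}|Dv|^p\,dx$ by $r^{-p}\int_{B_r^z}v^p\,dx$, then partitions the cube $Q_{r/2}^z$ into $k^n$ congruent subcubes with $k$ maximal so that the support of $v$ occupies less than half of each subcube, applies the Poincar\'e-type Lemma~\ref{l4.3} on each subcube to obtain $\int_{B_{r/4}^z}v^p\,dx \le c k^{-p}\int_{B_r^z}v^p\,dx$, and only then feeds this into Moser's inequality, concluding because $k\to\infty$ as the measure of the support tends to zero. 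Your one-shot version replaces the entire covering/Poincar\'e/Caccioppoli step by the observation that the $L_p$ norm over the support is already controlled by $(\mes\omega)^{1/p}\esssup v$; this yields the decay rate $\esssup_{B_{r/8}^z}v \le c\,(\mes\omega/r^n)^{1/p}\esssup_{B_r^z}v$ in place of the paper's $c\,(\mes\omega/r^n)^{1/n}$, but since the lemma is purely qualitative (for every $\varepsilon$ some $\delta$), and its only application (in the proof of Lemma~\ref{l4.7}) uses only the qualitative statement, either rate suffices. Your verification that condition~\eqref{l4.4.2} passes to concentric sub-balls because both $p-\alpha$ and $\alpha-p+1$ are non-negative is indeed the one point requiring care, and the paper implicitly relies on the same fact when it applies Lemma~\ref{l4.5} on $B_{r/4}^z$.
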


\begin{proof}
We denote $\omega = \mes \{ x \in B_r^z : v (x) > 0 \}$.
In the case of $\mes \omega = 0$, inequality~\eqref{l4.6.1} is obvious;
therefore, it can be assumed that $\mes \omega > 0$.

By Lemma~\ref{l4.4},
\begin{equation}
	\int_{
		B_{r / 2}^z
	}
	|D v|^p
	\, dx
	\le
	c
	r^{- p}
	\int_{
		B_r^z
	}
	v^p
	\, dx.
	\label{pl4.6.1}
\end{equation}

Let 
$\mes \omega < r^n / 2^{n + 1}$
and, moreover,
$k$ be the maximal positive integer such that
$\mes \omega < r^n / (2^{n + 1} k^n)$.
We take a family of disjoint open cubes 
$Q_{r / (2 k)}^{z_i}$,
$i = 1, \ldots, k^n$,
satisfying the condition
$
	\overline{Q_{r / 2}^z}
	= 
	\cup_{i = 1}^{k^n}
	\overline{Q_{r / (2 k)}^{z_i}}.
$
From Lemma~\ref{l4.3}, it follows that
$$	
	\int_{
		Q_{r / (2 k)}^{z_i}
	}
	v^p
	\, dx
	\le
	c
	k^{- p} r^p
	\int_{
		Q_{r / (2 k)}^{z_i}
	}
	|D v|^p
	\, dx,
	\quad
	i = 1, \ldots, k^n;
$$
therefore,
$$	
	\int_{
		Q_{r / 2}^z
	}
	v^p
	\, dx
	\le
	c
	k^{- p}
	r^p
	\int_{
		Q_{r / 2}^z
	}
	|D v|^p
	\, dx.
$$
Since $B_{r / 4}^z \subset Q_{r / 2}^z \subset B_{r / 2}^z$, 
the last inequality allows us to assert that
$$	
	\int_{
		B_{r / 4}^z
	}
	v^p
	\, dx
	\le
	c
	k^{- p}
	r^p
	\int_{
		B_{r / 2}^z
	}
	|D v|^p
	\, dx.
$$
Combining this with~\eqref{pl4.6.1}, we obtain
$$
	\int_{
		B_{r / 4}^z
	}
	v^p
	\, dx
	\le
	c
	k^{- p}
	\int_{
		B_r^z
	}
	v^p
	\, dx.
$$
At the same time, from Lemma~\ref{l4.5}, it follows that
$$
	\esssup_{
		B_{r / 8}^z
	}
	v
	\le
	c
	r^{- n / p}
	\left(
		\int_{
			B_{r / 4}^x
		}
		v^p
		\, dx
	\right)^{1 / p}.
$$
Thus, we have
$$
	\esssup_{
		B_{r / 8}^z
	}
	v
	\le
	\frac{
		c
		r^{- n / p}
	}{
		k
	}
	\left(
		\int_{
			B_r^z
		}
		v^p
		\, dx
	\right)^{1 / p}
	\le
	\frac{
		c
	}{
		k
	}
	\esssup_{
		B_r^z
	}
	v.
$$

To complete the proof, it remains to note that $k \to \infty$ as $\mes \omega \to 0$.
\end{proof}

\begin{lemma}\label{l4.7}
Let $v \in W_p^1 (B_r^z) \cap L_\infty (B_r^z)$, $r > 0$, $z \in {\mathbb R}^n$, 
be a non-negative solution of the inequality
\begin{equation}
	\diver A (x, D v)
	+
	b (x) |D v|^\alpha
	\ge
	\chi_\omega (x)
	h
	\quad
	\mbox{in } B_r^z
	\label{l4.7.1}
\end{equation}
satisfying condition~\eqref{l4.4.2}, 
where $h \ge 0$ is a real number,
$\omega = \{ x \in B_r^z : v (x) > 0 \}$,
and $\chi_\omega$ is the characteristic function of $\omega$.
If $\mes \omega > 0$ and, moreover,
\begin{equation}
	\lim_{\xi \to + 0}
	\esssup_{
		B_\xi^z
	}
	v
	\ge
	\gamma
	\esssup_{
		B_r^z
	}
	v
	\label{l4.7.2}
\end{equation}
for some real number $\gamma > 0$, then
$$
	\esssup_{
		B_r^z
	}
	v
	\ge
	C
	r^{p / (p - 1)}
	h^{1 / (p - 1)},
$$
where the constant $C > 0$ depends only on $n$, $p$, $\alpha$, $\gamma$, $C_1$, and $C_2$.
\end{lemma}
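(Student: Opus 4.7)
The plan is to test inequality~\eqref{l4.7.1} against a standard cutoff power, turn this into an upper bound of the shape $h\,\mes(\omega\cap B_{r/2}^z)\le cM^{p-1}r^{n-p}$ with $M=\esssup_{B_r^z}v$, and then extract from Lemma~\ref{l4.6} a matching measure lower bound $\mes(\omega\cap B_{r/2}^z)\ge cr^n$. Dividing these two estimates gives exactly $M\ge Cr^{p/(p-1)}h^{1/(p-1)}$.

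For the upper bound I fix a non-negative cutoff $\psi\in C_0^\infty(B_r^z)$ with $\psi\equiv1$ on $B_{r/2}^z$, $\mathrm{supp}\,\psi\subset B_{3r/4}^z$ and $|D\psi|\le c/r$, and take $\varphi=\psi^p$ in the weak form of~\eqref{l4.7.1}. Using $|A(x,\zeta)|\le C_2|\zeta|^{p-1}$ produces
\begin{equation*}
    h\int_\omega\psi^p\,dx\le pC_2\int_{B_{3r/4}^z}|Dv|^{p-1}\psi^{p-1}|D\psi|\,dx+\esssup_{B_r^z}b\int_{B_{3r/4}^z}|Dv|^\alpha\psi^p\,dx.
\end{equation*}
Lemma~\ref{l4.4} with $\gamma=1$ and radii $3r/4<r$ yields $\int_{B_{3r/4}^z}|Dv|^p\,dx\le cr^{n-p}M^p$, after which H\"older's inequality (with conjugate exponents $p/(p-1),p$ for the first term and $p/\alpha,p/(p-\alpha)$ for the second) bounds the first right-hand term by $cM^{p-1}r^{n-p}$ and the second by $c(\esssup_{B_r^z}b)M^\alpha r^{n-\alpha}$. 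Condition~\eqref{l4.4.2} reads $(\esssup_{B_r^z}b)M^{\alpha-p+1}r^{p-\alpha}\le C_1/2$, so the second term is at most $(C_1/2)M^{p-1}r^{n-p}$, and altogether $h\int_\omega\psi^p\,dx\le cM^{p-1}r^{n-p}$.

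For the lower bound I apply Lemma~\ref{l4.6} with $\varepsilon=\gamma/2$ to produce a threshold $\delta>0$, and then observe that~\eqref{l4.4.2} is inherited by every concentric sub-ball $B_\rho^z$, $0<\rho\le r$: because $p-\alpha\ge0$ and $\alpha-p+1\ge0$, each of the three factors on the left of~\eqref{l4.4.2} is non-increasing as $r$ decreases. Combining~\eqref{l4.7.2} with monotonicity of $\rho\mapsto\esssup_{B_\rho^z}v$ gives $\esssup_{B_{\rho/8}^z}v\ge\gamma M>(\gamma/2)\esssup_{B_\rho^z}v$ for every $\rho\in(0,r]$ (note $M>0$ since $\mes\omega>0$). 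The contrapositive of Lemma~\ref{l4.6} applied on $B_\rho^z$ therefore forces $\mes(\omega\cap B_\rho^z)>\delta\rho^n$; specializing to $\rho=r/2$ and using $\psi\equiv1$ on $B_{r/2}^z$ gives $\int_\omega\psi^p\,dx\ge\delta(r/2)^n$, which closes the argument.

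The main technical obstacle is the transfer of~\eqref{l4.4.2} to all sub-balls, which is what allows Lemma~\ref{l4.6} to run at the scale of the concentric balls whose suprema are controlled by~\eqref{l4.7.2}; the restriction $p-1\le\alpha\le p$ on the exponent in~\eqref{1.1} is precisely what makes each factor in~\eqref{l4.4.2} monotone in $r$. The Caccioppoli bound from Lemma~\ref{l4.4} together with the H\"older bookkeeping that absorbs the $b$-term into the principal part is then routine.
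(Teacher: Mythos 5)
Your argument is correct and is essentially the paper's own proof: test \eqref{l4.7.1} against a radial cutoff, bound the $A$-term and the $b$-term via H\"older together with the Caccioppoli estimate of Lemma~\ref{l4.4}, absorb the $b$-term using \eqref{l4.4.2}, and match this against the measure lower bound $\mes(\omega\cap B_{cr}^z)\ge cr^n$ coming from the contrapositive of Lemma~\ref{l4.6} and \eqref{l4.7.2} (the paper works on $B_{r/4}^z$ with $\varphi$ itself rather than $\psi^p$ on $B_{r/2}^z$, and absorbs the $b$-term one line later, but these are cosmetic differences). Your explicit check that \eqref{l4.4.2} is inherited by concentric sub-balls, using $p-\alpha\ge0$ and $\alpha-p+1\ge0$, makes precise a step the paper leaves implicit when invoking Lemma~\ref{l4.6} at a smaller scale.
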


\begin{proof}
We put $\varphi (x) = \varphi_0 (|x - z| / r)$, 
where $\varphi_0 \in C^\infty ({\mathbb R})$ is a non-increasing function such that
$
	\left.
		\varphi_0
	\right|_{
		(- \infty, 1 / 4]
	}
	=
	1
$
and
$
	\left.
		\varphi_0
	\right|_{
		[1 / 2, \infty)
	}
	=
	0.
$
Also let $m$ and $\mu$ be the real numbers defined by~\eqref{pl4.4.1}.
It is clear that $m > 0$ since $\mes \omega > 0$.

Taking into account~\eqref{l4.7.1}, we obtain
$$
	- \int_{B_r^z}
	A (x, D v)
	D \varphi
	\, dx
	+
	\int_{B_r^z}
	b (x)
	|D v|^\alpha
	\varphi
	\, d x
	\ge
	\int_{B_r^z}
	\chi_\omega (x)
	h
	\varphi
	\, d x,
$$
whence it follows that
$$
	\int_{B_r^z}
	|A (x, D v)|
	|D \varphi|
	\, dx
	+
	\int_{B_r^z}
	b (x)
	|D v|^\alpha
	\varphi
	\, d x
	\ge
	h
	\mes 
	\{ 
		\omega 
		\cap 
		B_{r / 4}^z 
	\}.
$$
On the other hand,
$$
	\mes 
	\{ 
		\omega 
		\cap 
		B_{r / 4}^z 
	\}
	\ge
	c
	r^n
$$
in accordance with Lemma~\ref{l4.6} and relation~\eqref{l4.7.2}.
Therefore, one can claim that
\begin{equation}
	\int_{B_r^z}
	|A (x, D v)|
	|D \varphi|
	\, dx
	+
	\int_{B_r^z}
	b (x)
	|D v|^\alpha
	\varphi
	\, d x
	\ge
	c
	r^n
	h.
	\label{pl4.7.1}
\end{equation}
Using the H\"older inequality, we have
\begin{align*}
	\int_{B_r^z}
	|A (x, D v)|
	|D \varphi|
	\, dx
	&
	\le
	C_2
	\int_{
		B_{r / 2}^z
	}
	|D v|^{p - 1}
	|D \varphi|
	\, dx
	\\
	&
	\le
	C_2
	\left(
		\int_{
			B_{r / 2}^z
		}
		|D v|^p
		\, dx
	\right)^{(p - 1) / p}
	\left(
		\int_{
			B_{r / 2}^z
		}
		|D \varphi|^p
		\, dx
	\right)^{1 / p}
	\\
	&
	\le
	c
	r^{
		(n - p) / p
	}
	\left(
		\int_{
			B_{r / 2}^z
		}
		|D v|^p
		\, dx
	\right)^{(p - 1) / p}.
\end{align*}
At the same time,
$$
	\int_{B_r^z}
	b (x)
	|D v|^\alpha
	\varphi
	\, d x
	\le
	c
	r^{n (p - \alpha) / p}
	\mu
	\left(
		\int_{
			B_{r / 2}^z
		}
		|D v|^p
		\, d x
	\right)^{\alpha / p}.
$$
Really, the last estimate is obvious for $\alpha = p$, whereas in the case of $\alpha < p$, 
it follows from the H\"older inequality
$$
	\int_{B_r^z}
	b (x)
	|D v|^\alpha
	\varphi
	\, d x
	\le
	\mu
	\left(
		\int_{
			B_{r / 2}^z
		}
		|D v|^p
		\, d x
	\right)^{\alpha / p}
	\left(
		\int_{
			B_{r / 2}^z
		}
		\varphi^{p / (p - \alpha)}
		\, d x
	\right)^{(p - \alpha) / p}.
$$
Hence, formula~\eqref{pl4.7.1} allows us to assert that
$$
	r^{
		(n - p) / p
	}
	\left(
		\int_{
			B_{r / 2}^z
		}
		|D v|^p
		\, dx
	\right)^{(p - 1) / p}
	+
	r^{n (p - \alpha) / p}
	\mu
	\left(
		\int_{
			B_{r / 2}^z
		}
		|D v|^p
		\, d x
	\right)^{\alpha / p}
	\ge
	c
	r^n
	h.
$$
Since
$$
	\int_{
		B_{r / 2}^z
	}
	|D v|^p
	\, dx
	\le
	c
	r^{- p}
	\int_{
		B_r^z
	}
	v^p
	\, dx
	\le
	c
	r^{n - p}
	m^p
$$
by Lemma~\ref{l4.4}, this implies the estimate
$$
	m^{p - 1}
	+
	r^{p - \alpha}
	m^\alpha
	\mu
	\ge
	c
	r^p
	h,
$$
whence in accordance with the condition
$$
	r^{p - \alpha}
	m^\alpha
	\mu
	\le 
	\frac{
	C_1
	}{
		2
	}
	m^{p - 1}
$$
which follows from~\eqref{l4.4.2} we obtain
$$
	m^{p - 1}
	\ge
	c
	r^p
	h.
$$

The proof is completed.
\end{proof}

\begin{lemma}\label{l4.0}
Let $v \in W_p^1 (B_{r_1, r_2}) \cap L_\infty (B_{r_1, r_2})$
be a non-negative solution of the inequality
\begin{equation}
	\diver A (x, D v)
	+
	b (x) |D v|^\alpha
	\ge
	\chi_\omega (x)
	h
	\quad
	\mbox{in } B_{r_1, r_2},
	\label{l4.0.1}
\end{equation}
where  $0 < r_1 < r_2$ and $h \ge 0$  are real numbers,
$\omega = \{ x \in B_{r_1, r_2} : v (x) > 0 \}$,
and $\chi_\omega$ is the characteristic function of $\omega$.
If there exists a real number $m > 0$ such that ${M (r; v)} = m$ for all $r \in (r_1, r_2)$,
then $h = 0$.
\end{lemma}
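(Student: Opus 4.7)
I shall argue by contradiction: assume $h > 0$ and derive a contradiction by applying Lemma~\ref{l4.7} at a suitably chosen peak point of $v$.

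The first step is to locate such a peak point. The hypothesis $M(r; v) = m > 0$ for all $r \in (r_1, r_2)$ says that on every sphere in the annulus the essential supremum of $v$ equals $m$; in particular, for every $\epsilon > 0$ the level set $\{x \in B_{r_1, r_2} : v(x) > m - \epsilon\}$ meets every sphere $S_\rho$ ($\rho \in (r_1, r_2)$) in a set of positive $(n-1)$-dimensional measure. Combining this with upper semi-continuity of the map $x \mapsto v^\ast(x) := \lim_{\xi \to 0^+} \esssup_{B_\xi^x} v$ and compactness of the sphere $S_{(r_1 + r_2)/2}$, one produces a point $z \in B_{r_1, r_2}$ with $v^\ast(z) = m$.

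Next, I would choose $R > 0$ so that $B_R^z \subset B_{r_1, r_2}$ and that condition~\eqref{l4.4.2} of Lemma~\ref{l4.4} holds, namely
\[
    R^{p - \alpha} m^{\alpha - p + 1} \esssup_{B_R^z} b \le \frac{C_1}{2}.
\]
Since $v \le m$ a.e.\ on $B_{r_1, r_2}$ (which follows from the hypothesis via a Fubini argument on spherical slices) and $b \in L_{\infty, loc}$, this holds for sufficiently small $R$ because $p - \alpha \ge 0$ and $\alpha - p + 1 \ge 0$. Now apply Lemma~\ref{l4.7} to $v$ on $B_R^z$: the condition $\mes\{v > 0\} \cap B_R^z > 0$ is ensured by $v^\ast(z) = m > 0$, and condition~\eqref{l4.7.2} holds with $\gamma = 1$ since $v^\ast(z) = m \ge \esssup_{B_R^z} v$. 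The conclusion of Lemma~\ref{l4.7} yields $\esssup_{B_R^z} v \ge C R^{p/(p-1)} h^{1/(p-1)}$; combined with $\esssup_{B_R^z} v \le m$, this forces
\[
    h \le \left(\frac{m}{C}\right)^{p - 1} R^{-p}.
\]

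The main obstacle is to promote this finite upper bound on $h$ to the conclusion $h = 0$. The above inequality depends on $R$, which is geometrically constrained to $R \le (r_2 - r_1)/2$ by the ball-fitting requirement, so a single application of Lemma~\ref{l4.7} does not suffice. The crucial input needed to close the argument is the fact that the hypothesis $M(r; v) = m$ holds for \emph{every} $r \in (r_1, r_2)$, not merely at one radius: this allows peak points to be located throughout the annulus, and by iterating the construction on an appropriate sequence of overlapping balls or nested sub-annuli $B_{r_1', r_2'}$ with $r_1 < r_1' < r_2' < r_2$ (on each of which the hypothesis is inherited), one can chain the estimates and pass to the limit to force $h = 0$. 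The technical heart of the argument lies in arranging this chaining carefully so that the geometric dependence on $R$ drops out, and this is the step that requires the most delicate bookkeeping.
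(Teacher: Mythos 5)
There is a genuine gap, and you have correctly located it yourself: a single application of Lemma~\ref{l4.7} to $v$ at a peak point only yields $m^{p-1}\ge C r^p h$, a finite bound on $h$, and the chaining over sub-annuli that you propose to ``promote'' this to $h=0$ cannot work. The obstruction is quantitative: every application of Lemma~\ref{l4.7} on any ball $B_R^z$ contained in the annulus returns an inequality of the form $\esssup_{B_R^z} v \ge C R^{p/(p-1)} h^{1/(p-1)}$, and on every such ball centred at a peak point the left-hand side equals $m$, which never decreases, while $R$ is capped by $(r_2-r_1)/2$. Iterating over nested sub-annuli only shrinks the admissible $R$ and therefore \emph{weakens} the bound; no limit passage over such a family can drive $h$ to zero.

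The missing idea in the paper's proof is a vertical truncation rather than a horizontal one. Fix the peak point $z\in S_{(r_1+r_2)/2}$ with $\lim_{\xi\to+0}\esssup_{B_\xi^z}v=m$ and a fixed radius $0<r<(r_2-r_1)/2$, and for small $\varepsilon>0$ set $\tilde v=\max\{v-m+\varepsilon,\,0\}$. By Corollary~\ref{c4.1} (the truncation lemma), $\tilde v$ still solves $\diver A(x,D\tilde v)+b|D\tilde v|^\alpha\ge \chi_{\tilde\omega}h$ with the \emph{same} $h$ on its positivity set $\tilde\omega\subset\omega$. Now $\esssup_{B_r^z}\tilde v\le\varepsilon$ while $\lim_{\xi\to+0}\esssup_{B_\xi^z}\tilde v=\varepsilon$, so condition~\eqref{l4.7.2} holds with $\gamma=1$, and condition~\eqref{l4.4.2} holds for $\tilde v$ once $\varepsilon^{\alpha-p+1}r^{p-\alpha}\esssup b\le C_1/4$ (satisfiable with $r$ fixed since $\alpha-p+1\ge0$). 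Lemma~\ref{l4.7} then gives $\varepsilon\ge c\,r^{p/(p-1)}h^{1/(p-1)}$ with $r$ \emph{independent of} $\varepsilon$, and letting $\varepsilon\to+0$ forces $h=0$. Note also that the flatness hypothesis $M(\cdot\,;v)\equiv m$ is what guarantees both that $v\le m$ almost everywhere (so $\esssup\tilde v\le\varepsilon$) and that the truncated function still attains its full supremum infinitesimally near $z$; your write-up uses the hypothesis only to locate peak points, not for this purpose.
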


\begin{proof}
We put $r_* = (r_1 + r_2) / 2$. 
There is a point $z \in S_{r_*}$ such that
$$
	\lim_{\xi \to + 0}
	\esssup_{
		B_\xi^z
	}
	v
	=
	m.
$$
Take real numbers $0 < \varepsilon < m$ and $0 < r < (r_2 - r_1) / 2$
satisfying the relation
$$
	\varepsilon^{\alpha - p + 1}
	r^{p - \alpha}
	\esssup_{
		B_{r_1, r_2}
	}
	b
	\le
	\frac{
		C_1
	}{
		4
	}.
$$
We denote
$$
	\tilde v (x)
	=
	\max
	\{
		v (x) - m + \varepsilon,
		0
	\}.
$$
By Corollary~\ref{l4.1}, $\tilde v$ is a non-negative solution of the inequality
\begin{equation}
	\diver A (x, D \tilde v)
	+
	b (x) |D \tilde v|^\alpha
	\ge
	\chi_{\tilde \omega} (x)
	h
	\quad
	\mbox{in } B_r^z,
	\label{pl4.0.1}
\end{equation}
where $\tilde \omega = \{x \in B_r^z : \tilde v (x) > 0 \} \subset \omega$.
Thus, 
$$
	\varepsilon
	\ge
	c
	r^{p / (p - 1)}
	h^{1 / (p - 1)}
$$
by Lemma~\ref{l4.7}.
Passing in the last expression to the limit as $\varepsilon \to +0$, we complete the proof.
\end{proof}

\begin{lemma}\label{l4.8}
Let $v \in W_p^1 (B_{r_1, r_2}) \cap L_\infty (B_{r_1, r_2})$
be a non-negative solution of~\eqref{l4.0.1}
such that ${M (\cdot; v)}$ is a non-decreasing function on $[r_1, r_2]$,
${M (r_2; v)} > {M (r_1; v)} \ge \gamma {M (r_2; v)}$
and, moreover,
\begin{equation}
	(M (r_2; v) - M (r_1; v))^{\alpha - p + 1}
	(r_2 - r_1)^{p - \alpha}
	\esssup_{
		B_{r_1, r_2}
	}
	b
	\le
	\frac{
		C_1
	}{
		4
	},
	\label{l4.8.2}
\end{equation}
where  $0 < r_1 < r_2$, $0 < \gamma < 1$, and $h \ge 0$  are real numbers,
$\omega = \{ x \in B_{r_1, r_2} : v (x) > 0 \}$,
and $\chi_\omega$ is the characteristic function of $\omega$.
Then
\begin{equation}
	(M (r_2; v) - M (r_1; v))^{p - 1}
	\ge
	C
	(r_2 - r_1)^p
	h,
	\label{l4.8.3}
\end{equation}
where the constant $C > 0$ depends only on $n$, $p$, $\alpha$, $\gamma$, $C_1$, and $C_2$.
\end{lemma}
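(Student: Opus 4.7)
The plan is to apply Lemma~\ref{l4.7} to a truncated version of $v$ on a ball of radius comparable to $r_2-r_1$ lying inside $B_{r_1,r_2}$. Set $V = (v - M(r_1;v))_+$, which by Corollary~\ref{c4.1} is a non-negative element of $W_p^1(B_{r_1,r_2}) \cap L_\infty(B_{r_1,r_2})$ satisfying
\[
  \diver A(x, DV) + b(x)|DV|^\alpha \ge \chi_{\{V > 0\}}(x)\,h \qquad \text{in } B_{r_1,r_2}.
\]
Since the trace of $V$ on $S_{r_1}$ vanishes a.e., Lemma~\ref{l4.1} shows that $V$ extends by zero to $B_{r_1}$ as an element of $W_p^1(B_{r_2})$ still satisfying the inequality on all of $B_{r_2}$; Lemma~\ref{l4.2} then yields $\esssup_{B_{r_2}} V \le m := M(r_2;v) - M(r_1;v)$.

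Observe that condition~\eqref{l4.4.2} for $V$ on any ball $B_R^z \subset B_{r_1,r_2}$ with $R \le r_2-r_1$ follows immediately from hypothesis~\eqref{l4.8.2}: since $\alpha-p+1 \ge 0$ and $\esssup_{B_R^z} V \le m$,
\[
  R^{p-\alpha}(\esssup_{B_R^z} V)^{\alpha-p+1}\esssup_{B_R^z} b \le (r_2-r_1)^{p-\alpha} m^{\alpha-p+1}\esssup_{B_{r_1,r_2}} b \le \frac{C_1}{4}.
\]
Thus only the condition~\eqref{l4.7.2} still has to be arranged.

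To locate a ball $B_R^{z^*}\subset B_{r_1,r_2}$ with $R \sim r_2-r_1$ on which~\eqref{l4.7.2} holds with a universally positive constant, I would perform a dyadic dichotomy at $\bar r = (r_1+r_2)/2$. In the case $M(\bar r; v) \ge M(r_1;v) + m/2$, pick $z^* \in S_{\bar r}$ at which the trace of $V$ is at least $m/4$; such $z^*$ exists since the essential supremum of the trace of $V$ on $S_{\bar r}$ is at least $m/2$. With $R = (r_2-r_1)/4$ the ball $B_R^{z^*}$ lies inside $B_{r_1,r_2}$, and the Lebesgue density of the trace gives $\lim_{\xi\to 0^+}\esssup_{B_\xi^{z^*}} V \ge m/4 \ge \tfrac14 \esssup_{B_R^{z^*}} V$. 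Lemma~\ref{l4.7} then produces $\esssup_{B_R^{z^*}} V \ge c R^{p/(p-1)} h^{1/(p-1)}$, and combining with $\esssup_{B_R^{z^*}} V \le m$ gives~\eqref{l4.8.3} at once.

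In the complementary case $M(\bar r; v) < M(r_1;v) + m/2$, iterate the argument on the further-truncated function $(v - M(r_1;v) - m/2)_+$ over the sub-annulus $B_{\bar r, r_2}$, whose increment is at least $m/2$ and whose width is $(r_2-r_1)/2$; hypotheses~\eqref{l4.8.2} and the ratio condition are preserved by monotonicity of $M(\cdot;v)$ and by the sign of $\alpha-p+1$. The principal obstacle is ensuring that this iteration terminates after a bounded number of steps, so that the constant in~\eqref{l4.8.3} remains uniform. Here I expect to exploit the left-continuity of $r\mapsto M(r;v)$ (Corollary~\ref{c4.2}, adapted to the annulus setting) together with Lemma~\ref{l4.0}, which forbids $M(\cdot;v)$ from being constant on any subinterval where it takes positive values; combined with the ratio condition $M(r_1;v)\ge\gamma M(r_2;v)$, these facts should force the dichotomy to exit in the first case within a number of levels controlled by $n$, $p$, $\alpha$, $\gamma$, $C_1$, and $C_2$.
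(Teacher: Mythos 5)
Your Case A is essentially sound (modulo the slightly loose "Lebesgue density of the trace" step: the correct device, which the paper also uses, is that there exists $z^*$ on the sphere with $\lim_{\xi\to 0^+}\esssup_{B_\xi^{z^*}}V\ge\esssup_{S_{\bar r}}V$). The genuine gap is Case B. Your iteration halves the annulus width at each stage while retaining at least half of the remaining increment, and each time you finally exit into Case A at level $k$ you only get $m^{p-1}\ge c\,2^{-kp}(r_2-r_1)^p h$, so the constant in~\eqref{l4.8.3} degrades geometrically in $k$. Nothing you invoke bounds $k$: take $M(r_2;v)-M(\rho;v)=m\bigl((r_2-\rho)/(r_2-r_1)\bigr)^{s}$ with $0<s<1$. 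This profile is continuous, strictly increasing (so Lemma~\ref{l4.0} and left-continuity give no contradiction), compatible with $M(r_1;v)\ge\gamma M(r_2;v)$, and satisfies
$M(r_2;v)-M(\rho_{k+1};v)=2^{-s}\bigl(M(r_2;v)-M(\rho_k;v)\bigr)>\tfrac12\bigl(M(r_2;v)-M(\rho_k;v)\bigr)$
for every $k$, so the dichotomy stays in Case B forever. The ratio condition controls the size of $M(r_1;v)$ relative to $M(r_2;v)$, not the modulus of growth of $M(\cdot;v)$ near $r_2$, so it cannot force termination.

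The paper sidesteps the iteration entirely by choosing the truncation level adaptively rather than at $M(r_1;v)$. It fixes $r_*=(r_1+r_2)/2$, picks $z\in S_{r_*}$ with $\lim_{\xi\to0^+}\esssup_{B_\xi^{z}}v\ge M(r_*;v)$, and (when $M(r_2;v)<2M(r_*;v)$) truncates at the level $2M(r_*;v)-M(r_2;v)$, i.e.\ sets $\tilde v=\max\{v+M(r_2;v)-2M(r_*;v),0\}$. Then the value at the center is at least $M(r_2;v)-M(r_*;v)$ while $\esssup\tilde v=2\bigl(M(r_2;v)-M(r_*;v)\bigr)$, so~\eqref{l4.7.2} holds with the universal constant $1/2$ in a single step, whatever the profile of $M(\cdot;v)$; the case $M(r_2;v)\ge 2M(r_*;v)$ needs no truncation, and the degenerate case $M(r_2;v)=M(r_*;v)$ is handled by Lemma~\ref{l4.0}. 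To repair your argument, replace the recursion in Case B by this shifted truncation centered on the middle sphere.
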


\begin{proof}
We denote 
$r = (r_2 - r_1) / 2$ and $r_* = (r_1 + r_2) / 2$.
Take a point $z \in S_{r_*}$ such that
$$
	\lim_{\xi \to + 0}
	\esssup_{
		B_\xi^z
	}
	v
	\ge
	M (r_*; v).
$$
By the maximum principle, we have
$$
	M (r_2; v)
	=
	\esssup_{
		B_{r_1, r_2}
	}
	v
	\ge
	\esssup_{
		B_r^z
	}
	v.
$$
It can also be seen that
$$
	M (r_*; v)
	\ge
	M (r_1; v)
	\ge
	\gamma
	M (r_2; v).
$$
If ${M (r_2; v)} = {M (r_*; v)}$, then~\eqref{l4.8.3} is obvious since $h = 0$ by Lemma~\ref{l4.0}.
Hence, one can assume that ${M (r_2; v)} > {M (r_*; v)}$.
In the case of ${M (r_2; v)} \ge {2 M (r_*; v)}$, we obtain
$$
	\esssup_{
		B_r^z
	}
	v
	\le
	M (r_2; v)
	\le
	2 (M (r_2; v) - M (r_*; v));
$$
therefore,
\begin{align*}
	r^{p - \alpha}
	\left(
		\esssup_{
			B_r^z
		}
		v
	\right)^{\alpha - p + 1}
	\esssup_{
		B_r^z
	}
	b
	\le
	{}
	&
	2^{1 + 2 (\alpha - p)}
	(M (r_2; v) - M (r_*; v))^{\alpha - p + 1}
	\\
	&
	{\times{}}
	(r_2 - r_1)^{p - \alpha}
	\esssup_{
		B_{r_1, r_2}
	}
	b
	\\
	\le
	{}
	&
	\frac{C_1}{2}
\end{align*}
in accordance with~\eqref{l4.8.2}. Thus, Lemma~\ref{l4.7} allows us to assert that
$$
	M (r_2; v) - M (r_*; v)
	\ge
	\frac{1}{2}
	\esssup_{
		B_r^z
	}
	v
	\ge
	c
	r^{p / (p - 1)}
	h^{1 / (p - 1)},
$$
whence~\eqref{l4.8.3} follows at once.

Now, let ${M (r_2; v)} < {2 M (r_*; v)}$.
We put
$$
	\tilde v (x)
	=
	\max
	\{
		v (x) + {M (r_2; v)} - 2 {M (r_*; v)},
		0
	\}.
$$
By Corollary~\ref{c4.1}, the function $\tilde v$ is a non-negative solution 
of inequality~\eqref{pl4.0.1},
where $\tilde \omega = \{x \in B_r^z : \tilde v (x) > 0 \} \subset \omega$ as before.
In addition, we have
$$
	\lim_{\xi \to + 0}
	\esssup_{
		B_\xi^z
	}
	\tilde v
	\ge
	M (r_2; v) - M (r_*; v).
$$
and
$$
	\esssup_{
		B_r^z
	}
	\tilde v
	\le
	\esssup_{
		B_{r_1, r_2}
	}
	\tilde v
	=
	2 (M (r_2; v) - M (r_*; v)).
$$
Thus,~\eqref{l4.8.2} implies the estimate
\begin{align*}
	r^{p - \alpha}
	\left(
		\esssup_{
			B_r^z
		}
		{\tilde v}
	\right)^{\alpha - p + 1}
	\esssup_{
		B_r^z
	}
	b
	\le
	{}
	&
	2^{1 + 2 (\alpha - p)}
	(M (r_2; v) - M (r_*; v))^{\alpha - p + 1}
	\\
	&
	{\times{}}
	(r_2 - r_1)^{p - \alpha}
	\esssup_{
		B_{r_1, r_2}
	}
	b
	\\
	\le
	{}
	&
	\frac{C_1}{2}.
\end{align*}
To complete the proof, it remains to note that 
$$
	M (r_2; v) - M (r_*; v)
	\ge
	\frac{1}{2}
	\esssup_{
		B_r^z
	}
	\tilde v
	\ge
	c
	r^{p / (p - 1)}
	h^{1 / (p - 1)}
$$
according to Lemma~\ref{l4.7}.
\end{proof}

\begin{lemma}\label{l4.9}
Let $\eta : (r_1, r_2) \to [0, \infty)$ be a non-decreasing function such that
${\eta (r_1 + 0)} = 0$,
${\eta (r_2 - 0)} > \varepsilon$ and, moreover, 
${\eta (r - 0)} = {\eta (r)}$ for all $r \in (r_1, r_2)$,
where $r_1 < r_2$ and $\varepsilon > 0$ are real numbers. 
Then there is a real number $\xi \in (r_1, r_2)$ satisfying the conditions
${\eta (\xi)} \le \varepsilon$ and ${\eta (\xi + 0)} \ge \varepsilon$.
\end{lemma}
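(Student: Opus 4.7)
The plan is to define $\xi$ as the supremum of the ``sub-level'' set and then use the monotonicity together with left-continuity of $\eta$ to verify both required inequalities. Concretely, set
$$
    \xi
    =
    \sup
    \{
        r \in (r_1, r_2)
        :
        \eta (r) \le \varepsilon
    \}.
$$
First I would check that $\xi \in (r_1, r_2)$. Since $\eta (r_1 + 0) = 0 < \varepsilon$, for any sufficiently small $\delta > 0$ we have $\eta (r_1 + \delta) < \varepsilon$, so the set is non-empty and $\xi > r_1$. On the other hand, $\eta (r_2 - 0) > \varepsilon$ together with monotonicity yields $\eta (r) > \varepsilon$ for all $r$ in some left neighborhood of $r_2$; hence $\xi < r_2$.

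Next I would verify $\eta (\xi) \le \varepsilon$. Because $\eta$ is non-decreasing, the set above is of the form $(r_1, \xi) \cap \Lambda$ with $\Lambda$ either equal to $(r_1, \xi]$ or $(r_1, \xi)$; in either case every $r \in (r_1, \xi)$ satisfies $\eta (r) \le \varepsilon$. Passing to the limit $r \to \xi - 0$ and using the left-continuity assumption $\eta (\xi - 0) = \eta (\xi)$ gives $\eta (\xi) \le \varepsilon$.

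Finally, for $\eta (\xi + 0) \ge \varepsilon$ I would observe that by definition of supremum, every $r \in (\xi, r_2)$ satisfies $\eta (r) > \varepsilon$; taking the limit as $r \to \xi + 0$ immediately yields $\eta (\xi + 0) \ge \varepsilon$. There is no significant obstacle here: the only minor care needed is in the first paragraph, to be sure that $\eta (r_1 + 0) = 0$ genuinely produces points of the sub-level set (handled by the definition of a right-limit), and that the strict inequality $\eta (r_2 - 0) > \varepsilon$ prevents $\xi$ from reaching $r_2$ (handled by monotonicity). The three verifications are otherwise mechanical and rely only on the definition of the supremum and the left-continuity hypothesis.
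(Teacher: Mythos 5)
Your proof is correct, but it takes a different route from the paper. You characterize $\xi$ explicitly as $\sup\{r \in (r_1,r_2) : \eta(r) \le \varepsilon\}$ and then read off the two inequalities from the structure of the sub-level set (a lower interval, by monotonicity), using $\eta(r_1+0)=0$ to get non-emptiness, $\eta(r_2-0)>\varepsilon$ to keep $\xi$ away from $r_2$, and left-continuity to pass from $\eta(\xi-0)\le\varepsilon$ to $\eta(\xi)\le\varepsilon$. The paper instead runs a bisection: it builds nested intervals $[\mu_i,m_i]$ with $\eta(\mu_i)\le\varepsilon$ and $\eta(m_i)>\varepsilon$, lets both endpoints converge to a common limit $\xi$, rules out $\xi=r_1$ and $\xi=r_2$ by the same two boundary hypotheses, and then extracts the two inequalities from the approximating sequences. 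The two arguments locate essentially the same point; yours is shorter and makes the role of each hypothesis more transparent, while the paper's is a self-contained constructive scheme that avoids any discussion of the shape of the sub-level set. All the steps you flag as ``mechanical'' indeed go through: the only place where care is genuinely needed is the deduction $\eta(\xi)\le\varepsilon$, which really does require the left-continuity hypothesis exactly as you use it.
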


\begin{proof}
Consider sequences of real numbers $\{ \mu_i \}_{i=1}^\infty$ and $\{ m_i \}_{i=1}^\infty$
constructed by induction.
We put $\mu_1 = r_1$ and $m_1 = r_2$. Assume further that $\mu_i$ and $m_i$ are already known.
If $\eta ((\mu_i + m_i) / 2) > \varepsilon$, 
then we take $\mu_{i+1} = \mu_i$ and $m_{i+1} = (\mu_i + m_i) / 2$.
Otherwise we take $\mu_{i+1} =  (\mu_i + m_i) / 2$ and $m_{i+1} = m_i$.

It can easily be seen that 
$0 < m_i - \mu_i \le 2^{1 - i} (r_2 - r_1)$
and
$[\mu_{i+1}, m_{i+1}] \subset [\mu_i, m_i]$ 
for all $i = 1, 2, \ldots$.
Therefore, 
$$
	\lim_{i \to \infty} \mu_i
	=
	\lim_{i \to \infty} m_i
	=
	\xi
$$
for some $\xi \in [r_1, r_2]$.
If $\xi = r_1$, then 
$$
	\eta (r_1 + 0)
	=
	\lim_{i \to \infty} 
	\eta (m_i)
	\ge
	\varepsilon.
$$
This contradicts the fact that $\eta (r_1 + 0) = 0$.
On the other hand, if $\xi = r_2$, then 
$$
	\eta (r_2 - 0) 
	= 
	\lim_{i \to \infty} 
	\eta (\mu_i) 
	\le 
	\varepsilon
$$
and we derive a contradiction once more.
Thus, one can claim that $\xi \in (r_1, r_2)$.
In so doing, the relations
${\eta (\xi)} \le \varepsilon$ and ${\eta (\xi + 0)} \ge \varepsilon$
follow directly from the definition of the sequences 
$\{ \mu_i \}_{i=1}^\infty$ and $\{ m_i \}_{i=1}^\infty$.

The proof is completed.
\end{proof}

\begin{lemma}\label{l4.10}
In the conditions of Lemma~$\ref{l4.8}$, let the inequality
\begin{equation}
	(M (r_2; v) - M (r_1; v))^{\alpha - p + 1}
	(r_2 - r_1)^{p - \alpha}
	\esssup_{
		B_{r_1, r_2}
	}
	b
	\ge
	\lambda
	\label{l4.10.1}
\end{equation}
be valid instead of~\eqref{l4.8.2} for some real number $\lambda > 0$.
Then
\begin{equation}
	(M (r_2; v) - M (r_1; v))^\alpha
	\esssup_{
		B_{r_1, r_2}
	}
	b
	\ge
	C
	(r_2 - r_1)^\alpha
	h,
	\label{l4.10.2}
\end{equation}
where the constant $C > 0$ depends only on 
$n$, $p$, $\alpha$, $\gamma$, $\lambda$, $C_1$, and $C_2$.
\end{lemma}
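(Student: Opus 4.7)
The plan is to adapt the argument of Lemma~\ref{l4.8} to the hypothesis~\eqref{l4.10.1}, which reverses~\eqref{l4.8.2}. I split into two cases according to whether~\eqref{l4.8.2} happens to hold.

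If~\eqref{l4.8.2} is satisfied, Lemma~\ref{l4.8} applies directly and gives $(M(r_2;v) - M(r_1;v))^{p-1} \ge C(r_2-r_1)^p h$. Multiplying this inequality by the assumption~\eqref{l4.10.1} immediately yields~\eqref{l4.10.2} with constant proportional to $\lambda$.

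If~\eqref{l4.8.2} fails, I imitate Lemma~\ref{l4.8}'s proof at a smaller scale. Let $r_* = (r_1+r_2)/2$ and pick $z \in S_{r_*}$ with $\lim_{\xi \to +0}\esssup_{B_\xi^z} v \ge M(r_*;v)$; following Lemma~\ref{l4.8}'s case split, introduce the truncated function $\tilde v$ built from $v$. Choose the radius $r \in (0, (r_2-r_1)/2]$ to be the largest value for which condition~\eqref{l4.4.2} of Lemma~\ref{l4.7} holds for $\tilde v$ on $B_r^z$; this determines $r$ implicitly through the relation $r^{p-\alpha}(M(r_2;v) - M(r_1;v))^{\alpha-p+1}\esssup b = c(\gamma)$. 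Applying Lemma~\ref{l4.7} on $B_r^z$ yields $(M(r_2;v) - M(r_1;v))^{p-1} \ge C r^p h$; substituting the defining relation converts this into $(M(r_2;v) - M(r_1;v))^\alpha \esssup b \ge C r^\alpha h$, and hypothesis~\eqref{l4.10.1} is then used to bridge the gap between $r^\alpha$ and $(r_2-r_1)^\alpha$.

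The main obstacle lies in this last step of the second case. Since~\eqref{l4.10.1} only bounds $r$ from above in terms of $(r_2 - r_1)$, in general $r < (r_2 - r_1)/2$, and the intermediate estimate contains $r^\alpha$ rather than the desired $(r_2 - r_1)^\alpha$. To close the gap, I would combine the local estimate with a pigeonhole/subdivision argument on $[r_1, r_2]$ in the spirit of Lemmas~\ref{l3.3}--\ref{l3.4}: find a sub-interval $[r_1', r_2']$ on which~\eqref{l4.8.2} holds for a comparable fraction of $M(r_2;v) - M(r_1;v)$, apply Case~1 to that sub-interval (noting the $\gamma$-ratio is preserved because $M(r_1'; v) \ge M(r_1; v) \ge \gamma M(r_2;v) \ge \gamma M(r_2'; v)$), and then piece the resulting local estimates together using the monotonicity of $M(\cdot; v)$, with the $\lambda$-dependence of the final constant arising from this pigeonhole step.
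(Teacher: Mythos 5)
Your Case 1 is fine and is exactly the multiplication trick the paper uses: the conclusion of Lemma~\ref{l4.8} times~\eqref{l4.10.1} gives~\eqref{l4.10.2} with a constant proportional to $\lambda$. The problem is Case 2, where you have correctly located the obstacle but not overcome it, and the sketch you offer for overcoming it does not work as stated. First, there is no reason a \emph{single} sub-interval $[r_1',r_2']$ on which~\eqref{l4.8.2} holds should capture a fixed fraction of $M(r_2;v)-M(r_1;v)$ or of $r_2-r_1$; in general one needs arbitrarily many small pieces. Second, ``applying Case 1 to a sub-interval'' requires not only~\eqref{l4.8.2} but also a lower bound of type~\eqref{l4.10.1} on that sub-interval, and the hypothesis~\eqref{l4.10.1} on $[r_1,r_2]$ does not transfer to arbitrary sub-intervals. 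Third, even granted local estimates on many pieces, you must put them in a form that is additive over a partition; the raw conclusion of Lemma~\ref{l4.8}, being $(p-1)$-homogeneous in the $M$-increment and $p$-homogeneous in the length, does not sum.

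The paper settles all three points with one construction. Put $\mu=\esssup_{B_{r_1,r_2}}b$ and $\varepsilon=\min\{\lambda, C_1/4\}$, and build (via the bisection Lemma~\ref{l4.9}) points $\xi_1=r_1<\xi_2<\dots$ chosen \emph{maximally}, so that on each piece one has $(M(\xi_{i+1};v)-M(\xi_i+0;v))^{\alpha-p+1}(\xi_{i+1}-\xi_i)^{p-\alpha}\mu\le\varepsilon$ (hence Lemma~\ref{l4.8} applies there) and simultaneously $(M(\xi_{i+1}+0;v)-M(\xi_i+0;v))^{\alpha-p+1}(\xi_{i+1}-\xi_i)^{p-\alpha}\mu\ge\varepsilon$ (the reverse inequality is saturated at the fixed threshold $\varepsilon$, which is where $\lambda$ enters the constant). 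Multiplying the two estimates on each piece and taking $\alpha$-th roots yields the one-homogeneous inequality $(M(\xi_{i+1}+0;v)-M(\xi_i+0;v))\,\mu^{1/\alpha}\ge c\,(\xi_{i+1}-\xi_i)\,h^{1/\alpha}$, which telescopes over $i$; the saturation also bounds each increment of $M$ from below, which is what proves the construction terminates after finitely many steps. A final application of Lemma~\ref{l4.8} on the terminal interval $[r_*,r_2]$, where $r_*$ is the first point at which the threshold $\varepsilon$ is reached, completes the covering of $[r_1,r_2]$. None of this mechanism is present in your proposal (and your detour through Lemma~\ref{l4.7} with an implicitly defined small radius is then unnecessary), so as written the proof has a genuine gap precisely at the step you yourself flag as the main obstacle.
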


\begin{proof}
It can be assumed that ${M (\cdot; v)}$ is a strictly increasing function on $[r_1, r_2]$;
otherwise $h = 0$ by Lemma~\ref{l4.0} and~\eqref{l4.10.2} is evident.
We denote 
$$
	\psi (r)
	=
	(M (r_2; v) - M (r; v))^{\alpha - p + 1}
	(r_2 - r)^{p - \alpha}
	\mu,
$$
where
$$
	\mu
	=
	\esssup_{
		B_{r_1, r_2}
	}
	b.
$$
Also let 
$\varepsilon = \min \{ \lambda, C_1 / 4 \}$ 
and
$r_* = \inf \{ r \in (r_1, r_2) : \psi (r) \le \varepsilon \}$.

By Lemma~\ref{l4.8}, we have
$$
	(M (r_2; v) - M (r_* + 0; v))^{p - 1}
	\ge
	c
	(r_2 - r_*)^p
	h.
$$
Multiplying this by the inequality
$$
	(M (r_2; v) - M (r_*; v))^{\alpha - p + 1}
	(r_2 - r_*)^{p - \alpha}
	\mu
	\ge
	\varepsilon
$$
which follows from~\eqref{l4.10.1} if $r_* = r_1$ or from the relation
${\psi (r_*)} = {\psi (r_* - 0)}$ if $r_* \in (r_1, r_2)$, we obtain
\begin{equation}
	(M (r_2; v) - M (r_*; v))
	\mu^{1 / \alpha}
	\ge
	c
	(r_2 - r_*)
	h^{1 / \alpha}.
	\label{pl4.10.2}
\end{equation}
In the case of $r_* = r_1$, we complete the proof.
Thus, it can be assumed that $r_* > r_1$. 
Le us construct a finite sequence of real numbers $\{ \xi_i \}_{i=1}^k$.
We put $\xi_1 = r_1$. Assume further that $\xi_i$ is already known. If
$$
	(M (r_2; v) - M (\xi_i + 0; v))^{\alpha - p + 1}
	(r_2 - \xi_i)^{p - \alpha}
	\mu
	\le
	\varepsilon,
$$
then we put $k = i$ and stop. Otherwise we take $\xi_{i+1} \in (\xi_i, r_2)$ such that
$$
	(M (\xi_{i+1}; v) - M (\xi_i + 0; v))^{\alpha - p + 1}
	(\xi_{i+1} - \xi_i)^{p - \alpha}
	\mu
	\le
	\varepsilon
$$
and
\begin{equation}
	(M (\xi_{i+1} + 0; v) - M (\xi_i + 0; v))^{\alpha - p + 1}
	(\xi_{i+1} - \xi_i)^{p - \alpha}
	\mu
	\ge
	\varepsilon.
	\label{pl4.10.4}
\end{equation}
By Lemma~\ref{l4.9}, such a real number $\xi_{i+1}$ obviously exists.
The above procedure must terminate at a finite step; 
otherwise, according to~\eqref{pl4.10.4}, we have
\begin{align*}
	M (\xi_{i+1} + 0; v) - M (\xi_i + 0; v)
	&
	\ge
	\left(
		\frac{
			\varepsilon
		}{	
			\mu
			(\xi_{i+1} - \xi_i)^{p - \alpha}
		}
	\right)^{1 / (\alpha - p + 1)}
	\\
	&
	\ge
	\left(
		\frac{
			\varepsilon
		}{	
			\mu
			(r_2 - r_1)^{p - \alpha}
		}
	\right)^{1 / (\alpha - p + 1)},
	\quad
	i = 1,2,\ldots,
\end{align*}
whence it follows that
$$
	M (r_2; v) - M (r_1; v)
	\ge
	\sum_{i=1}^\infty
	(M (\xi_{i+1} + 0; v) - M (\xi_i + 0; v))
	=
	\infty.
$$
By Lemma~\ref{l4.8}, 
$$
	(M (\xi_{i+1}; v) - M (\xi_i + 0; v))^{p - 1}
	\ge
	c
	(\xi_{i+1} - \xi_i)^p
	h,
	\quad
	i = 1, \ldots, k - 1.
$$
Multiplying this by~\eqref{pl4.10.4}, we obtain
$$
	(M (\xi_{i+1} + 0; v) - M (\xi_i + 0; v))
	\mu^{1 / \alpha}
	\ge
	c
	(\xi_{i+1} - \xi_i)
	h^{1 / \alpha},
	\quad
	i = 1, \ldots, k - 1.
$$
Therefore,
$$
	(M (\xi_k + 0; v) - M (r_1 + 0; v))
	\mu^{1 / \alpha}
	\ge
	c
	(\xi_k - r_1)
	h^{1 / \alpha}.
$$
Since $\xi_k \ge r_*$, the last estimate and~\eqref{pl4.10.2} allow us to assert that
$$
	(M (r_2; v) - M (r_1 + 0; v))
	\mu^{1 / \alpha}
	\ge
	c
	(r_2 - r_1)
	h^{1 / \alpha}.
$$

The proof is completed.
\end{proof}

\begin{proof}[Proof of Lemmas~$\ref{l3.0}$--$\ref{l3.2}$]
Corollary~\ref{c4.2} implies that ${M (\cdot; u)}$ is a non-decreasing function 
on $[r_1, r_2]$. 
We put
$
	\omega
	=
	\{ 
		x \in \Omega_{r_1, r_2} 
		: 
		u (x) > \beta M (r_1; u)
	\},
$
$$
	v (x)
	=
	\left\{
		\begin{array}{ll}
			u (x) - \beta M (r_1; u),
			&
			x \in \omega,
			\\
			0,
			&
			x \in \Omega_{r_1, r_2} \setminus \omega
		\end{array}
	\right.
$$
and
$$
	h
	=
	\essinf_{
		\Omega_{r_1, r_2}
	}
	q
	\inf_{
		(\beta M (r_1; u), M (r_2; u))
	}
	g.
$$
In so doing, for Lemma~\ref{l3.0}, we take $\beta = 1 / 2$.
By Lemma~\ref{l4.1}, $v$ is a non-negative solution of inequality~\eqref{l4.0.1}.
Thus, to complete the proof, it remains to apply 
Lemmas~\ref{l4.0}, \ref{l4.8}, and~\ref{l4.10}, respectively.
\end{proof}

\end{document}